\documentclass[pdflatex]{sn-jnl}
\usepackage{mathtools}
\usepackage[numbers]{natbib}
\usepackage{tikz}
\usepackage{hyperref}
\usepackage{graphicx}%
\usepackage{multirow}%
\usepackage{amsmath,amssymb,amsfonts}%
\usepackage{amsthm}%
\usepackage{mathrsfs}%
\usepackage[title]{appendix}%
\usepackage{xcolor}%
\usepackage{textcomp}%
\usepackage{manyfoot}%
\usepackage{booktabs}%
\usepackage{algorithm}%
\usepackage{algorithmicx}%
\usepackage{algpseudocode}%
\usepackage{listings}%
\usepackage{mathrsfs}
\usepackage{xcolor}
\newtheorem{theorem}{Theorem}%
\newtheorem{proposition}{Proposition}%
\newtheorem{example}{Example}%
\newtheorem{remark}{Remark}%
\newtheorem{definition}{Definition}%
\newtheorem{corollary}{Corollary}

\definecolor{darkblue}{RGB}{0,0,139}
\begin{document}

\title[Article Title]{Cramér–Rao Type Bounds for Generalized Exponential Families
}
\begingroup
\renewcommand{\thefootnote}{}
\endgroup

\author*[1]{\fnm{Satyajit} \sur{Dhadumia}}\email{satya22dhadumia@zohomail.in}
\author*[2]{\fnm{M. Ashok Kumar} \sur{}}\email{ ashokm@iitpkd.ac.in}
\affil*[1,2]{\orgdiv{Department of Mathematics}, \orgname{IIT Palakkad},\postcode{ 678623}, \state{Kerala}, \country{India}}
\abstract{
It is well known that the sufficient statistics of an exponential family attain the usual Cramér–Rao Lower Bound (CRLB) for their expected value. In this paper, we extend this result to a broader family of distributions that includes exponential families and power-law families, with Student $t$-distributions as special case. To derive the generalized Cramér–Rao inequality, we first apply Eguchi's theory to Bregman divergence to obtain a Riemannian metric and then follow the approach of Amari and Nagaoka. Since this approach applies only to models with finite support, we use Cauchy–Schwarz inequality to extend it to models with countably infinite or continuous support. The resulting generalized CRLB provides a finite-sample lower bound related to the asymptotic covariance matrix of the Minimum Bregman Divergence Estimator (MBDE). We show that the sufficient statistic of the generalized family attains the generalized CRLB. Moreover, although the generalized CRLB does not, in general, coincide with the asymptotic covariance matrix of the MBDE, the covariance matrix of an estimator attaining the generalized CRLB coincides with the corresponding matrix for the joint distribution of the sample.}

\keywords{Bregman Divergence,
Fisher Information Matrix,
Generalized Cramér-Rao Bound,
Riemannian Metric,
Sufficient Statistic}

\maketitle

\section{Introduction}
Let $(\mathcal{X},\mu)$ be a measurable space, and let $\mathcal{P}$ denote the set of all probability densities with respect to $\mu$. Throughout, $\mu$ is the counting measure when $\mathcal{X}$ is finite or countably infinite and $\mu$ is the n-dimensional Lebesgue measure when $\mathcal{X}\subseteq \mathbb{R}^n$ is a measurable set with positive Lebesgue measure.

Let $S \subseteq \mathcal{P}$ be a family of probability distributions parameterized by $\theta=[\theta_1,\cdots,\theta_k]^{\top}$, where $k\geq1$, such that
\begin{equation}\label{eqn:statistical model}
    S=\{p_{\theta}:\theta=[\theta_1,\cdots,\theta_k]^{\top}\in \Theta\},
\end{equation}
where $\Theta$ is an open subset of $\mathbb{R}^k$, called the parameter space. Let us also suppose that $\operatorname{Supp}(p_{\theta}) = \mathcal{X}$ for all $\theta\in\Theta$ and the mapping $\theta\mapsto p_{\theta}$ is injective. For example, consider the $n$-dimensional multivariate normal family
\begin{equation}
\label{eqn:multivariate_normal}
p_{\mu,\sigma^2}(\mathrm{x})
=
\frac{1}{(2\pi\sigma^2)^{n/2}}
\exp\left\{
-\frac{1}{2\sigma^2}
\sum_{i=1}^n(x_i-\mu)^2
\right\},\quad\mathrm{x}=[x_1,\ldots,x_n]^{\top}\in\mathbb{R}^n,
\end{equation}
where $\mu\in\mathbb{R}$ and
$\sigma^2>0$. It is a parametric family of probability distributions with parameter space $\Theta = \mathbb{R}\times (0,\infty)$. Also, it is well-known that multivariate normal forms an exponential family.

\begin{definition}[Exponential family]
A family of probability distributions $\mathcal{E} = \{p_{\theta}:\theta\in\Theta\}$, where $\Theta$ is an open subset of $\mathbb{R}^k$, is said to form an exponential family if each $p_{\theta}\in \mathcal{E}$ can be written as 
\begin{equation}
\label{eqn:exponential_family}
p_{\theta}(\mathrm{x}) =
\exp [C(\mathrm{x}) + Z(\theta)+\lambda(\theta)^{\top}T(\mathrm{x})]\qquad
\text{for }\mathrm{x}\in\mathcal{X},
\end{equation}
for some $C:\mathcal{X}\to\mathbb{R}$ and $Z\in C^1(\Theta)$, and $\lambda=[\lambda_1,\ldots,\lambda_k]^{\top}$, $T=[T_1,\ldots,T_k]^{\top}$, where $\lambda_i\in C^1(\Theta)$ and $T_i:\mathcal{X}\to \mathbb{R}$, $i=1,\ldots,k$ \cite[(3b.7.2)]{rao1973linear}.
\end{definition}
For the multivariate normal in \eqref{eqn:multivariate_normal}, 
\begin{align*}
  C(\mathrm{x})=0,~ T(\mathrm{x}) = \left[\frac{1}{n}\sum_{i=1}^n x_i, \frac{1}{n}\sum_{i=1}^n x_i^2\right]^{\top},~ \lambda(\theta) = \left[\frac{n\mu}{\sigma^2}, -\frac{n}{2\sigma^2}\right]^{\top},
\end{align*} and $$Z(\theta) = -\frac{n\mu^2}{2\sigma^2} -\frac{n}{2}\log(2\pi\sigma^2).$$
It is well-known that, if $\mathrm{X}=(X_1,\ldots,X_n)$ is distributed according to $p_\theta\in\mathcal{E}$, by Factorization Theorem \cite[Th.~6.2.6]{casella2024statistical}, $T(\mathrm{X})$ is a sufficient statistic for the parameter $\theta$. Suppose $r(\theta) = \mathbb{E}_{\theta}[T(\mathrm{X})]$. An interesting property of $T(\mathrm{X})$ is that its covariance matrix attains the lower bound in the Cramér-Rao inequality for the parameter $r(\theta)$ \cite[Eq.~(3.13)]{crrao1945}. Cramér-Rao inequality states that, if $\hat{r}(\mathrm{X})$ is an unbiased estimator of $r(\theta)=[r_1(\theta),\ldots,r_k(\theta)]^{\top}$ (under regularity conditions),
\begin{equation}
    \label{eqn:CRLB}
    V_{\theta}[\hat{r}(\mathrm{X})]\succeq J(\theta)[G_n(\theta)]^{-1}J(\theta)^{\top},
\end{equation}
where $G_n(\theta)=\mathbb{E}_{\theta}[u_{\theta}(\mathrm{X})u_{\theta}(\mathrm{X})^{\top}]$ is the Fisher information matrix, $u_{\theta}(x)={\partial}/{\partial{\theta}}\log p_{\theta}(x)$ is the score function  and $J(\theta)$ is the $k \times k$ Jacobian of $r(\theta)$ \cite[Eq.~(3.5)]{crrao1945}, \cite[Th.~7.3.9]{casella2024statistical}\\(Note: Here and throughout, for two matrices $A_1$ and $A_2$, we write $A_1\succeq A_2$ to indicate that the matrix $A_1-A_2$ is positive semi-definite).

Let us now suppose that $\mathrm{X}=(X_1,\ldots,X_n)$ is distributed according to the multivariate Student $t$-distribution $t_{n,\nu}(\mu\mathbf{1}_n,\sigma^2\mathbb{I}_n)$, $\mu\in\mathbb{R}$, $\sigma^2\in (0,\infty)$, with $\nu>4$, where
    \begin{equation}
        \label{eqn:mult_t_dist}
       p_{\mu,\sigma^2}(\mathrm{x})= {\displaystyle {N_{\nu,\sigma^2}}\left[1+\frac{1}{\nu\sigma^2}\sum_{i=1}^n(x_i-\mu)^2\right]^{-(\nu +n)/2}},\quad \mathrm{x}=[x_1,\ldots,x_n]^{\top}\in\mathbb{R}^n,
    \end{equation} where
$$N_{\nu,\sigma^2}=\frac {\Gamma \left[(\nu +n)/2\right]}{\Gamma (\nu /2)\nu ^{n/2} \pi^{n/2}{\sigma}^{n}}.$$ 
The multivariate Student $t$-distribution provides a robust alternative to the multivariate normal distribution for modeling multivariate continuous data with heavy tails or outliers. As $\nu\to\infty$, the multivariate $t$-distribution coincides with the multivariate normal distribution; that is, \eqref{eqn:mult_t_dist} converges to \eqref{eqn:multivariate_normal}. Note that
$$
T(\mathrm{x})=
\left[
\frac{1}{n}\sum_{i=1}^n x_i,\,
\frac{1}{n}\sum_{i=1}^n x_i^2
\right]^{\top}
$$
is a sufficient statistic for $[\mu,\sigma^2]^{\top}$ in the multivariate Student $t$-distribution. \emph{It is therefore now reasonable to ask if this $T(\mathrm{X})$ also satisfies a CRLB}. Indeed, we formulate a more general question. Notice that the Student distribution does not belong to the exponential family but belongs to the more general family of probability distributions called the $\mathbb{B}^{(\alpha)}$-family which we define now.
\begin{definition}[$\mathbb{B}^{(\alpha)}$-family]
Let $\alpha\in\mathbb{R}\setminus\{0\}$. A family of probability distributions $S=\{p_{\theta}:\theta\in\Theta\}$, where $\Theta$ is an open subset of $\mathbb{R}^k$, is said to form a $\mathbb{B}^{(\alpha)}$-family if each $p_{\theta}\in S$ can be represented as
\begin{equation}\label{eqn:alpha_family}
p_{\theta}(\mathrm{x})=\bigl[N(\mathrm{x})+F(\theta)+\omega(\theta)^{\top}T(\mathrm{x})\bigr]^{\frac{1}{\alpha}}\quad \text{for }\mathrm{x}\in\mathcal{X},
\end{equation}for some $N:\mathcal{X}\to\mathbb{R}$ and $F\in C^1(\Theta)$, and $\omega=[\omega_1,\ldots,\omega_k]^{\top}$, $T=[T_1,\ldots,T_k]^{\top}$, where $\omega_i\in C^1(\Theta)$ and $T_i:\mathcal{X}\to \mathbb{R}$, $i=1,\ldots,k$ \cite{gayen2021projection,gayen2023generalized}. 
\end{definition}

 For the multivariate Student $t$-distribution in \eqref{eqn:mult_t_dist},
\begin{align}\label{Student_t_alpha_family}
\alpha&=-\frac{2}{\nu+n},\hspace{2.5cm}
N(\mathrm{x}) = 0, 
 &T(\mathrm{x})= \left[\frac{1}{n}\sum_{i=1}^n x_i,\,
\frac{1}{n}\sum_{i=1}^n x_i^2\right]^\top,\nonumber \\
\omega(\theta) &= \left[
-\frac{2n\mu}{\nu\sigma^2}N_{\nu,\sigma^2}^{-\frac{2}{\nu+n}},\,
\frac{n}{\nu\sigma^2} N_{\nu,\sigma^2}^{-\frac{2}{\nu+n}}
\right]^\top, 
&F(\theta) = N_{\nu,\sigma^2}^{-\frac{2}{\nu+n}}
+ \frac{n\mu^2}{\nu\sigma^2} N_{\nu,\sigma^2}^{-\frac{2}{\nu+n}}.
\end{align}

As $\alpha\to0$, $\mathbb{B}^{(\alpha)}$-family coincides with the exponential family \eqref{eqn:exponential_family} \cite[Remark~5]{gayen2021projection}. In fact, \(\mathbb{B}^{(\alpha)}\)-family is a special case of the more general \(\mathbb{B}^{(\varphi)}\)-family, which we define below.
\begin{definition}[$\mathbb{B}^{(\varphi)}$-family]\label{Def:phi_family}
 Let $\varphi$ be a strictly convex twice differentiable function on $(0,\infty)$, and $\varphi'$ denotes its derivative with respect to its argument. A family of probability distributions $S = \big\{p_{\theta}: \theta\in \Theta\big\}$, where $\Theta$ is an open subset of $\mathbb{R}^k$, is said to form a $\mathbb{B}^{(\varphi)}$-family if each $p_{\theta}$ can be represented as
    \begin{equation}\label{eqn:phi_family}
  p_{\theta}(\mathrm{x})
   =(\varphi')^{-1}\!\left[C(\mathrm{x}) + Z(\theta) + \lambda(\theta)^\top T(\mathrm{x})\right]\quad  \text{for }\mathrm{x}\in\mathcal{X}
    \end{equation} 
   for some functions $C:\mathcal{X}\to\mathbb{R}$ and $Z\in C^1(\Theta)$, and 
$
\lambda=[\lambda_1,\ldots,\lambda_k]^\top,\;
T=[T_1,\ldots,T_k]^{\top},
$ where $\lambda_i\in C^1(\Theta)$ and $T_i:\mathcal{X}\to\mathbb{R}$, $i=1,\ldots,k$.
\end{definition}

\begin{remark}
   Note that $\mathbb{B}^{(\varphi)}$-family is a special case of the $(h,\tau)$-exponential family defined in \cite[Eq.~(4.1)]{MATSUZOE2027131088} with $h(t)=\varphi(t)$, $\tau(t)=t$, $\theta(p)=\lambda(\theta)$, $C(x)=-C(\mathrm{x})$, $\psi(p)=-Z(\theta)$.
\end{remark}
\begin{remark}
Note that, upon choosing 
\begin{equation}
\label{eqn:varphi_function}
\varphi(t)=\varphi_{\alpha}(t)=
\begin{cases}
\frac{t^{1+\alpha}-t+\alpha(1-t)}
{\alpha(1+\alpha)},
&\text{for }\alpha\in\mathbb{R}\setminus\{0,-1\},\\[8pt]
t-\log t-1,
&\text{for }\alpha=-1,
\end{cases}
\end{equation}
with $C(\mathrm{x})=\frac{N(\mathrm{x})-1}{\alpha}$, $Z(\theta)=\frac{F(\theta)}{\alpha}$, $\lambda(\theta)=\frac{\omega(\theta)}{\alpha}$, the $B^{(\varphi)}$-family in \eqref{eqn:phi_family} specializes to the $\mathbb{B}^{(\alpha)}$-family \eqref{eqn:alpha_family}. 
Notice also that the above choice brings \eqref{eqn:alpha_family} to the following form 
\begin{equation}
    p_{\theta}(\mathrm{x})
   =\left[1+\alpha\left(C(\mathrm{x}) + Z(\theta) + \lambda(\theta)^\top T(\mathrm{x})\right)\right]^{\frac{1}{\alpha}}\quad  \text{for }\mathrm{x}\in\mathcal{X}.
\end{equation}
It is now therefore easy to see that \eqref{eqn:alpha_family} coincides with the exponential family as $\alpha\to 0$.
\end{remark}
\begin{remark}
     The exponential family defined in \eqref{eqn:exponential_family} is a $\mathbb{B}^{(\varphi)}$-family with $\varphi(t) = t \log t-t+1$.
\end{remark}
\begin{remark}\label{rem:phi_family_phi_escort_relation}
Note that the $\phi$-exponential family of Jan Naudts \cite{naudts2004estimators} is a special case of the $\mathbb{B}^{(\varphi)}$-family with $\varphi(t)=F_{\phi}(t)-(1-t)F_{\phi}(0)$, $t\in (0,\infty)$, where $F_{\phi}(t)=\int_{1}^t\log_{\phi}(y)dy$ with $ \log_{\phi}(t)\coloneqq\int_{1}^t \frac{1}{\phi(y)}dy$, $C(\mathrm{x})=0$, $Z(\theta)=G(\theta)+F_{\phi}(0)$, $\lambda(\theta)=-\theta^k$, $T(\mathrm{x})=c_k(x)$ (see Appendix A).
\end{remark}
Observe that with $C(\mathrm{x})=C$ for all $\mathrm{x}\in\mathcal{X}$ in $\mathbb{B}^{(\varphi)}$-family \eqref{eqn:phi_family}, by Factorization Theorem, $T (\mathrm{X})$ is a sufficient statistic for $\theta$. 
\emph{It is therefore reasonable to ask whether there exists a finite-sample CRLB, as in the case of the exponential family, that is attained by $T(\mathrm{X})$}. In order to address this question, we return to the motivation of the usual CRLB in the following:

R. A. Fisher established the asymptotic normality of the maximum likelihood estimator (MLE) $\hat{\theta}_{\mathrm{MLE}}$ \cite{fisher1922mathematical}, \cite[Th.~10.1.12]{casella2024statistical}. Specifically, for an i.i.d. sample $X_1,\ldots,X_n$ from $p_\theta$, under suitable regularity conditions,
\begin{equation}
\sqrt{n}\bigl(\hat{\theta}_{\mathrm{MLE}}-\theta\bigr)
\xrightarrow{d}
N\left(0,[G_1(\theta)]^{-1}\right),
\end{equation}
where
\begin{equation}
G_1(\theta)
\coloneqq
\mathbb{E}_{\theta}\left[
u_{\theta}(X)u_{\theta}(X)^{\top}
\right].\nonumber
\end{equation}
Thus, $[G_1(\theta)]^{-1}$ serves as a benchmark for the asymptotic covariance matrix of the MLE. The question of whether such a bound admits a finite-sample lower bound, the Cram\'er-Rao inequality \eqref{eqn:CRLB} was discovered, with the lower bound attained by the sufficient statistics of the exponential family. 

 The asymptotic normality of the minimum Bregman divergence estimator (MBDE), denoted by $\hat{\theta}_{\varphi}$, has been established in \cite{Roy2019DensityPower}, which states that, under certain regularity conditions,
\begin{equation}
\sqrt{n}\bigl(\hat{\theta}_{\varphi}-\theta\bigr)
\xrightarrow{d}
N\left(
0,[I_1^{(\varphi)}(\theta)]^{-1}
\right),
\label{eqn:asym_eff_MBDE}
\end{equation}
where
\begin{equation}\label{eqn:asym_eff_MBDE_FI}
[I_1^{(\varphi)}(\theta)]^{-1}
=[G_1^{(\varphi)}(\theta)]^{-1}
K_1^{(\varphi)}(\theta)
[G_1^{(\varphi)}(\theta)]^{-1},
\end{equation}
with \begin{equation*}
    G_1^{(\varphi)}(\theta)
\coloneqq
\mathbb{E}_{\theta}\left[
\frac{\partial}{\partial\theta}
\varphi'\big(p_{\theta}(X)\big)
u_{\theta}(X)^\top
\right],\quad
K_1^{(\varphi)}(\theta)
\coloneqq
V_{\theta}\left[
p_{\theta}(X)
\varphi''\big(p_{\theta}(X)\big)
u_{\theta}(X)
\right].
\end{equation*}$\hat{\theta}{\varphi}$ is defined as the maximizer of the following likelihood function:
\begin{equation}
    \label{Bregman likelihood function}
    n^{-1}\sum_{i=1}^n \varphi'\big(p_{\theta}(x_i)\big)-\int\Big[p_{\theta}(x)\varphi'\big(p_{\theta}(x)\big)-\varphi\big( p_{\theta}(x)\big)\Big]d\mu(x),
\end{equation}
where $\varphi$ is as in \eqref{eqn:phi_family}. The divergence function\footnote{Divergence is a non-negative function $D$ such that $D(p, q) = 0$ iff $p = q$.} associated with \eqref{Bregman likelihood function} is
\begin{equation}
\label{BregDiv}
B^{(\varphi)}(q,p_{\theta})
\coloneqq
\int_{\mathcal{X}}
\Big\{
\varphi(q(x))
-\varphi(p_{\theta}(x))
-\varphi'(p_{\theta}(x))
\big(q(x)-p_{\theta}(x)\big)
\Big\}
\,d\mu(x).
\end{equation}
With $\varphi(t)=t\log t-t+1$, \eqref{BregDiv} reduces to 
\begin{equation}
    D_{KL}(q,p_{\theta})\coloneqq\int_{\mathcal{X}} q(x)\log\frac{q(x)}{p_{\theta}(x)}d\mu(x),
\end{equation} known as the Kullback--Leibler (KL) divergence, also called the I-divergence or relative entropy. With $\varphi(t)=\varphi_{\alpha}(t)$ as in \eqref{eqn:varphi_function}, $B^{(\varphi)}$ reduces to
\begin{align}
\label{eqn:basu-divergence}
    B^{(\alpha)}(q,p_{\theta})=\int_{\mathcal{X}}\left\{ \frac{{p_{\theta}(x)}^{1+\alpha}}{1+\alpha}-\frac{q(x)p_{\theta}(x)^{\alpha}}{\alpha}+\frac{q(x)^{1+\alpha}}{\alpha (1+\alpha)}\right\}d\mu(x).
\end{align}\eqref{eqn:basu-divergence} is known as BHHJ (Basu--Harris--Hjort--Jones) divergence \cite{basu2011statistical,BasuHarris1998}. The divergence $B^{(\alpha)}$ has appeared under several names in the literature, including power pseudo-distance \cite{broniatowski2012decomposable,MR3013393}, density power divergence \cite{BasuHarris1998,kanamori2014scale,naudts2004estimators}, and $\beta$-divergence \cite{mihoko2002robust}.  The cases $\alpha=0$ and $\alpha=-1$ are obtained by taking the corresponding limits. For $\alpha=0$, the BHHJ divergence reduces to the KL divergence, with the generating function $\varphi(t)=t\log t-t+1$.
For $\alpha=-1$, the BHHJ divergence reduces to the reverse KL divergence:
\begin{equation}
\label{eqn:rev_KL_divergence}
D_{RKL}(q,p_{\theta})
\coloneqq
\int_{\mathcal{X}}
\left\{
\log\frac{p_{\theta}(x)}{q(x)}
+
\frac{q(x)}{p_{\theta}(x)}
-1
\right\}
\,d\mu(x),
\end{equation}
with $\varphi(t)=t-\log t-1$ \cite[Sec.~11.4]{basu2011statistical}. 
Another important special case is $\alpha=1$ for which $\varphi(t)=t^2/2$. The corresponding BHHJ divergence is
\begin{equation}
\label{eqn:L_2_distance}
L_2(q,p_{\theta})
\coloneqq
\frac{1}{2}
\int_{\mathcal{X}}
\left[q(x)-p_{\theta}(x)\right]^2
\,d\mu(x).
\end{equation}
Motivated by the asymptotic covariance matrix expression in \eqref{eqn:asym_eff_MBDE_FI}, and in analogy with the usual CRLB, we propose a generalized Cram\'er-Rao inequality for the covariance matrix of unbiased estimators of $r(\theta)$ with lower bound given by
\begin{equation}\label{eqn:variance_suff_stat_phi_family}
    J(\theta)[I_n^{(\varphi)}(\theta)]^{-1}J(\theta)^T,
\end{equation}
where
 \begin{equation}
[I_n^{(\varphi)}(\theta)]^{-1}
=
[G_n^{(\varphi)}(\theta)]^{-1}
K_n^{(\varphi)}(\theta)
[G_n^{(\varphi)}(\theta)]^{-1},
\end{equation}with
\begin{equation*}
    G_{n}^{(\varphi)}(\theta)
\coloneqq
\mathbb{E}_{\theta}\!\left[
\frac{\partial}{\partial\theta}
\varphi'\!\big(p_{\theta}(\mathrm{X})\big)
\,u_{\theta}(\mathrm{X})^\top
\right], \quad K_{n}^{(\varphi)}(\theta)
\coloneqq
V_{\theta}\!\left[
p_{\theta}(\mathrm{X})
\varphi''\!\big(p_{\theta}(\mathrm{X})\big)
u_{\theta}(\mathrm{X})
\right].
\end{equation*} Furthermore, this lower bound is attained by the sufficient statistic $T(\mathrm{X})$ of the $B^{(\varphi)}$-family.
We call $I_n^{(\varphi)}(\theta)$ the $\varphi$-information matrix.

The usual Cram\'er-Rao inequality \eqref{eqn:CRLB} can be derived as a direct application of Cauchy-Schwarz inequality. Amari and Nagaoka provided an information-geometric derivation of the usual CRLB \cite[Sec.~2.5]{AmariHiroshi2000} for models with finite support. We follow this approach and derive a generalized Cram\'er-Rao inequality whose lower bound ($\varphi$-CRLB) is related to the asymptotic covariance matrix of the MBDE. We then extend it to models with countably infinite or continuous support using Cauchy–Schwarz inequality. Although the $\varphi$-CRLB does not, in general, coincide with the asymptotic covariance matrix of the MBDE evaluated for the joint observation vector $(X_1,\ldots,X_n)$, the covariance matrix of an estimator attaining the $\varphi$-CRLB coincides with the latter. We also prove that the sufficient statistic of the $\mathbb{B}^{(\varphi)}$ family attains the $\varphi$-CRLB and as a consequence has covariance matrix equal to the asymptotic covariance matrix of the MBDE evaluated for $(X_1,\ldots,X_n)$. The rest of the paper is organized as follows. In Section~\ref{sec:phi-CRLB}, we establish the generalized Cram\'er-Rao inequality using an information-geometric approach and derive the $\alpha$-version of it for models with finite support. In Section~\ref{sec:cauchy-schwarz-proof}, we extend the generalized Cramér–Rao inequality to models with countably infinite or continuous support, derive a general equality condition, and show that equality holds if and only if the underlying family is a $B^{(\varphi)}$-family. In Section~\ref{sec:attainment}, we show that an estimator attaining the $\varphi$-CRLB has covariance matrix equal to the asymptotic covariance matrix of the MBDE evaluated for $(X_1,\ldots,X_n)$. In this section, we also show that the sufficient statistics of the $B^{(\varphi)}$-family attain the $\varphi$-CRLB. Finally, in Section~\ref{sec:concluding remarks}, we conclude with a summary and concluding remarks. Proofs of several auxiliary results are deferred to the Appendix.
\section{A Generalized Cram\'er-Rao Inequality from Bregman Divergence using Information Geometry}\label{sec:phi-CRLB}
We start this section by briefly introducing information geometry. In information geometry, a statistical model $S$ with $ k$ unknown parameters is regarded as a $k$-dimensional differentiable manifold, called a statistical manifold. A coordinate system on $S$ is given by a map
\[
\psi:S\to\Theta\subseteq\mathbb{R}^k,
\qquad
\psi(p_\theta)=\theta.
\]
For example, the family of Bernoulli distributions $S=\{\operatorname{Ber}(\theta):\theta \in (0,1)\}$ forms a $1$-dimensional statistical manifold, whereas the family of the multivariate normal in \eqref{eqn:multivariate_normal} forms a $2$-dimensional statistical manifold. The Fisher information matrix at $p_\theta\in S$ is the $k\times k$ matrix
\[
G_n(\theta)=[g_{ij}(\theta)]_{i,j=1}^k,
\]
where \begin{equation}\label{eqn:metric-component}
    g_{ij}(\theta)
\coloneqq
\mathbb{E}_{\theta}
\left[
\partial_i\log p_\theta(\mathrm{X})
\partial_j\log p_\theta(\mathrm{X})
\right],
\end{equation}
and $\partial_i\coloneqq\frac{\partial}{\partial\theta_i}$. Under the usual regularity and identifiability conditions, $G_n(\theta)$ is positive definite and therefore defines an inner product on the tangent space $T_{p_\theta}(S)$. More precisely, for
\[
X=\sum_{i=1}^k x^i\partial_i,
\qquad
Y=\sum_{j=1}^k y^j\partial_j,
\]where \(x^i,y^j\in\mathbb{R}\) for \(i,j=1,\ldots,k\),
in $T_{p_\theta}(S)$, the Fisher--Rao metric is given by
\[
g_{p_\theta}(X,Y)
=
\sum_{i,j=1}^k
g_{ij}(\theta)x^iy^j.
\]
This metric is commonly referred to as the Fisher information metric \cite{crrao1945,AmariHiroshi2000}.

The Fisher information metric can also be obtained directly from the KL divergence. Indeed,
\begin{equation}
\label{eqn:Fisher_from_KL}
g_{ij}(\theta)
=
-\left.\partial_i\partial'_j
D_{KL}(p_\theta,p_{\theta'})
\right|_{\theta'=\theta},
\end{equation}where $\partial'_j \coloneqq \frac{\partial}{\partial\theta'_j}$.
More generally, Eguchi's theory \cite[Sec.~2]{Eguchi1992} (cf.~\cite[Th.~2.6]{eguchi2022minimum}) shows that a sufficiently smooth divergence $D$ on a differentiable manifold induces a Riemannian metric. In local coordinates $\theta=[\theta_1,\ldots,\theta_k]^{\top}$, this metric is given by
\begin{equation}
\label{eqn:Eguchi_metric}
g_{ij}(\theta)
\coloneqq
-\left.
\partial_i\partial'_j
D(p_\theta,p_{\theta'})
\right|_{\theta'=\theta}.
\end{equation}
Based on the Fisher information metric, Amari and Nagaoka \cite[Sec.~2.5]{AmariHiroshi2000} re-derived the usual CRLB for statistical models with finite support. In this section, we extend this information-geometric approach from the KL divergence to general Bregman divergences and derive generalized Cramér--Rao bounds. Related information-geometric and generalized Cramér-Rao bounds have been studied in \cite{kumar2018information,kumar2020cram,AshokMishra2021,mishra2020generalized}.

In this section, we assume that $\mathcal{X}$ is a finite set. Then, the probability simplex $\mathcal{P}$ (strictly positive probability distributions) forms a $|\mathcal{X}|-1$ dimensional statistical manifold. Let $ \theta = [\theta_1, \dots, \theta_k]^{\top}$ be a coordinate system of $\mathcal{P}$. By using Eguchi's theory, we can define a Riemannian metric $g^{(\varphi)}(\theta)$, referred to as the $\varphi$-Fisher information metric on $\mathcal{P}$, derived from the Bregman divergence $B^{(\varphi)}$, characterized by the matrix $G_n^{(\varphi)}(\theta)=[g_{ij}^{(\varphi)}(\theta)]_{i,j=1}^k$ with
\begin{eqnarray}
\label{ij entry of phi fisher matrix}
 g^{(\varphi)}_{ij}(\theta) &\coloneqq& -B^{(\varphi)}(\partial_i|\partial_j)\nonumber\\
    &\coloneqq& -\partial_i\partial'_j B^{(\varphi)}(p_{\theta},p_{\theta'})\Big|_{\theta^{'}=\theta}\nonumber\\
     &=& \partial_i\partial_j'\Big\{\sum_{\mathrm{x}\in\mathcal{X}}\varphi'\big(p_{\theta'}(\mathrm{x})\big)\big(p_{\theta}(\mathrm{x})-p_{\theta'}(\mathrm{x})\big) \Big\}\Big|_{\theta'=\theta}\nonumber\\
     &=&\partial_i\sum_\mathrm{x}\partial_j'\varphi'\big(p_{\theta'}(\mathrm{x})\big)\big(p_{\theta}(\mathrm{x})-p_{\theta'}(\mathrm{x})\big)-\varphi'\big(p_{\theta'}(\mathrm{x})\big)\partial_j' p_{\theta'}(\mathrm{x})\Big|_{\theta'=\theta}\nonumber\\
     &=&\sum_\mathrm{x}\partial_j'\varphi'\big(p_{\theta'}(\mathrm{x})\big)\partial_ip_{\theta}(\mathrm{x})\Big|_{\theta'=\theta}\nonumber\\
     &=&\sum_\mathrm{x}\partial_j\varphi'\big(p_{\theta}(\mathrm{x})\big)\partial_i p_{\theta}(\mathrm{x})\nonumber\\
     &=&\mathbb{E}_{\theta}\left[\partial_i\log p_{\theta}(\mathrm{X})\partial_j\varphi'\big(p_{\theta}(\mathrm{X})\big)\right].
\end{eqnarray}
By following the same setting as in \cite[Sec.~2.5]{AmariHiroshi2000}, we have 
\begin{equation}
    T_{p}^{(e)}(\mathcal{P})\coloneqq\left\{X^{(e)}\mid X\in T_{p}(\mathcal{P})\right\}=\left\{A\in \mathbb{R}^{\mathcal{X}}\mid \mathbb{E}_{p}{[A]}=0\right\},
\end{equation}
where 
$X^{(e)}(\mathrm{x})\coloneqq X^{(m)}(\mathrm{x})/p(\mathrm{x}) \text{ for } \mathrm{x}\in \mathcal{X}$ with $X^{(m)}\in\mathcal{A}_0
\coloneqq\{A\in \mathbb{R}^{\mathcal{X}}\mid \sum_{\mathrm{x}} A(\mathrm{x})=0\}.$
For $p\in \mathcal{P}$, let 
\begin{equation}\label{phi-embedding}
    \gamma(p)=\varphi'(p).
\end{equation} $\gamma$ identifies $\mathcal{P}$ with $\gamma(\mathcal{P})=\{\varphi'(p)\mid p\in \mathcal{P}\}\subset \mathbb{R}^{\mathcal{X}}$.
Under this, we can define 
\begin{equation}
    X^{(\varphi)}(\mathrm{x})=p(\mathrm{x})\varphi''(p(\mathrm{X}))X^{(e)}(\mathrm{x})\quad \text{for }\mathrm{x}\in \mathcal{X},\nonumber
\end{equation}
with $X^{(e)}\in T_p^{(e)}(\mathcal{P})$, and call $X^{(\varphi)}$ the $\varphi$-representation of $X\in T_p(\mathcal{P})$. Let \begin{equation}
    T_p^{(\varphi)}(\mathcal{P})\coloneqq\big\{X^{(\varphi)}\mid X\in T_p(\mathcal{P})\big\}.\nonumber
\end{equation}
The following proposition gives another characterization for $T_p^{(\varphi)}(\mathcal{P})$.
\begin{proposition}
    \begin{equation}
        T_{p}^{(\varphi)}(\mathcal{P}) = \left\{A \in \mathbb{R}^{\mathcal{X}}\mid \mathbb{E}_{p}\bigg[\frac{A}{p(X)\varphi''(p(X))}\bigg] = 0\right\}.  
    \end{equation}
\end{proposition}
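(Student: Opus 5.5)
The plan is to prove the two inclusions directly from the definitions. The key facts are that $T_p^{(e)}(\mathcal{P})=\{A\in\mathbb{R}^{\mathcal{X}}\mid \mathbb{E}_p[A]=0\}$, stated just before, and that the map $X^{(e)}\mapsto X^{(\varphi)}=p\,\varphi''(p)\,X^{(e)}$ is pointwise multiplication by a nonvanishing function. Since $\varphi$ is strictly convex and twice differentiable on $(0,\infty)$, we have $\varphi''\ge 0$. Because $p\in\mathcal{P}$ is strictly positive on the finite set $\mathcal{X}$, the multiplier $p(\mathrm{x})\varphi''(p(\mathrm{x}))$ is nonzero provided $\varphi''(p(\mathrm{x}))>0$. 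So the multiplication map is a linear bijection of $\mathbb{R}^{\mathcal{X}}$, with inverse $A\mapsto A/(p\,\varphi''(p))$.

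For ``$\subseteq$'': take $A=X^{(\varphi)}\in T_p^{(\varphi)}(\mathcal{P})$, so that $A=p\varphi''(p)X^{(e)}$ with $X^{(e)}\in T_p^{(e)}(\mathcal{P})$. Then $A/(p\varphi''(p))=X^{(e)}$ pointwise. Hence $\mathbb{E}_p\big[A/(p(X)\varphi''(p(X)))\big]=\mathbb{E}_p[X^{(e)}]=0$ by the characterization of $T_p^{(e)}(\mathcal{P})$.

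For ``$\supseteq$'': take $A$ with $\mathbb{E}_p[A/(p\varphi''(p))]=0$ and set $B\coloneqq A/(p\varphi''(p))$. Then $\mathbb{E}_p[B]=0$, so $B\in T_p^{(e)}(\mathcal{P})$. Concretely, $B=X^{(e)}$ for the tangent vector $X$ with $X^{(m)}=pB$, and $X^{(m)}\in\mathcal{A}_0$ because $\sum_{\mathrm{x}}p(\mathrm{x})B(\mathrm{x})=\mathbb{E}_p[B]=0$. By definition, $X^{(\varphi)}=p\varphi''(p)B=A$, so $A\in T_p^{(\varphi)}(\mathcal{P})$.

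The only delicate point is the nonvanishing of $\varphi''$. Strict convexity alone gives only $\varphi''\ge0$, with isolated zeros possible, as for $t^4$. Since the paper divides by $p\varphi''(p)$ in the statement itself, I will state explicitly that $\varphi''>0$ on $(0,\infty)$ is assumed. This holds for all the examples: $t\log t - t+1$, $\varphi_\alpha$, and $t^2/2$. Under this assumption the argument is purely the bijection above; no other obstacle arises, since $\mathcal{X}$ is finite and all expectations are finite sums.
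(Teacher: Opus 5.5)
Your proof is correct and follows the paper's argument essentially verbatim: both inclusions rest on the fact that multiplication by $p\,\varphi''(p)$ carries $T_p^{(e)}(\mathcal{P})=\{B:\mathbb{E}_p[B]=0\}$ bijectively onto $T_p^{(\varphi)}(\mathcal{P})$, and your explicit choice $X^{(m)}=pB\in\mathcal{A}_0$ just spells out a step the paper leaves implicit. Stating the assumption $\varphi''>0$ explicitly is a fair addition, since the paper relies on it tacitly (for instance in the escort distribution), but your illustration should be something like $(t-\tfrac{1}{2})^4$ rather than $t^4$, because $\varphi''(t)=12t^2$ has no zero on $(0,\infty)$.
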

\begin{proof}
    Since $X^{(\varphi)}(\mathrm{x})=p(\mathrm{x})\varphi''(p(\mathrm{x}))X^{(e)}(\mathrm{x}),$ $\frac{X^{(\varphi)}(\mathrm{x})}{p(\mathrm{x})\varphi''(p(\mathrm{X}))}=X^{(e)}(\mathrm{x})$. Hence

$$\mathbb{E}_p\bigg[\frac{X^{(\varphi)}}{p(X)\varphi''\big(p(X)\big)}\bigg]=\mathbb{E}_p\big[X^{(e)}\big] = 0.
$$
On the other hand, if $A\in \mathbb{R}^{\mathcal{X}}$ such that $\mathbb{E}_{p}\Big[\frac{A}{p(X)\varphi''(p(X))}\Big] = 0$ then $\frac{A}{p\,\varphi''(p)}\in T_p^{(e)}(\mathcal{P})$. Hence $\frac{A}{p\,\varphi''(p)}=X^{(e)}$ for some $X\in T_{p}(\mathcal{P}).$ This implies $A(\mathrm{x})=p(\mathrm{x})\varphi''(p(\mathrm{x}))X^{(e)}(\mathrm{x})$ and hence $A\in T_p^{(\varphi)}(\mathcal{P})$.
\end{proof}
Now, for any $X,Y\in T_{p}(\mathcal{P})$, we define the $\varphi$-Fisher information metric at a point $p$ on $\mathcal{P}$ as
\begin{align}
\label{eqn:phi_riemannian_metric}
     g^{(\varphi)}(X,Y)\coloneqq\left\langle X,Y\right\rangle_{p}&\coloneq\sum_{\mathrm{x}} X^{(m)}(\mathrm{x})Y^{(\varphi)}(\mathrm{x})=\mathbb{E}_{p}\big[X^{(e)}Y^{(\varphi)}\big].
\end{align}
\begin{remark}
    For BHHJ divergence (that is, $\varphi(t)=\varphi_{\alpha}(t)$ as in \eqref{eqn:varphi_function}), we get the $\alpha$-Fisher information metric $g^{(\alpha)}(\theta)$, characterized by the matrix $G_n^{(\alpha)}(\theta) = [g_{ij}^{(\alpha)}(\theta)]_{i,j=1}^k$, where
\begin{eqnarray}
\label{eqn:alpha-Fisher-information}
    g^{(\alpha)}_{ij}(\theta) = \mathbb{E}_{\theta}\big[p_{\theta}(\mathrm{X})^{\alpha}\partial_i\log p_{\theta}(\mathrm{X}) \partial_j \log p_{\theta}(\mathrm{X})\big]
 \end{eqnarray} 
\cite{dhadumia2025generalized}. In this case, the embedding $\gamma:\mathcal{P}\to \gamma(\mathcal{P})$ in \eqref{phi-embedding} is given by 
\begin{equation}\label{eqn:gamma_embedding}
    \gamma(p) =
\begin{cases} 
     \dfrac{1}{\alpha}\big\{p^{\alpha}-1\big\}&\text{ for } \alpha \in \mathbb{R}\setminus \{0\} \\
      \log p&\text{ for }  \alpha = 0.   
\end{cases}
\end{equation}
Under this embedding, for two tangent vectors $X,Y\in T_p(\mathcal{P})$, one can define the $\alpha$-Fisher information metric at a point $p$ on $\mathcal{P}$ by
\begin{equation}
    g^{(\alpha)}(X,Y)\coloneqq\left\langle X,Y\right\rangle_{p}\coloneq\mathbb{E}_{p}[X^{(e)}Y^{(\alpha)}], 
\end{equation} where $Y^{(\alpha)}\in T_{p}^{(\alpha)}(\mathcal{P})\coloneq\{A\in \mathbb{R}^{\mathcal{X}}\mid \mathbb{E}_{p}[p(\mathrm{X})^{-\alpha}A]=0 \}$.
\end{remark}
Now, for an arbitrary smooth real-valued function $f$ defined on $\mathcal{P}$, the differential $(df)_{p} \in T^*_{p}(\mathcal{P})$ maps a tangent vector $X \in T_{p}(\mathcal{P})$ to $X(f)$. The gradient vector $(\mathrm{grad}\, f)_{p} \in T_{p}(\mathcal{P})$, associated with $(df)_{p}$, satisfies:
\begin{equation}
    \label{eqn:gradient-def}
\left\langle(\mathrm{grad}\, f)_{p}, X \right\rangle_{p} = X(f) = (df)_{p}(X),\quad
(\mathrm{grad}\, f)_{p} = \sum_{i,j} (g^{ij}(p))^{(\varphi)}\, (\partial_jf)_p\, \partial_i, \nonumber
\end{equation}
where $(g^{ij}(p))^{(\varphi)}$ denotes the $(i,j)^{\text{th}}$ entry of the inverse of the matrix $G_n^{(\varphi)}(p)=\left[\langle \partial_i,\partial_j\rangle_p\right]$
and the squared norm of the differential is given by
\begin{equation}
\label{eqn:differential 2-form}
\| (df)_{p} \|_{p}^2 = \| (\mathrm{grad}\, f)_{p} \|_{p}^2 = \sum_{i,j} (g^{ij}(p))^{(\varphi)}\, (\partial_jf)_p\, (\partial_if)_p.
\end{equation}

The following theorem relates the variance of a random variable $A$, computed with respect to the escort distribution
\begin{equation}
    \label{eqn:escort}
p_{\varphi}(\mathrm{x})\coloneqq \big[\varphi''(p(\mathrm{x}))\big]^{-1}\big/\sum_{\mathrm{y}\in \mathcal{X}}\big[\varphi''(p(\mathrm{y}))\big]^{-1}
\end{equation}
to the geometric quantity $\| (d\mathbb{E}[A])_p \|_p^2$.
\begin{theorem}\label{thm:var_sq_norm} 
     For a random variable $A$ (a function from $\mathcal{X}$ to $\mathbb{R}$), let $\mathbb{E}[A]$ denote the mapping $p\mapsto \mathbb{E}_{p}[A]$ on $\mathcal{P}$. Then
\begin{align*}
    V_{\varphi}[A]=\frac{\|(d\mathbb{E}[A])_{p}\|_{p}^{2}}{\sum_{y}\big[\varphi''\big(p(y)\big)\big]^{-1}},
\end{align*}
where the variance $V_{\varphi}[A]\coloneqq V_{p_{\varphi}}[A]\coloneq \mathbb{E}_{\varphi}[(A-\mathbb{E}_{\varphi}[A])^2]$ with $\mathbb{E}_{\varphi}[A]\coloneq \mathbb{E}_{p_{\varphi}}[A]$ and the norm $ \| \cdot \|_{p} $ is induced by the $\varphi$-Fisher information metric.
\end{theorem}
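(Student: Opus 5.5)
We mirror the argument of Amari and Nagaoka \cite[Sec.~2.5]{AmariHiroshi2000}: identify the gradient of $f=\mathbb{E}[A]$ explicitly in its $\varphi$-representation, then compute its squared norm directly. The key observation is that the differential of $f$ acts on a tangent vector $X$ through its mixture representation. Writing $p_t$ for a curve with tangent $X$ at $p$,
\[
X(f)=\sum_{\mathrm{x}} X^{(m)}(\mathrm{x})A(\mathrm{x}).
\]
Since $\sum_{\mathrm{x}}X^{(m)}(\mathrm{x})=0$, $A$ may be replaced by $A-c$ for any constant $c$. Comparing with \eqref{eqn:phi_riemannian_metric}, $\langle \mathrm{grad} f, X\rangle_p=\sum_{\mathrm{x}} X^{(m)}(\mathrm{x})(\mathrm{grad} f)^{(\varphi)}(\mathrm{x})$. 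So it suffices to find a constant $c$ for which $A-c$ lies in $T_p^{(\varphi)}(\mathcal{P})$. Then $(\mathrm{grad} f)^{(\varphi)}=A-c$, because $\sum_{\mathrm{x}}X^{(m)}B=0$ for all $X^{(m)}\in\mathcal{A}_0$ forces $B$ to be constant, and a constant lying in $T_p^{(\varphi)}$ must vanish.

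By the Proposition above, $A-c\in T_p^{(\varphi)}(\mathcal{P})$ if and only if
\[
\sum_{\mathrm{x}}\frac{A(\mathrm{x})-c}{\varphi''(p(\mathrm{x}))}=0.
\]
Solving gives $c=\mathbb{E}_{\varphi}[A]$, the mean under the escort distribution \eqref{eqn:escort}; this is where the escort distribution naturally appears. Therefore
\[
(\mathrm{grad} f)^{(\varphi)}=A-\mathbb{E}_{\varphi}[A],\qquad (\mathrm{grad} f)^{(e)}=\frac{A-\mathbb{E}_{\varphi}[A]}{p\,\varphi''(p)}.
\]

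We then evaluate the norm using \eqref{eqn:phi_riemannian_metric} and \eqref{eqn:differential 2-form}:
\[
\|(df)_p\|_p^2=\mathbb{E}_p\big[(\mathrm{grad} f)^{(e)}(\mathrm{grad} f)^{(\varphi)}\big]=\sum_{\mathrm{x}}\frac{(A(\mathrm{x})-\mathbb{E}_{\varphi}[A])^2}{\varphi''(p(\mathrm{x}))}.
\]
The last sum equals $V_{\varphi}[A]\sum_{\mathrm{y}}[\varphi''(p(\mathrm{y}))]^{-1}$, which is the claim. Strict convexity gives $\varphi''>0$, so all the divisions are legitimate. It is worth checking that this intrinsic norm agrees with the coordinate formula \eqref{eqn:differential 2-form}. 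That follows because $\partial_i\mapsto(\partial_i)^{(\varphi)}=\partial_i\varphi'(p_\theta)$ reproduces $g^{(\varphi)}_{ij}$ as computed in \eqref{ij entry of phi fisher matrix}, so the abstract inner product is the one defined by $G_n^{(\varphi)}$.

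The main obstacle is the identification of the gradient: the defining relation must be satisfied for every tangent vector, not just the coordinate ones. The argument above handles this by working in the full $(|\mathcal{X}|-1)$-dimensional manifold $\mathcal{P}$, where $X^{(m)}$ ranges over all of $\mathcal{A}_0$. That is also what makes the gradient unique modulo constants, and the constant is then pinned down by the escort-mean condition.
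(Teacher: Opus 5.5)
Your proposal is correct and follows the paper's proof in Appendix B essentially step for step. You write $X(\mathbb{E}[A])=\sum_{\mathrm{x}}X^{(m)}A$, shift $A$ by the escort mean so that $A-\mathbb{E}_{\varphi}[A]\in T_p^{(\varphi)}(\mathcal{P})$, identify this as the $\varphi$-representation of the gradient, and evaluate $\mathbb{E}_p[Y^{(e)}Y^{(\varphi)}]$. Your explicit uniqueness argument (a constant lying in $T_p^{(\varphi)}(\mathcal{P})$ must vanish) makes precise a step the paper leaves implicit. One small correction: strict convexity alone only gives $\varphi''\ge 0$, so $\varphi''>0$ is an implicit standing assumption, as in the paper, rather than a consequence.
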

    \begin{proof} See Appendix B.
\end{proof}
\begin{corollary}
  \label{corollarry 1}
 If $S$ is a submanifold of $\mathcal{P}$, then for $p\in S$ $$V_{\varphi}[A]\geq\frac{\|(d\mathbb{E}[A]\mid_{S})_{p}\|_{p}^{2}}{\sum_{y}\big[\varphi''\big(p(y)\big)\big]^{-1}},$$ with equality if and only if
$$A-\mathbb{E}_{\varphi}[A]\in T_{p}^{(\varphi)}(S).$$  
\end{corollary}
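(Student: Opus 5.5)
The approach mirrors the Amari–Nagaoka argument. Since $S\subseteq\mathcal{P}$, the tangent space $T_p(S)$ is a linear subspace of $T_p(\mathcal{P})$. Let $f=\mathbb{E}[A]$ on $\mathcal{P}$. The restricted differential $(d\mathbb{E}[A]\mid_S)_p$ is the restriction of $(df)_p$ to $T_p(S)$. Accordingly, $(\mathrm{grad}\,f\mid_S)_p$ is the orthogonal projection of $(\mathrm{grad}\,f)_p$ onto $T_p(S)$ with respect to $\langle\cdot,\cdot\rangle_p$. This holds because for every $X\in T_p(S)$ we have $\langle \mathrm{grad}\,f, X\rangle_p = X(f)$, which is also the defining relation of $\mathrm{grad}(f\mid_S)$ inside $T_p(S)$. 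Pythagoras then gives
\[
\|(df)_p\|_p^2=\|(d f\mid_S)_p\|_p^2+\|(\mathrm{grad}\,f)_p-(\mathrm{grad}\,f\mid_S)_p\|_p^2\ \ge\ \|(d f\mid_S)_p\|_p^2,
\]
with equality iff $(\mathrm{grad}\,f)_p\in T_p(S)$. Dividing by $\sum_y[\varphi''(p(y))]^{-1}>0$ and invoking Theorem~\ref{thm:var_sq_norm} yields the inequality.

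For the equality condition, we identify $(\mathrm{grad}\,f)_p$ explicitly in the $\varphi$-representation. We claim that its $\varphi$-representation is a positive multiple of $A-\mathbb{E}_\varphi[A]$, namely $c\,(A-\mathbb{E}_\varphi[A])$ with $c=\bigl(\sum_y[\varphi''(p(y))]^{-1}\bigr)^{-1}$. This is presumably the key computation inside the proof of Theorem~\ref{thm:var_sq_norm} in Appendix B, which I would reuse.

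To verify the claim, first note that $A-\mathbb{E}_\varphi[A]\in T_p^{(\varphi)}(\mathcal{P})$. By the Proposition this amounts to $\sum_x (A(x)-\mathbb{E}_\varphi[A])/\varphi''(p(x))=0$, which holds by the definition of the escort $p_\varphi$. Next, for any $X\in T_p(\mathcal{P})$ we have
\[
\langle X,Y\rangle_p=\sum_x X^{(m)}(x)\,Y^{(\varphi)}(x), \qquad X(f)=\sum_x X^{(m)}(x)A(x).
\]
Because $\sum_x X^{(m)}=0$, the constant $\mathbb{E}_\varphi[A]$ may be subtracted from $A$ without changing $X(f)$. Hence the vector $Y$ with $Y^{(\varphi)}=A-\mathbb{E}_\varphi[A]$ satisfies $\langle X,Y\rangle_p=X(f)$ for all $X$, so $Y=\mathrm{grad}\,f$. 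The normalizing constant in front is immaterial for membership in $T_p(S)$, since $T_p(S)$ is a linear subspace.

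Finally, the map $X\mapsto X^{(\varphi)}$ is a linear bijection from $T_p(\mathcal{P})$ onto $T_p^{(\varphi)}(\mathcal{P})$, and we define $T_p^{(\varphi)}(S)$ as the image of $T_p(S)$ under it. Therefore $(\mathrm{grad}\,f)_p\in T_p(S)$ iff $A-\mathbb{E}_\varphi[A]\in T_p^{(\varphi)}(S)$, which gives the stated equality condition.

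The main obstacle I expect is the justification that the gradient on $S$ equals the orthogonal projection. This requires that the metric on $S$ be the restriction of $g^{(\varphi)}$ on $\mathcal{P}$, i.e. that the Eguchi metric $G_n^{(\varphi)}$ computed in coordinates of $S$ agrees with \eqref{eqn:phi_riemannian_metric} evaluated on $\partial_i\in T_p(S)$. I would check this directly from \eqref{ij entry of phi fisher matrix}. Take $\partial_i^{(m)}=\partial_i p_\theta$ and $\partial_j^{(\varphi)}=p\,\varphi''(p)\,\partial_j\log p=\partial_j\varphi'(p_\theta)$. Then
\[
\langle \partial_i,\partial_j\rangle_p=\sum_x \partial_i p_\theta(x)\,\partial_j\varphi'(p_\theta(x)),
\]
which matches \eqref{ij entry of phi fisher matrix}, so the two metrics agree.
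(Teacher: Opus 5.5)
Your argument is correct and is the same as the paper's, which simply invokes the fact that $(\mathrm{grad}\,f|_S)_p$ is the orthogonal projection of $(\mathrm{grad}\,f)_p$ onto $T_p(S)$ together with Theorem~\ref{thm:var_sq_norm}; you supply the details the paper omits: the Pythagoras step, the identification of the gradient (as in Appendix B), and the check that the Eguchi metric on $S$ is the restriction of $g^{(\varphi)}$. One small slip: your own verification shows that the $\varphi$-representation of $(\mathrm{grad}\,f)_p$ is exactly $A-\mathbb{E}_\varphi[A]$, so the constant $c=\bigl(\sum_y[\varphi''(p(y))]^{-1}\bigr)^{-1}$ you first announce should be $1$, though this does not affect the equality condition.
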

\begin{proof} Since, for every smooth function $f$ on $\mathcal{P}$, $(\operatorname{grad}f|_{S})_p$ is the orthogonal projection of $(\operatorname{grad}f)_p$ onto $T_p(S)$, the result follows from Theorem~\ref{thm:var_sq_norm}. 
\end{proof}
Let $r:\Theta\to\mathbb{R}^k$ be a differentiable function given by
\begin{equation*}
    r(\theta)
=
\big[r_1(\theta),\ldots,r_k(\theta)\big]^\top,
\end{equation*}
its Jacobian matrix is defined by
\[
J(\theta)\coloneq \frac{\partial r(\theta)}{\partial \theta}
\coloneq
\big[\partial_j r_i(\theta)\big]_{i,j=1}^k.
\]
The $\varphi$-escort distribution is defined by
\[
p_{\varphi,\theta}(\mathrm{x})
\coloneq
\frac{
\left[\varphi''\big(p_\theta(\mathrm{x})\big)\right]^{-1}
}{
\displaystyle
\sum_{\mathrm{y}\in\mathcal{X}}
\left[\varphi''\big(p_\theta(\mathrm{y})\big)\right]^{-1}
},
\qquad \mathrm{x}\in\mathcal{X}.
\]
For an unbiased estimator
\[
\hat r(\mathrm{X})
=
\big[\hat r_1(\mathrm{X}),\ldots,\hat r_k(\mathrm{X})\big]^\top
\]
of $r(\theta)$, define its covariance matrix with respect to
$p_{\varphi,\theta}$ by
\[
V_{\varphi,\theta}[\hat r(\mathrm{X})]
\coloneq
\left[
\operatorname{Cov}_{\varphi,\theta}
\big(\hat r_i(\mathrm{X}),\hat r_j(\mathrm{X})\big)
\right]_{i,j=1}^k,
\]
where
\[
\operatorname{Cov}_{\varphi,\theta}
\big(\hat r_i(\mathrm{X}),\hat r_j(\mathrm{X})\big)
\coloneq
\mathbb{E}_{\varphi,\theta}
\big[\hat r_i(\mathrm{X})\hat r_j(\mathrm{X})\big]
-
\mathbb{E}_{\varphi,\theta}
\big[\hat r_i(\mathrm{X})\big]\,
\mathbb{E}_{\varphi,\theta}
\big[\hat r_j(\mathrm{X})\big],
\]
with $\mathbb{E}_{\varphi,\theta}$ denoting expectation with respect to
$p_{\varphi,\theta}$.

In the following, we state one of the main results of this paper, a generalized CRLB for a finite sample. The significance of
this result will be discussed in Section~\ref{sec:attainment}.
\begin{theorem}[$\varphi$-CRLB]\label{thm:discrete_phi_crlb}
Let $\mathrm{X}=(X_1,\ldots,X_n)$ be distributed according to $p_\theta$. Let
\[
\hat r(\mathrm{X})
=
\big[\hat r_1(\mathrm{X}),\ldots,\hat r_k(\mathrm{X})\big]^\top
\]
be an unbiased estimator of $r(\theta)$ with respect to $p_\theta$, that is,
\[
\mathbb{E}_{\theta}[\hat r_i(\mathrm{X})]
=
r_i(\theta),
\qquad i=1,\ldots,k.
\]
Suppose that the $\varphi$-Fisher information matrix
$G_n^{(\varphi)}(\theta)\coloneqq\mathbb{E}_{\theta}\left[\frac{\partial}{\partial\theta}\varphi'(p_{\theta}(\mathrm{X}))u_{\theta}(\mathrm{X})^{\top}\right]$ is positive definite for every
$\theta\in\Theta$. Then
\begin{equation}
\label{eqn:discrete_phi-CRLB}
V_{\varphi,\theta}[\hat r(\mathrm{X})]
\succeq
\frac{1}{
\displaystyle
\sum_{\mathrm{y}\in\mathcal{X}}
\left[\varphi''\big(p_\theta(\mathrm{y})\big)\right]^{-1}
}
J(\theta)
\left[G_n^{(\varphi)}(\theta)\right]^{-1}
J(\theta)^\top.
\end{equation}
\end{theorem}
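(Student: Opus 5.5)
We reduce the matrix inequality \eqref{eqn:discrete_phi-CRLB} to a scalar one and then apply Corollary~\ref{corollarry 1} with $S=\{p_\theta:\theta\in\Theta\}$ regarded as a submanifold of $\mathcal{P}$ (the simplex over the sample space, which is finite). Fix $\theta$ and an arbitrary vector $c=[c_1,\ldots,c_k]^\top\in\mathbb{R}^k$, and consider the scalar random variable $A\coloneqq c^\top\hat r(\mathrm{X})$. Since covariance is bilinear, $V_{\varphi}[A]=c^\top V_{\varphi,\theta}[\hat r(\mathrm{X})]\,c$. It therefore suffices to show that
\[
c^\top V_{\varphi,\theta}[\hat r(\mathrm{X})]\,c\;\ge\;\frac{c^\top J(\theta)[G_n^{(\varphi)}(\theta)]^{-1}J(\theta)^\top c}{\sum_{\mathrm{y}}[\varphi''(p_\theta(\mathrm{y}))]^{-1}}
\]
for every $c$, because this is exactly positive semidefiniteness of the difference.

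Next we compute the right-hand side of Corollary~\ref{corollarry 1}. The function $f=\mathbb{E}[A]$ restricted to $S$ is, in the coordinates $\theta$, the map $\theta\mapsto\mathbb{E}_\theta[c^\top\hat r(\mathrm{X})]=c^\top r(\theta)$, by unbiasedness. Hence $(\partial_j f)_{p_\theta}=\sum_i c_i\partial_j r_i(\theta)=(J(\theta)^\top c)_j$. The metric on $T_{p_\theta}(S)$ induced from \eqref{eqn:phi_riemannian_metric} has components $\langle\partial_i,\partial_j\rangle_{p_\theta}$. We must check that these agree with $g^{(\varphi)}_{ij}(\theta)$ of \eqref{ij entry of phi fisher matrix}. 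This follows because $\partial_i^{(m)}=\partial_i p_\theta$ and $\partial_j^{(\varphi)}=p_\theta\varphi''(p_\theta)\partial_j\log p_\theta=\partial_j\varphi'(p_\theta)$, so that $\sum_{\mathrm{x}}\partial_i p_\theta\,\partial_j\varphi'(p_\theta)=g^{(\varphi)}_{ij}(\theta)$. Substituting into \eqref{eqn:differential 2-form} then gives
\[
\|(d\mathbb{E}[A]|_S)_{p_\theta}\|^2=c^\top J(\theta)[G_n^{(\varphi)}(\theta)]^{-1}J(\theta)^\top c.
\]
Here positive definiteness of $G_n^{(\varphi)}(\theta)$ is used both to make $g^{(\varphi)}$ a genuine inner product on $T_{p_\theta}(S)$ and to justify the inverse.

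Combining these two steps with Corollary~\ref{corollarry 1} yields the scalar inequality, and hence the theorem. We expect the main obstacle to be justifying that \eqref{eqn:phi_riemannian_metric} is a valid inner product on all of $T_p(\mathcal{P})$, which is needed so that the projection argument behind Corollary~\ref{corollarry 1} applies. Symmetry follows from $\sum_{\mathrm{x}}\varphi''(p)X^{(m)}Y^{(m)}$, and positivity follows from $\varphi''>0$, which holds by strict convexity (assuming $\varphi''$ does not vanish; otherwise we would need to restrict attention). Once this is in place, the orthogonal projection of the gradient onto $T_p(S)$ has no larger norm, and the rest is the bookkeeping above.
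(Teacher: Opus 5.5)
Your proposal is correct and follows essentially the same route as the paper. You reduce to the scalar $A=c^\top\hat r(\mathrm{X})$, compute $\|(d\mathbb{E}[A]|_S)_{p_\theta}\|_{p_\theta}^2=c^\top J(\theta)[G_n^{(\varphi)}(\theta)]^{-1}J(\theta)^\top c$ via \eqref{eqn:differential 2-form}, identify $V_{\varphi,\theta}[A]=c^\top V_{\varphi,\theta}[\hat r(\mathrm{X})]c$, and invoke Corollary~\ref{corollarry 1}; your extra checks that $\langle\partial_i,\partial_j\rangle_{p_\theta}=g^{(\varphi)}_{ij}(\theta)$ (via $\partial_j^{(\varphi)}=\partial_j\varphi'(p_\theta)$) and that the metric is symmetric and positive definite when $\varphi''>0$ are sound details the paper leaves implicit.
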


\begin{proof}
Define $A \coloneqq c^\top \hat{r}(\mathrm{X})$, where $\hat{r}(\mathrm{X})$ is an unbiased estimator of $r(\theta)$ with respect to $p_{\theta}$ and $c \in \mathbb{R}^k$. 
    For $p_{\theta}\in S$, we can calculate the following
    \begin{align*}
         \|(d\mathbb{E}[A]\mid_{S})_{p_{\theta}}\|_{p_{\theta}}^2&=\frac{\partial}{\partial \theta}\mathbb{E}_{\theta}[c^{\top}\hat{r}(\mathrm{X})][G_{n}^{(\varphi)}(\theta)]^{-1}\left\{\frac{\partial}{\partial \theta}\mathbb{E}_{\theta}[c^{\top}\hat{r}(\mathrm{X})]\right\}^{\top},\quad \text{by } \eqref{eqn:differential 2-form}\\
         &=c^{\top} \frac{\partial \mathbb{E}_{\theta}[\hat{r}(\mathrm{X})]}{\partial \theta}[G_n^{(\varphi)}(\theta)]^{-1}\left\{\frac{\partial \mathbb{E}_{\theta}[\hat{r}(\mathrm{X})]}{\partial \theta}\right\}^{\top}c\\
         &=c^{\top}J(\theta)[G_n^{(\varphi)}(\theta)]^{-1} J(\theta)^{\top} c,
    \end{align*}
    and
    \begin{equation*}
   V_{\varphi,\theta}[A]=c^{\top}V_{\varphi,\theta}[\hat{r}(\mathrm{X})]c.
    \end{equation*}
    Applying the Corollary \ref{corollarry 1} to the submanifold $S$, we get
 $$c^{\top} V_{\varphi,\theta}[\hat{r}(\mathrm{X})]\;c\geq c^{\top} \frac{1}{\sum_{\mathrm{y}}\big[\varphi''\big(p_{\theta}(\mathrm{y})\big)\big]^{-1}}J(\theta)[G_n^{(\varphi)}(\theta)]^{-1}J(\theta)^\top c.$$
This establishes \eqref{eqn:discrete_phi-CRLB}.
\end{proof}
\begin{remark}$\varphi$-CRLB shares structural similarities with the generalized Cramér–Rao bound introduced by Jan Naudts \cite{naudts2004estimators}, particularly through the appearance of an escort distribution in its formulation. Although Naudts bound is formulated using a general escort distribution $P_{\theta}$, the $\varphi$-escort distribution $p_{\varphi,\theta}$ emerges naturally in the present framework.
\end{remark}
The following proposition provides a necessary and sufficient condition of equality in $\varphi$-CRLB.
\begin{proposition}\label{prop:discrete equality condition}
    Equality holds in \eqref{eqn:discrete_phi-CRLB} for $\hat{r}(\mathrm{X})$ if and only if
\begin{equation}\label{eqn:equality phi CRLB_finite support}
  \hat{r}(\mathrm{x})-\mathbb{E}_{\varphi,\theta}[\hat{r}(\mathrm{X})]=J(\theta)\big[G_n^{(\varphi)}(\theta)\big]^{-1}p_{\theta}(\mathrm{x})\varphi''\big(p_{\theta}(\mathrm{x})\big)u_{\theta}(\mathrm{x}).
\end{equation} 
\end{proposition}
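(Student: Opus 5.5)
The plan is to reduce the matrix equality to the scalar equality case of Corollary~\ref{corollarry 1}, applied to $A=c^\top\hat r(\mathrm{X})$ for arbitrary $c\in\mathbb{R}^k$, and then to identify the $\varphi$-representation of the gradient explicitly.

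\emph{Reduction to the scalar case.} By the proof of Theorem~\ref{thm:discrete_phi_crlb}, equality in \eqref{eqn:discrete_phi-CRLB} holds iff
\[
c^\top V_{\varphi,\theta}[\hat r]c=\frac{c^\top J G_n^{(\varphi)-1}J^\top c}{\sum_{\mathrm y}[\varphi''(p_\theta(\mathrm y))]^{-1}}\quad\text{for all } c.
\]
This is because a positive semidefinite difference matrix vanishes iff its quadratic form vanishes identically. By Corollary~\ref{corollarry 1}, this holds for a given $c$ iff $c^\top(\hat r-\mathbb{E}_{\varphi,\theta}[\hat r])\in T^{(\varphi)}_{p_\theta}(S)$. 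More precisely, the proof of that corollary (through Theorem~\ref{thm:var_sq_norm}) should show that $A-\mathbb{E}_\varphi[A]$ is the $\varphi$-representation of $(\mathrm{grad}\,\mathbb{E}[A])_p$ on $\mathcal{P}$. Equality then means this gradient already lies in $T_p(S)$, hence coincides with the restricted gradient $(\mathrm{grad}\,\mathbb{E}[A]|_S)_p$. I would make this identification explicit by revisiting Appendix B. That $A-\mathbb{E}_\varphi[A]$ lies in $T_p^{(\varphi)}(\mathcal{P})$ at all follows from the characterization proposition: $\mathbb{E}_p[(A-\mathbb{E}_\varphi A)/(p\varphi''(p))]=0$ is exactly the definition of the escort mean.

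\emph{Computing the restricted gradient.} Next I would compute the $\varphi$-representation of $(\mathrm{grad}\,\mathbb{E}[A]|_S)_p=\sum_{i,j}(g^{ij})^{(\varphi)}\partial_j\mathbb{E}_\theta[A]\,\partial_i$. The $e$-representation of $\partial_i$ is $\partial_i\log p_\theta$. Its $\varphi$-representation is therefore $p_\theta\varphi''(p_\theta)\partial_i\log p_\theta=\partial_i\varphi'(p_\theta)$, consistent with \eqref{ij entry of phi fisher matrix}. Using $\partial_j\mathbb{E}_\theta[c^\top\hat r]=(c^\top J)_j$, the $\varphi$-representation of the restricted gradient is
\[
c^\top J(\theta)\,[G_n^{(\varphi)}(\theta)]^{-1}\,p_\theta\varphi''(p_\theta)u_\theta,
\]
using the symmetry of $G_n^{(\varphi)}$, which Eguchi's construction guarantees.

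\emph{Removing $c$.} Equality for all $c$ is thus equivalent to
\[
c^\top\bigl(\hat r(\mathrm x)-\mathbb{E}_{\varphi,\theta}[\hat r]\bigr)=c^\top JG^{-1}p_\theta(\mathrm x)\varphi''(p_\theta(\mathrm x))u_\theta(\mathrm x)\quad\text{for all } c \text{ and } \mathrm x.
\]
Since $c$ is arbitrary, this is equivalent to \eqref{eqn:equality phi CRLB_finite support}. For the converse direction, I would also verify directly: if \eqref{eqn:equality phi CRLB_finite support} holds, then
\[
V_{\varphi,\theta}[\hat r]=JG^{-1}\,\mathbb{E}_{\varphi}\bigl[(p\varphi'' u)(p\varphi'' u)^\top\bigr]\,G^{-1}J^\top .
\]
Here $\mathbb{E}_\varphi[(p\varphi''u)(p\varphi''u)^\top]=\mathbb{E}_p[p\varphi''\,uu^\top]/\sum[\varphi'']^{-1}=G/\sum[\varphi'']^{-1}$, which gives the bound exactly. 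This direct check also guards against any sloppiness in the geometric step.

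The main obstacle is the precise equality characterization: ensuring that "$A-\mathbb{E}_\varphi[A]\in T^{(\varphi)}_p(S)$" forces it to equal the specific element above, not merely some element of $T^{(\varphi)}_p(S)$. I would handle this by writing $A-\mathbb{E}_\varphi[A]=\sum_i a_i\partial_i\varphi'(p_\theta)$. Pairing this with $\partial_j\log p_\theta$ under $\mathbb{E}_p$ gives the left side $\partial_j\mathbb{E}_\theta[A]=(c^\top J)_j$, because $\mathbb{E}_p[\partial_j\log p]=0$ kills the constant term. The right side is $\sum_i a_i g_{ij}$. This yields $a=G^{-1}J^\top c$, which pins down the coefficients.
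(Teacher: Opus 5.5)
Your proposal is correct and follows essentially the same route as the paper's proof in Appendix~C: reduce to $A=c^\top\hat r(\mathrm{X})$ via Corollary~\ref{corollarry 1}, expand $A-\mathbb{E}_{\varphi,\theta}[A]$ in the basis $\partial_i\varphi'(p_\theta)=p_\theta\varphi''(p_\theta)\partial_i\log p_\theta$ of $T^{(\varphi)}_{p_\theta}(S)$, and identify the coefficient matrix as $J(\theta)[G_n^{(\varphi)}(\theta)]^{-1}$ by pairing with the score under $p_\theta$. Your observation that membership in $T^{(\varphi)}_{p}(\mathcal{P})$ is automatic, so the real content is membership in $T^{(\varphi)}_{p_\theta}(S)$, is cleaner than the paper's write-up, which passes through that vacuous condition; your direct verification of the converse is a harmless extra.
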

\begin{proof}
See Appendix C.
\end{proof} 
\begin{remark}
   For $\varphi(t)=t \log t-t+1$, the bound~\eqref{eqn:discrete_phi-CRLB} reduces to the usual CRLB \eqref{eqn:CRLB}. Furthermore, the equality condition \eqref{eqn:equality phi CRLB_finite support} coincides with  \begin{equation}\label{eqn:CRLB_equality_condition}
       \hat{r}(\mathrm{x})-\mathbb{E}_{\theta}[\hat{r}(\mathrm{X})]=J(\theta)[G_n(\theta)]^{-1}u_{\theta}(\mathrm{x}),
   \end{equation}which is the equality condition for the usual CRLB.\end{remark}
Theorem~\ref{thm:discrete_phi_crlb} and Proposition~\ref{prop:discrete equality condition} when applied to the function $\varphi(t)=\varphi_{\alpha}(t)$ as in \eqref{eqn:varphi_function} we get the earlier result of ours in the following \cite{dhadumia2025generalized}.
\begin{theorem} [$\alpha$-CRLB] \label{thm:discrete_alpha_crlb}
    Let $\mathrm{X}=(X_1, \dots, X_n)$ be distributed according to $p_{\theta}$. Let $$\hat{r}(\mathrm{X})=[\hat{r}_1(\mathrm{X}),\ldots,\hat{r}_k(\mathrm{X})]^{\top}$$ be an unbiased estimator of $r(\theta)$ with respect to $p_{\theta}$, that is, $$\mathbb{E}_{\theta}[\hat{r}_i(\mathrm{X})]=r_i(\theta),\quad i=1,\ldots,k.$$ Suppose that the $\alpha$-Fisher information matrix $G_n^{(\alpha)}(\theta)=\mathbb{E}_{\theta}[p_{\theta}(\mathrm{X})^{\alpha}u_{\theta}(\mathrm{X})u_{\theta}(\mathrm{X})^{\top}]$, is positive definite for every $\theta\in\Theta$. Then
      \begin{equation}\label{eqn:discrete_alpha-CRLB} 
    V_{\alpha,\theta}[\hat{r}(\mathrm{X})]\succeq \frac{1}{\sum_{\mathrm{y}}p_{\theta}(\mathrm{y})^{1-\alpha}}J(\theta) [G_n^{(\alpha)}(\theta)]^{-1}J(\theta)^{\top},
\end{equation}
where $V_{\alpha,\theta}[\hat{r}(\mathrm{X})]\coloneqq[\operatorname{Cov}_{\alpha,\theta}\big(\hat{r}_i(\mathrm{X}),\hat{r}_j(\mathrm{X}))\big]_{i,j=1}^{k}$ is the covariance matrix and is with respect to the $\alpha$-escort distribution $$p_{\alpha,{\theta}}{(\mathrm{x})}\coloneqq\frac{p_{{\theta}}{(\mathrm{x})}^{1-\alpha}}{\sum_{\mathrm{y}}p_{{\theta}}{(\mathrm{y})}^{1-\alpha}},\quad \mathrm{x}\in\mathcal{X}.$$
Furthermore, equality holds in \eqref{eqn:discrete_alpha-CRLB} if and only if 
  \begin{equation}
      \hat{r}(\mathrm{x})-\mathbb{E}_{\alpha,\theta}[\hat{r}(\mathrm{X})]=J(\theta)\big[G_n^{(\alpha)}(\theta)\big]^{-1}p_{\theta}(\mathrm{x})^{\alpha}u_{\theta}(\mathrm{x}).
\end{equation} 
\end{theorem}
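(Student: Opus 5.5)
The $\alpha$-CRLB is the special case of Theorem~\ref{thm:discrete_phi_crlb} and Proposition~\ref{prop:discrete equality condition} obtained by taking $\varphi=\varphi_{\alpha}$ as in \eqref{eqn:varphi_function}. The plan is to compute every $\varphi$-dependent object for this choice and substitute. The only objects that depend on $\varphi$ are $\varphi'$, $\varphi''$, the escort $p_{\varphi,\theta}$, the normalizing sum $\sum_{\mathrm{y}}[\varphi''(p_\theta(\mathrm{y}))]^{-1}$, the matrix $G_n^{(\varphi)}(\theta)$, and the factor $p_\theta\varphi''(p_\theta)$ in the equality condition.

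First, for $\alpha\notin\{0,-1\}$, differentiate $\varphi_{\alpha}(t)=\frac{t^{1+\alpha}-t+\alpha(1-t)}{\alpha(1+\alpha)}$. This gives
\[
\varphi_\alpha'(t)=\frac{(1+\alpha)t^{\alpha}-1-\alpha}{\alpha(1+\alpha)}=\frac{t^{\alpha}-1}{\alpha},
\qquad
\varphi_\alpha''(t)=t^{\alpha-1}>0,
\]
so $\varphi_\alpha$ is strictly convex, consistent with \eqref{eqn:gamma_embedding}. For $\alpha=-1$, the choice $\varphi(t)=t-\log t-1$ gives $\varphi'(t)=1-t^{-1}$ and $\varphi''(t)=t^{-2}$, which are the same formulas at $\alpha=-1$. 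The case $\alpha=0$ is the KL case $\varphi(t)=t\log t-t+1$, where $\varphi''(t)=t^{-1}$, again agreeing with $t^{\alpha-1}$.

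From $\varphi''(t)=t^{\alpha-1}$, the rest follows directly.
\begin{itemize}
\item We have $[\varphi''(p_\theta(\mathrm{x}))]^{-1}=p_\theta(\mathrm{x})^{1-\alpha}$. Hence the $\varphi$-escort equals the $\alpha$-escort $p_{\alpha,\theta}$, the covariance $V_{\varphi,\theta}$ equals $V_{\alpha,\theta}$, and $\mathbb{E}_{\varphi,\theta}=\mathbb{E}_{\alpha,\theta}$. The normalizing sum becomes $\sum_{\mathrm{y}}p_\theta(\mathrm{y})^{1-\alpha}$.
\item By the chain rule, $\frac{\partial}{\partial\theta}\varphi'(p_\theta)=\varphi''(p_\theta)\,\partial_\theta p_\theta=p_\theta^{\alpha}u_\theta$. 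Therefore $G_n^{(\varphi)}(\theta)=\mathbb{E}_\theta[p_\theta^{\alpha}u_\theta u_\theta^\top]=G_n^{(\alpha)}(\theta)$, matching \eqref{eqn:alpha-Fisher-information}. So the positive-definiteness hypothesis in the statement is exactly the hypothesis of Theorem~\ref{thm:discrete_phi_crlb}.
\item We have $p_\theta\varphi''(p_\theta)=p_\theta^{\alpha}$.
\end{itemize}

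Substituting these into \eqref{eqn:discrete_phi-CRLB} gives \eqref{eqn:discrete_alpha-CRLB}. Substituting them into \eqref{eqn:equality phi CRLB_finite support} gives the stated equality condition.

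There is no real obstacle; the proof is a substitution. The only point needing care is the derivative computation and checking that the excluded values $\alpha=-1$ and $\alpha=0$ fit the same formulas through their separately defined $\varphi$. This matters because the statement asserts the result for all $\alpha$ for which $G_n^{(\alpha)}$ is positive definite.
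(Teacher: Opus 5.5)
Your proposal is correct and matches the paper's route exactly: the paper obtains the $\alpha$-CRLB by substituting $\varphi=\varphi_\alpha$ into Theorem~\ref{thm:discrete_phi_crlb} and Proposition~\ref{prop:discrete equality condition}. Your computation $\varphi_\alpha''(t)=t^{\alpha-1}$ yields the $\alpha$-escort, $G_n^{(\varphi)}=G_n^{(\alpha)}$ and $p_\theta\varphi''(p_\theta)=p_\theta^{\alpha}$, which are the details the paper leaves implicit, and your separate checks of $\alpha=0$ and $\alpha=-1$ add a little extra care.
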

   \begin{remark} 
       For $ \alpha=0$, $G_n^{(\alpha)}(\theta)$ reduces to the usual Fisher information matrix $G_n(\theta)$, and in this case, $\alpha$-CRLB matches the usual CRLB. Moreover, the equality condition reduces to that of the usual CRLB.
 \end{remark}
  We know that the sample mean is an unbiased estimator for the population mean under the Bernoulli model that attains the usual CRLB. The example below shows that the sample mean satisfies $\alpha$-CRLB with strict inequality.
\begin{example}\label{example 1}
   Let \(X_1, \dots, X_n\), $n>2$ be an i.i.d. sample from a Bernoulli distribution with parameter \( \theta \in (0,1) \), where \( p_{\theta}(x) = \theta^x (1-\theta)^{1-x},\,x\in\{0,1\}.\) Let \( \hat{r}(\mathrm{X}) =\overline{X}= \frac{1}{n} \sum_{i=1}^n X_i \), which satisfies $\mathbb{E}_{{\theta}}[\overline{X}]=\theta$. Then
  $$
V_{\alpha,\theta}[\overline{X}] > \frac{1}{\sum_{\mathrm{y}} p_{\theta}(\mathrm{y})^{1-\alpha}} [G_n^{(\alpha)}(\theta)]^{-1}, \quad\alpha \neq 0.\\
$$ 
See Appendix D for proof.
Note that for \( \alpha = 0 \), the above strict inequality becomes equality.
\end{example} 
\section{The General Support Case} \label{sec:cauchy-schwarz-proof}
Although Eguchi's theory applies to statistical models with finite, countably infinite, or continuous supports, Amari-Nagaoka's theory is limited to models with finite support. As a consequence, the bound~\eqref{eqn:discrete_phi-CRLB} applies only to models with finite support. In the following, we provide the conditions under which $\varphi$-CRLB extends to models with countably infinite or continuous support. 

Let $\mathrm{X}=(X_1, \dots, X_n)$ be distributed according to $p_{\theta}$ of support $\mathcal{X}_{\theta}$. Let $\hat{r}(\mathrm{X})=[\hat{r}_1(\mathrm{X}),\ldots,\hat{r}_k(\mathrm{X})]^{\top}$ be an unbiased estimator of the parameter $r(\theta)=[r_1(\theta),\ldots,r_k(\theta)]^{\top}$, a $k$-dimensional differentiable function of $\theta$, with respect to $p_{\theta}$, that is, $\mathbb{E}_{\theta}[\hat{r}_i]=r_i(\theta)$ for $i=1,\ldots,k$. Let $\varphi$ be a function as in \eqref{eqn:phi_family} such that $\int [\varphi''(p_{\theta}(\mathrm{y}))]^{-1} d\mu(\mathrm{y}) < \infty$. We assume the following regularity conditions:
\begin{enumerate}[label=\Roman*.]
\item [I.]$\int p_{\theta}(\mathrm{x})d\mu(\mathrm{x})$ can be differentiated under the integral sign with respect to $\theta$, that is,
  $$
 \frac{\partial}{\partial\theta} \int p_{\theta}(\mathrm{x}) \, d\mu(\mathrm{x})= \int \frac{\partial}{\partial\theta} p_{\theta}(\mathrm{x}) d\mu(\mathrm{x}).
  $$ 

\item [II.] $\int \hat{r}(\mathrm{x})p_{\theta}(\mathrm{x})d\mu(\mathrm{x})$ is finite and can be differentiated under the integral sign with respect to $\theta$, that is, 
   $$\frac{\partial}{\partial \theta}\int \hat{r}(\mathrm{x})p_{\boldsymbol\theta}(\mathrm{x})d\mu(\mathrm{x})=\int \hat{r}(\mathrm{x})\frac{\partial}{\partial\theta} p_{\theta}(\mathrm{x})^{\top} d\mu(\mathrm{x}).$$
\item [III.]
The functions $ \mathbb{X}(\mathrm{x}) \coloneqq \hat{r}(\mathrm{x}) - \mathbb{E}_{\varphi, \theta}[\hat{r}(\mathrm{X})]$ and $\mathbb{Y}(\mathrm{x}) \coloneqq p_{\theta}(\mathrm{x})\varphi''\big(p_{\theta}(\mathrm{x})\big) \, u_{\theta}(\mathrm{x})$ are square-integrable with respect to $p_{\varphi,\theta}$.
\item[IV.]The $\varphi$-Fisher information matrix $G_n^{(\varphi)}(\theta)=[g^{(\varphi)}_{ij}(\theta)]_{i,j=1}^{k}$, where \\$g^{(\varphi)}_{ij}(\theta) \coloneqq\mathbb{E}_{\theta}\Big[\partial_i\log p_{\theta}(\mathrm{X})\partial_j\varphi'(p_{\theta}(\mathrm{X}))\Big]$, is positive definite for every $\theta\in\Theta$.
\end{enumerate}
\begin{theorem}\label{thm:Phi_CRLB proof by cauchy-schwarz}
  Under the assumption of conditions $\text{I}-\text{IV}$ above, we have 
\begin{equation}\label{eqn:phi_crlb}
    V_{\varphi, \theta}[\hat{r}(\mathrm{X})] \succeq
    \frac{1}{\int_{\mathrm{y}} \left[\varphi''\left(p_{\theta}(\mathrm{y})\right)\right]^{-1} d\mu(\mathrm{y})} 
    J(\theta) 
    [G_n^{(\varphi)}(\theta)]^{-1}
    J(\theta)^\top,
\end{equation}where $G_n^{(\varphi)}(\theta)\coloneqq\mathbb{E}_{\theta}\left[\frac{\partial}{\partial\theta}\varphi'(p_{\theta}(\mathrm{X}))u_{\theta}(\mathrm{X})^{\top}\right]$ and $J(\theta)\coloneq\frac{\partial r(\theta)}{\partial\theta}\coloneqq[\partial_j r_i(\theta)]_{i,j=1}^k$. 
Furthermore, equality holds in \eqref{eqn:phi_crlb} if and only if 
    \begin{equation}
    \label{eqn:equality_condition_phi_crlb}
  \hat{r}(\mathrm{x})-\mathbb{E}_{\varphi,\theta}[\hat{r}(\mathrm{X})]=J(\theta)[G_n^{(\varphi)}(\theta)]^{-1}p_{\theta}(\mathrm{x})\varphi''\big(p_{\theta}(\mathrm{x})\big)u_{\theta}(\mathrm{x}).
 \end{equation}
\end{theorem}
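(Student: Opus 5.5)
The plan is a vector Cauchy--Schwarz argument under the escort measure $p_{\varphi,\theta}$, with $\mathbb{X}$ and $\mathbb{Y}$ as in condition III. Write $Z_\theta\coloneqq\int[\varphi''(p_\theta(\mathrm{y}))]^{-1}d\mu(\mathrm{y})$, so that $p_{\varphi,\theta}=[\varphi''(p_\theta)]^{-1}/Z_\theta$.

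First I compute the cross moment $\mathbb{E}_{\varphi,\theta}[\mathbb{X}\mathbb{Y}^\top]$. The factor $p_\theta\varphi''(p_\theta)$ in $\mathbb{Y}$ cancels the escort density up to $Z_\theta$, giving
\[
\mathbb{E}_{\varphi,\theta}[\mathbb{X}\mathbb{Y}^\top]=\frac{1}{Z_\theta}\int \mathbb{X}(\mathrm{x})\,\partial_\theta p_\theta(\mathrm{x})^\top d\mu(\mathrm{x}).
\]
Condition I gives $\int\partial_\theta p_\theta\,d\mu=0$, so the constant $\mathbb{E}_{\varphi,\theta}[\hat r]$ drops out. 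Condition II then gives
\[
\int\hat r\,\partial_\theta p_\theta^\top d\mu=\partial_\theta\,\mathbb{E}_\theta[\hat r]=J(\theta).
\]
Hence $\mathbb{E}_{\varphi,\theta}[\mathbb{X}\mathbb{Y}^\top]=J(\theta)/Z_\theta$.

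Next I compute $\mathbb{E}_{\varphi,\theta}[\mathbb{Y}\mathbb{Y}^\top]$. The integrand is
\[
\frac{p_\theta^2\varphi''(p_\theta)^2u_\theta u_\theta^\top}{\varphi''(p_\theta)Z_\theta}=\frac{p_\theta\,\varphi''(p_\theta)\,u_\theta u_\theta^\top\,p_\theta}{Z_\theta}.
\]
Since $\partial_\theta\varphi'(p_\theta)=\varphi''(p_\theta)\,p_\theta\,u_\theta$, this gives $\mathbb{E}_{\varphi,\theta}[\mathbb{Y}\mathbb{Y}^\top]=G_n^{(\varphi)}(\theta)/Z_\theta$. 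This matrix is symmetric, and positive definite by condition IV. Condition III guarantees that all of these moments are finite.

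For the bound, I apply the standard matrix Cauchy--Schwarz (covariance-projection) step with $M\coloneqq J(\theta)[G_n^{(\varphi)}(\theta)]^{-1}$. Expanding
\[
0\preceq\mathbb{E}_{\varphi,\theta}\big[(\mathbb{X}-M\mathbb{Y})(\mathbb{X}-M\mathbb{Y})^\top\big]
\]
and inserting the two moments above gives
\[
\mathbb{E}_{\varphi,\theta}[\mathbb{X}\mathbb{X}^\top]-\frac{1}{Z_\theta}J(\theta)[G_n^{(\varphi)}(\theta)]^{-1}J(\theta)^\top\succeq 0 .
\]
Because $\mathbb{X}$ is centered under $p_{\varphi,\theta}$, we have $\mathbb{E}_{\varphi,\theta}[\mathbb{X}\mathbb{X}^\top]=V_{\varphi,\theta}[\hat r(\mathrm{X})]$, and this is exactly \eqref{eqn:phi_crlb}.

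For the equality case, equality means the positive semidefinite matrix $\mathbb{E}_{\varphi,\theta}[(\mathbb{X}-M\mathbb{Y})(\mathbb{X}-M\mathbb{Y})^\top]$ is zero. Its trace is then zero, so $\mathbb{X}=M\mathbb{Y}$ holds $p_{\varphi,\theta}$-a.e., which is $\mu$-a.e. on the support. That is exactly \eqref{eqn:equality_condition_phi_crlb}. Conversely, substituting $\mathbb{X}=M\mathbb{Y}$ gives $V_{\varphi,\theta}[\hat r]=M(G_n^{(\varphi)}/Z_\theta)M^\top$, which equals the bound.

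I expect no real obstacle; the only delicate point is bookkeeping. One must check that $G_n^{(\varphi)}$ defined via $\partial_\theta\varphi'(p_\theta)\,u_\theta^\top$ is indeed symmetric, which the factorization above shows. One must also use condition I so that the $\mathbb{E}_{\varphi,\theta}[\hat r]$ centering, rather than the $\mathbb{E}_\theta[\hat r]$ centering, causes no extra term.
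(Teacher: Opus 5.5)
Your proof is correct. The three moment computations are exactly the paper's: $\mathbb{E}_{\varphi,\theta}[\mathbb{X}\mathbb{Y}^\top]=J(\theta)/Z_\theta$ (conditions I and II remove the centering term), $\mathbb{E}_{\varphi,\theta}[\mathbb{Y}\mathbb{Y}^\top]=G_n^{(\varphi)}(\theta)/Z_\theta$, and $\mathbb{E}_{\varphi,\theta}[\mathbb{X}\mathbb{X}^\top]=V_{\varphi,\theta}[\hat r(\mathrm{X})]$.

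You differ from the paper only in the finishing step. The paper fixes $a\in\mathbb{R}^k$ and applies the scalar Cauchy--Schwarz inequality to $a^\top\mathbb{X}$ and $\mathbb{Y}^\top b$, with $b=[G_n^{(\varphi)}(\theta)]^{-1}J(\theta)^\top a$. It then divides by the quadratic form $a^\top J(\theta)[G_n^{(\varphi)}(\theta)]^{-1}J(\theta)^\top a$, which can vanish when $J(\theta)^\top a=0$. You instead expand the positive semidefinite residual matrix $\mathbb{E}_{\varphi,\theta}[(\mathbb{X}-M\mathbb{Y})(\mathbb{X}-M\mathbb{Y})^\top]$, where $M=J(\theta)[G_n^{(\varphi)}(\theta)]^{-1}$ is the regression coefficient, and read off the matrix inequality directly.

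Your route pays off mainly in the equality case. The vanishing trace gives $\mathbb{X}=M\mathbb{Y}$ almost everywhere, with the correct coefficient matrix, in one step. The paper instead gets $a^\top\mathbb{X}=c(\theta)\,a^\top M\mathbb{Y}$ from scalar Cauchy--Schwarz equality. It must then treat the constant as independent of $a$ and show it equals $1$ via $J(\theta)=c(\theta)J(\theta)$, which tacitly assumes $J(\theta)\neq 0$. You also verify the converse explicitly, which the paper leaves implicit in the ``iff'' of Cauchy--Schwarz.

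The paper's route, for its part, uses only the scalar Cauchy--Schwarz inequality and follows the textbook proof of the classical CRLB line for line.
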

\begin{proof}

For any $a \in \mathbb{R}^k$, let $\mathbf{f}(\mathrm{x})=a^{\top}\mathbb{X}(\mathrm{x})$ and $\mathbf{g}(\mathrm{x})=\mathbb{Y}(\mathrm{x})^{\top}b$, where $b=[G_n^{(\varphi)}(\theta)]^{-1}
J(\theta)^{\top} a$. Then $\mathbf{f}$ and $\mathbf{g}$ are square-integrable with respect to $p_{\varphi,\theta}$ by $\text{III}$. Applying Cauchy–Schwarz inequality to the functions $\mathbf{f}$ and $\mathbf{g}$, we get
\begin{equation}
    \label{Cauchy-schwarz ine}
    \big(\mathbb{E}_{\varphi,\theta}\left[\mathbf{f}(\mathrm{X})\cdot \mathbf{g}(\mathrm{X}) \right]\big)^2
\leq 
\mathbb{E}_{\varphi,\theta}\left[\mathbf{f}(\mathrm{X})^2\right] \cdot 
\mathbb{E}_{\varphi,\theta}\left[\mathbf{g}(\mathrm{X}) ^2  \right].
\end{equation}
Observe that
\begin{align*}
   \mathbb{E}_{\varphi,\theta}&[\mathbf{f}(\mathrm{X})\cdot \mathbf{g} (\mathrm{X})]\\=&\int a^{\top}\big(\hat{r}(\mathrm{x})-\mathbb{E}_{\varphi,\theta}[\hat{r}(\mathrm{X})]\big)p_{\theta}(\mathrm{x})\varphi''\big(p_{\theta}(\mathrm{x})\big)u_{\theta}(\mathrm{x})^{\top}b\frac{\big[\varphi''\big(p_{\theta}(\mathrm{x})\big)\big]^{-1}}{\int \big[\varphi''\big(p_{\theta}(\mathrm{y})\big)\big]^{-1}d\mu(\mathrm{y})}d\mu(\mathrm{x})\\
   =& a^{\top}\int\big(\hat{r}(\mathrm{x})-\mathbb{E}_{\varphi,\theta}[\hat{r}(\mathrm{X})]\big)u_{\theta}(\mathrm{x})^{\top}p_{\theta}(\mathrm{x})\frac{1}{\int \big[\varphi''\big(p_{\theta}(\mathrm{y})\big)\big]^{-1}d\mu(\mathrm{y})}d\mu(\mathrm{x})\cdot b\\
    =&a^{\top}\frac{1}{\int \big[\varphi''\big(p_{\theta}(\mathrm{y})\big)\big]^{-1}d\mu(\mathrm{y})}\left[\int\hat{r}(\mathrm{x})\frac{\partial}{\partial\theta}p_{\theta}(\mathrm{x})^{\top}d\mu(\mathrm{x})-\int \mathbb{E}_{\varphi,\theta}[\hat{r}(\mathrm{X})]\frac{\partial}{\partial\theta}p_{\theta}(\mathrm{x})^{\top}d\mu(\mathrm{x})\right]b\\
    \overset{(a)}{=}& a^{\top}\frac{1}{\int \big[\varphi''\big(p_{\theta}(\mathrm{y})\big)\big]^{-1}d\mu(\mathrm{y})}\left\{ \frac{\partial \mathbb{E}_{\theta}[\hat{r}(\mathrm{X})]}{\partial \theta}- \mathbb{E}_{\varphi,\theta}[\hat{r}(\mathrm{X})]\left(\frac{\partial}{\partial\theta}\int  p_{\theta}(\mathrm{x})d\mu(\mathrm{x})\right)^{\top}\right\}b\\
    =&a^{\top}\frac{1}{\int \big[\varphi''\big(p_{\theta}(\mathrm{y})\big)\big]^{-1}d\mu(\mathrm{y})}\frac{\partial \mathbb{E}_{\theta}[\hat{r}(\mathrm{X})]}{\partial \theta} b\\
    =&a^{\top}\frac{1}{\int\big[\varphi''\big(p_{\theta}(\mathrm{y})\big)\big]^{-1}d\mu(\mathrm{y})}J(\theta)\;b,
   \end{align*}
where equality $(a)$ follows from regularity conditions $\text{I}$ and $\text{II}$.
 \begin{equation}
     \mathbb{E}_{\varphi,\theta}\left[\mathbf{f}(\mathrm{X})^2 \right]=a^{\top}\mathbb{E}_{\varphi,\theta}[(\hat{r}(\mathrm{X})-\mathbb{E}_{\varphi,\theta}[\hat{r}(\mathrm{X})])(\hat{r}(\mathrm{X})-\mathbb{E}_{\varphi,\theta}[\hat{r}(\mathrm{X})])^{\top}]a
    =a^{\top}V_{\varphi,\theta}[\hat{r}(\mathrm{X})]a,\nonumber
 \end{equation}which is finite since $\mathbf{f}$ is square-integrable.
\begin{align*}
    \mathbb{E}_{\varphi,\theta}[\mathbf{g}(\mathrm{X})^2]&=b^{\top}\mathbb{E}_{\varphi,\theta}\Big[p_{\theta}(\mathrm{X})\varphi''\big(p_{\theta}(\mathrm{X})\big)u_{\theta}(\mathrm{X})p_{\theta}(\mathrm{X})\varphi''\big(p_{\theta}(\mathrm{X})\big)u_{\theta}(\mathrm{X})^{\top}\Big]b\\
    &=b^{\top}\int \big(p_{\theta}(\mathrm{x})\big)^2\big(\varphi''\big(p_{\theta}(\mathrm{x})
\big)\big)^2u_{\theta}(\mathrm{x})u_{\theta}(\mathrm{x})^{\top}\frac{\big[\varphi''\big(p_{\theta}(\mathrm{x})\big)\big]^{-1}}{\int\big[\varphi''\big(p_{\theta}(\mathrm{y})\big)\big]^{-1}d\mu(\mathrm{y})}d\mu(\mathrm{x})\cdot b\\
   &=b^{\top}\frac{1}{\int\big[\varphi''\big(p_{\theta}(\mathrm{y})\big)\big]^{-1}d\mu(\mathrm{y})}\int \big(p_{\theta}(\mathrm{x})\big)^2\varphi''\big(p_{\theta}(\mathrm{x})\big)u_{\theta}(\mathrm{x})u_{\theta}(\mathrm{x})^{\top}d\mu(\mathrm{x})\cdot b\\
   &=b^{\top}\frac{1}{\int\big[\varphi''\big(p_{\theta}(\mathrm{y})\big)\big]^{-1}d\mu(\mathrm{y})}\int p_{\theta}(\mathrm{x})\frac{\partial}{\partial\theta}\varphi'(p_{\theta}(\mathrm{x}))u_{\theta}(\mathrm{x})^{\top}d\mu(\mathrm{x})\cdot b\\
   &=b^{\top}\frac{1}{\int\big[\varphi''\big(p_{\theta}(\mathrm{y})\big)\big]^{-1}d\mu(\mathrm{y})}G_{n}^{(\varphi)}(\theta)b.
\end{align*}
From \eqref{Cauchy-schwarz ine}, we get
\begin{align}
    \bigg[a^\top  \frac{1}{\int\big[\varphi''\big(p_{\theta}(\mathrm{y})\big)\big]^{-1}d\mu(\mathrm{y})}J(\theta)b\bigg]^2\leq a^{\top} V_{\varphi,\theta}[\hat{r}(\mathrm{X})]a b^{\top}\frac{1}{\int\big[\varphi''\big(p_{\theta}(\mathrm{y})\big)\big]^{-1}d\mu(\mathrm{y})}G_{n}^{(\varphi)}(\theta)b.\nonumber
\end{align}
That is 
\begin{equation}
\label{Cauchy-Swartz in variance terms}
\frac{1}{\int\big[\varphi''\big(p_{\theta}(\mathrm{y})\big)\big]^{-1}d\mu(\mathrm{y})}\bigg[a^\top J(\theta)b\bigg]^2\leq a^{\top} V_{\varphi,\theta}[\hat{r}(\mathrm{X})]ab^{\top}G_{n}^{(\varphi)}(\theta)b.
\end{equation}
Since $b=[G_n^{(\varphi)}(\theta)]^{-1}J(\theta)^{\top}a$, we get
\begin{align}
    \label{Cauchy-Swartz in variance terms LHS}
     \frac{1}{\int\big[\varphi''\big(p_{\theta}(\mathrm{y})\big)\big]^{-1}d\mu(\mathrm{y})}&\bigg[a^\top J(\theta)b\bigg]^2\nonumber\\&
     = \frac{1}{\int\big[\varphi''\big(p_{\theta}(\mathrm{y})\big)\big]^{-1}d\mu(\mathrm{y})}\bigg[a^\top J(\theta)[G_n^{(\varphi)}(\theta)]^{-1}J(\theta)^{\top}a\bigg]^2
\end{align}
and
\begin{align}
\label{Cauchy-Swartz in variance terms RHS}
    a^{\top} V_{\varphi,\theta}[\hat{r}(\mathrm{X})]ab^{\top}G_{n}^{(\varphi)}(\theta)b
    &=a^{\top} V_{\varphi,\theta}[\hat{r}(\mathrm{X})]a a^{\top}J(\theta)[G_n^{(\varphi)}(\theta)]^{-1}G_n^{(\varphi)}(\theta)[G_n^{(\varphi)}(\theta)]^{-1}J(\theta)^{\top}a\nonumber\\
    &=a^{\top} V_{\varphi,\theta}[\hat{r}(\mathrm{X})]a \Big[a^{\top}J(\theta)[G_n^{(\varphi)}(\theta)]^{-1}
   J(\theta)^{\top}a\Big].
\end{align} Since $G_n^{(\varphi)}(\theta)$ is positive definite, so is $[G_n^{(\varphi)}(\theta)]^{-1}$. Therefore $J(\theta)[G_n^{(\varphi)}(\theta)]^{-1}
   J(\theta)^{\top}$ is positive semi-definite.
Hence, using \eqref{Cauchy-Swartz in variance terms LHS} and \eqref{Cauchy-Swartz in variance terms RHS} in \eqref{Cauchy-Swartz in variance terms}, we get 
\begin{equation}
    a^{\top}\frac{1}{\int\big[\varphi''\big(p_{\theta}(\mathrm{y})\big)\big]^{-1}d\mu(\mathrm{y})}J(\theta)[G_n^{(\varphi)}(\theta)]^{-1}
   J(\theta)^{\top}a\leq a^{\top}V_{\varphi,\theta}[\hat{r}(\mathrm{X})]a.
\end{equation}
This establishes $\varphi$-CRLB.\\
Furthermore, by the equality condition of Cauchy–Schwarz inequality \eqref{Cauchy-schwarz ine}, we get equality in \eqref{eqn:phi_crlb} if and only if
\begin{equation}
a^{\top}\left\{\hat{r}(\mathrm{x}) - \mathbb{E}_{\varphi,\theta}[\hat{r}(\mathrm{X})]\right\}
= c(\theta)\, p_{\theta}(\mathrm{x})\, \varphi''\big(p_{\theta}(\mathrm{x})\big)
u_{\theta}(\mathrm{x})^{\top}b, \nonumber
\end{equation}
where $c(\theta)$ is a constant independent of $\mathrm{x}$.
That is, 
\begin{align}
a^{\top}\left\{\hat{r}(\mathrm{x}) - \mathbb{E}_{\varphi,\theta}[\hat{r}(\mathrm{X})]\right\}
&=  c(\theta)\, p_{\theta}(\mathrm{x})\, \varphi''\big(p_{\theta}(\mathrm{x})\big)
u_{\theta}(\mathrm{x})^{\top}[G_n^{(\varphi)}(\theta)]^{-1}J(\theta)^{\top}a\nonumber\\
&=c(\theta)\frac{\partial}{\partial \theta} \varphi'(p_{\theta}(\mathrm{x}))^{\top}[G_n^{(\varphi)}(\theta)]^{-1}J(\theta)^{\top}a\nonumber\\
&=a^{\top}c(\theta)J(\theta)[G_n^{(\varphi)}(\theta)]^{-1}\frac{\partial}{\partial \theta} \varphi'(p_{\theta}(\mathrm{x})),\label{eqn:eq_cond_intermed}
\end{align}
where \eqref{eqn:eq_cond_intermed} is obtained by using the fact that  $s^\top = s$ for scalar $s$. Since $a$ is arbitrary, \eqref{eqn:eq_cond_intermed} gives 
\begin{equation}\label{cond:Cauchy_swartz_eq}
 \hat{r}(\mathrm{x}) - \mathbb{E}_{\varphi,\theta}[\hat{r}(\mathrm{X})]  =c(\theta)J(\theta)[G_n^{(\varphi)}(\theta)]^{-1}\frac{\partial}{\partial \theta} \varphi'(p_{\theta}(\mathrm{x})).
\end{equation}
Hence
\begin{equation*}
    \hat{r}(\mathrm{x})u_{\theta}(\mathrm{x})^{\top} - \mathbb{E}_{\varphi,\theta}[\hat{r}(\mathrm{X})]  u_{\theta}(\mathrm{x})^{\top}=c(\theta)J(\theta)[G_n^{(\varphi)}(\theta)]^{-1}\frac{\partial}{\partial \theta} \varphi'(p_{\theta}(\mathrm{x}))u_{\theta}(\mathrm{x})^{\top}.
\end{equation*}
Taking expectation with respect to $p_{\theta}$, we obtain
\begin{align*}
    \int   \hat{r}(\mathrm{x})u_{\theta}(\mathrm{x})^{\top}p_{\theta}(\mathrm{x})d\mu(\mathrm{x})-&\mathbb{E}_{\varphi,\theta}[\hat{r}(\mathrm{X})]  \int u_{\theta}(\mathrm{x})^{\top}p_{\theta}(\mathrm{x})d\mu(\mathrm{x})\\
    =&c(\theta)J(\theta)[G_n^{(\varphi)}(\theta)]^{-1}\mathbb{E}_{\theta}\left[\frac{\partial}{\partial \theta} \varphi'(p_{\theta}(\mathrm{x}))u_{\theta}(\mathrm{X})^{\top}\right]\\
    =&c(\theta)J(\theta)[G_n^{(\varphi)}(\theta)]^{-1}G_n^{(\varphi)}(\theta).
\end{align*}This implies
\begin{equation*}
    \frac{\partial \mathbb{E}_{\theta}[\hat{r}(\mathrm{X})]}{\partial \theta}=c(\theta)J(\theta).
\end{equation*}
Hence \begin{equation*}
 J(\theta)=c(\theta)J(\theta).
\end{equation*}
Substituting this in \eqref{cond:Cauchy_swartz_eq}, we obtain
\begin{equation*}
    \hat{r}(\mathrm{x}) - \mathbb{E}_{\varphi,\theta}[\hat{r}(\mathrm{X})]  =J(\theta)[G_n^{(\varphi)}(\theta)]^{-1}\frac{\partial}{\partial \theta} \varphi'(p_{\theta}(\mathrm{x})),
\end{equation*} which is the equality condition \eqref{eqn:equality_condition_phi_crlb}. This completes the proof.
\end{proof}
The regularity conditions $\text{III-IV}$, for the function $\varphi(t)=\varphi_{\alpha}(t)$ as in \eqref{eqn:varphi_function} are the following:
\begin{enumerate}[label=\Roman*.]
    \item [A.] The functions $\hat{r}(\mathrm{x}) - \mathbb{E}_{\alpha, \theta}[\hat{r}(\mathrm{X})]$ and $p_{\theta}(\mathrm{x})^{\alpha} \, u_{\theta}(\mathrm{x})$ are square-integrable with respect to $p_{\alpha,\theta}$. 
  \item[B.] The $\alpha$-Fisher information matrix $G_n^{(\alpha)}(\theta)=[g^{(\alpha)}_{ij}(\theta)]_{i,j=1}^k$, where\\ $g^{(\alpha)}_{ij}(\theta)\coloneqq\mathbb{E}_{\theta}\big[p_{\theta}(\mathrm{X})^{\alpha}\partial_i\log p_{\theta}(\mathrm{X}) \partial_j \log p_{\theta}(\mathrm{X})\big]$, is positive definite for every $\theta\in\Theta$.
\end{enumerate}
In the following theorem, we establish the continuous version of the bound \eqref{eqn:discrete_alpha-CRLB}. 
\begin{theorem}\label{thm:alpha_CRLB_ext}
 Let $\mathrm{X}=(X_1, \ldots, X_n)$ be distributed according to $p_{\theta}$. Suppose we are interested in estimating $r(\theta)=[r_1(\theta),\ldots,r_k(\theta)]^{\top}$, a $k$-dimensional differentiable function of $\theta$.
Let $\hat{r}(\mathrm{X})=[\hat{r}_1(\mathrm{X}),\ldots,\hat{r}_k(\mathrm{X})]^{\top}$ be an unbiased estimator of $r(\theta)$ with respect to $p_{\theta}$, that is, $\mathbb{E}_{\theta}[\hat{r}_i]=r_i(\theta)$ for $i=1,\ldots,k$. Under the regularity conditions $\text{I}-\text{II}$ and $\text{A}-\text{B}$, we have
\begin{equation}
\label{eqn:alpha_CRLB_ext}
    V_{\alpha,\theta}[\hat{r}(\mathrm{X})]\succeq \frac{1}{\int p_{\theta}(\mathrm{y})^{1-\alpha}d\mu(\mathrm{y})}J(\theta) [G_n^{(\alpha)}(\theta)]^{-1}J(\theta)^{\top}.
\end{equation}
Furthermore, equality holds in \eqref{eqn:alpha_CRLB_ext} if and only if 
 \begin{equation}
    \label{eqn:alpha-eq-cond}
 \hat{r}(\mathrm{x})-\mathbb{E}_{\alpha,\theta}[\hat{r}(\mathrm{X})]=J(\theta)[G_n^{(\alpha)}(\theta)]^{-1}p_{\theta}(\mathrm{x})^{\alpha}u_{\theta}(\mathrm{x}).
  \end{equation}
\end{theorem}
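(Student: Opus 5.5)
The plan is to obtain Theorem~\ref{thm:alpha_CRLB_ext} as a direct specialization of Theorem~\ref{thm:Phi_CRLB proof by cauchy-schwarz} to the choice $\varphi=\varphi_\alpha$ in \eqref{eqn:varphi_function}. Nothing new has to be proved; we only check that each $\varphi$-quantity appearing in \eqref{eqn:phi_crlb} and \eqref{eqn:equality_condition_phi_crlb} becomes the corresponding $\alpha$-quantity, and that conditions A--B are exactly conditions III--IV in this case.

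\textbf{Basic derivatives.} For $\alpha\neq 0,-1$ we have
\[
\varphi_\alpha'(t)=\frac{t^\alpha-1}{\alpha}, \qquad \varphi_\alpha''(t)=t^{\alpha-1}.
\]
For $\alpha=-1$, $\varphi''(t)=t^{-2}=t^{\alpha-1}$ again. For $\alpha=0$ (the limiting case, with $\varphi(t)=t\log t-t+1$), $\varphi''(t)=t^{-1}$. So in every case $\varphi''(t)=t^{\alpha-1}$, which is positive on $(0,\infty)$; hence $\varphi_\alpha$ is strictly convex and twice differentiable, as Theorem~\ref{thm:Phi_CRLB proof by cauchy-schwarz} requires.

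\textbf{Matching the quantities.} Using $\varphi''(t)=t^{\alpha-1}$, each object in Theorem~\ref{thm:Phi_CRLB proof by cauchy-schwarz} reduces as follows.
\begin{itemize}
\item Normalizer: $[\varphi''(p_\theta)]^{-1}=p_\theta^{1-\alpha}$. Thus $\int[\varphi''(p_\theta(\mathrm{y}))]^{-1}d\mu(\mathrm{y})=\int p_\theta(\mathrm{y})^{1-\alpha}d\mu(\mathrm{y})$, which is assumed finite as in the setup. It follows that $p_{\varphi,\theta}=p_{\alpha,\theta}$, $V_{\varphi,\theta}=V_{\alpha,\theta}$ and $\mathbb{E}_{\varphi,\theta}=\mathbb{E}_{\alpha,\theta}$.
\item Score weight: $p_\theta\varphi''(p_\theta)u_\theta=p_\theta^{\alpha}u_\theta$.
\item Information matrix: by the chain rule,
\[
\partial_j\varphi'(p_\theta)=\varphi''(p_\theta)\,\partial_j p_\theta=p_\theta^{\alpha}\,\partial_j\log p_\theta .
\]
Hence $g^{(\varphi)}_{ij}=\mathbb{E}_\theta\bigl[p_\theta^\alpha\,\partial_i\log p_\theta\,\partial_j\log p_\theta\bigr]=g^{(\alpha)}_{ij}$, so $G_n^{(\varphi)}=G_n^{(\alpha)}$, in agreement with \eqref{eqn:alpha-Fisher-information}.
\end{itemize}

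\textbf{Conditions and conclusion.} With these identifications, condition III becomes condition A and condition IV becomes condition B. Conditions I--II do not involve $\varphi$, so they carry over unchanged. Substituting into \eqref{eqn:phi_crlb} yields \eqref{eqn:alpha_CRLB_ext}, and substituting into \eqref{eqn:equality_condition_phi_crlb} yields \eqref{eqn:alpha-eq-cond}.

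\textbf{Main obstacle.} The only delicate point is uniformity across $\alpha$. The formula for $\varphi_\alpha$ has a separate branch at $\alpha=-1$ and a limiting definition at $\alpha=0$. I would handle these by checking $\varphi''(t)=t^{\alpha-1}$ separately in each branch, as done above, so that no limiting argument is needed. If one prefers to avoid the reduction entirely, the Cauchy--Schwarz argument in the proof of Theorem~\ref{thm:Phi_CRLB proof by cauchy-schwarz} can be repeated verbatim. In that version one takes $\mathbf{f}=a^\top(\hat r-\mathbb{E}_{\alpha,\theta}[\hat r])$ and $\mathbf{g}=p_\theta^\alpha u_\theta^\top b$, with $b=[G_n^{(\alpha)}]^{-1}J^\top a$, and integrates against $p_{\alpha,\theta}$.
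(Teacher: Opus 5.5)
Your proposal is correct and matches the paper's proof, which consists of the single line that this theorem follows from Theorem~\ref{thm:Phi_CRLB proof by cauchy-schwarz} with $\varphi=\varphi_\alpha$. Your check that $\varphi_\alpha''(t)=t^{\alpha-1}$ in every branch, and that this turns the $\varphi$-escort, the weighted score, $G_n^{(\varphi)}$ and conditions III--IV into their $\alpha$-counterparts, supplies the bookkeeping the paper leaves implicit.
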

\begin{proof}
    Follows from Theorem~\ref{thm:Phi_CRLB proof by cauchy-schwarz} with $\varphi(t)=\varphi_{\alpha}(t)$ as in \eqref{eqn:varphi_function}.
\end{proof}
The following example illustrates a situation in which the $\alpha$-Fisher information is finite while the usual Fisher information diverges. Consequently, the usual CRLB is not applicable. 

\begin{example}\label{exm:equality}
  Let $X_1,\ldots,X_n$ be an i.i.d. sample from 
    \begin{equation}\label{eqn:triangular_pdf}
p_{\theta}(x)=
\begin{cases}
\dfrac{2}{\theta}\left(1-\dfrac{x}{\theta}\right), & 0\le x<\theta,\\[6pt]
0, & \text{otherwise}.
\end{cases}
\end{equation} Choose $\hat{r}(\mathrm{X})=3\overline{X}$. Observe that $\mathbb{E}_{\theta}[3\overline{X}]=\theta$.\\ \\
For $\alpha<2$,
$$V_{\alpha,\theta}[3\overline{X}]=\frac{9(2-\alpha)\theta^2}{n(4-\alpha)(3-\alpha)^2}$$ and for $\alpha>0$,
$$G_n^{(\alpha)}(\theta)=\frac{n2^{(\alpha+1)n}(n\alpha^3+\alpha+2)}{\alpha(\alpha+1)^2(\alpha+2)^n\theta^{\alpha n+2}}.$$
Note that as $\alpha\to 0$, $G_n^{(\alpha)}(\theta)$ diverges. So, the usual CRLB is not useful in this case.\\ Hence, for $0<\alpha<2$, we have 
L.H.S. of $\alpha$-CRLB
\begin{equation*}
    V_{\alpha,\theta}[3\overline{X}]=\frac{9(2-\alpha)\theta^2}{n(4-\alpha)(3-\alpha)^2}
\end{equation*} and 
R.H.S. of $\alpha$-CRLB
\begin{equation*}
     \frac{1}{\int_0^{\theta}\cdots\int_0^{\theta}p_{\theta}(\mathrm{x})^{1-\alpha}d\mathrm{x}}[G_n^{(\alpha)}(\theta)]^{-1}= \frac{(2-\alpha)^n\alpha(\alpha+1)^2(\alpha+2)\theta^2}{2^{2n}n(n\alpha^3+\alpha+2)}.
\end{equation*}
Now observe that, for $n=1$ and $\alpha=1$, equality is attained in the $\alpha$-CRLB. In this case, the lower bound is $3\theta^2/4$, which is attained by $\hat{r}(\mathrm{X})=3X$.\\
For $\alpha=1$ and $n=1$, we have
\[
\mathbb{E}_{\alpha,\theta}[\hat{r}(\mathrm{X})]=\frac{3\theta}{2},
\qquad
G_1^{(\alpha)}(\theta)=\frac{4}{3\theta^3},
\qquad
p_{\theta}(x)^{\alpha}u_{\theta}(x)=\frac{4x}{\theta^3}-\frac{2}{\theta^2}.
\]
Note that
\[
\hat{r}(\mathrm{x})-\mathbb{E}_{\alpha,\theta}[\hat{r}(\mathrm{X})]
=
3x-\frac{3\theta}{2}
=
\frac{3\theta^3}{4}
\left(
\frac{4x}{\theta^3}-\frac{2}{\theta^2}
\right)
=
\bigl[G_1^{(\alpha)}(\theta)\bigr]^{-1}
p_{\theta}(x)^{\alpha}u_{\theta}(x),
\]
which is the equality condition \eqref{eqn:alpha-eq-cond}. For details of calculations, see Appendix E.
\end{example}
\begin{remark}
 The above example is valid only when $0<\alpha<2$. For \(\alpha\leq 0\), \(G_n^{(\alpha)}(\theta)\) diverges. For \(\alpha\geq 2\), \(\int_0^{\theta}\cdots\int_0^{\theta}p_{\theta}(\mathrm{x})^{1-\alpha}d\mathrm{x}\) is not finite. See Appendix E.
\end{remark}
\begin{remark}
   Note that the density function $p_{\theta}$ in the above example is the same as the one considered in Example 1 of \cite{naudts2004estimators}. For this example, the escort density ensuring the optimality of Naudts bound for the estimator $\hat{r}(\mathrm{X})=3X$, denoted by $P_{\theta}$ \cite[Example 1 continued, p.~9]{naudts2004estimators}, coincides with $p_{\alpha,\theta}$ at $\alpha=1$.
\end{remark}
It is well known that equality holds in the usual CRLB if and only if the underlying family of distributions forms an exponential
family \cite{fend1959attainment, wijsman1973attainment, muller1989attainment}. In the following theorem, we establish an analogous characterization of equality in the $\varphi$-CRLB in terms of the $\mathbb{B}^{(\varphi)}$-family, under the following additional assumptions:
\begin{enumerate}[label=\Roman*.]
      \item[V.]  The support of $p_{\theta}$ is independent of $\theta$. That is, $\mathcal{X}_{\theta}=\mathcal{X}$ for all $\theta\in\Theta$. 
     \item[VI.] For each $\mathrm{x}\in \mathcal{X}$, $\varphi'(p_{\theta}(\mathrm{x}))$ is twice continuously differentiable in $\mathbb{\theta}$.
    \item[VII.] $V_{\varphi,\theta}[\hat{r}(\mathrm{X})]$ is positive definite for every $\theta\in\Theta$.
\end{enumerate} Although the following theorem holds for the vector parameter case, we provide the proof for the scalar case ($k=1$) for the sake of convenience.
\begin{theorem}\label{prop:phi equality family} Let $\mathrm{X}=(X_1, \ldots, X_n)$ be distributed according to $p_{\theta}$. Let $\hat{r}(\mathrm{X})=[\hat{r}_1(\mathrm{X}),\ldots,\hat{r}_k(\mathrm{X})]^{\top}$ be an unbiased estimator of $r(\theta)=[r_1(\theta),\ldots,r_k(\theta)]^{\top}$, a $k$-dimensional differentiable function of $\theta$, with respect to $p_{\theta}$. Suppose conditions $\text{I}-\text{VII}$ are satisfied. Then $\hat{r}(\mathrm{X})$ attains the $\varphi$-CRLB if and only if
\begin{equation}\label{eqn:vector_phi_phi family_equality}
      p_{\theta}( \mathrm{x})=(\varphi')^{-1}\left[C(\mathrm{x})+Z(\theta)+\lambda(\theta)^\top \hat{r}(\mathrm{x})\right]\quad\text{for }\mathrm{x}\in \mathcal{X},
  \end{equation} for some functions $C:\mathcal{X}\to \mathbb{R}$ and $Z\in C^1(\Theta)$, and $\lambda=[\lambda_1,\ldots,\lambda_k]^{\top}$, where $\lambda_i\in C^1(\Theta)$, $i=1,\ldots,k$, such that the $k\times k$ Jacobian matrix $R(\theta)\coloneqq\frac{\partial\lambda(\theta)}{\partial\theta}\coloneqq[\partial_j\lambda_i(\theta)]_{i,j=1}^{k}$ is invertible.
\end{theorem}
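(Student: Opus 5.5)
Both directions will go through the equality condition \eqref{eqn:equality_condition_phi_crlb} of Theorem~\ref{thm:Phi_CRLB proof by cauchy-schwarz}. The key identity is $p_\theta\varphi''(p_\theta)u_\theta=\frac{\partial}{\partial\theta}\varphi'(p_\theta(\mathrm{x}))$. With it, the condition reads
\[
\hat r(\mathrm{x})-\mathbb{E}_{\varphi,\theta}[\hat r(\mathrm{X})]=J(\theta)[G_n^{(\varphi)}(\theta)]^{-1}\frac{\partial}{\partial\theta}\varphi'(p_\theta(\mathrm{x})).
\]
I will present the scalar case $k=1$, as the paper indicates; the vector case is the same with matrices in place of scalars.

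\emph{Necessity.} Assume equality holds. First I need $J(\theta)$ invertible. If $J(\theta)$ were singular at some $\theta$, then $J(\theta)[G_n^{(\varphi)}]^{-1}J(\theta)^\top$ would be singular there. By equality, $V_{\varphi,\theta}[\hat r]$ would then be singular, contradicting VII. So $J(\theta)$ is invertible (in the scalar case, $J\neq 0$). Solving the displayed relation for the derivative gives
\[
\frac{\partial}{\partial\theta}\varphi'(p_\theta(\mathrm{x}))=G_n^{(\varphi)}(\theta)J(\theta)^{-1}\big(\hat r(\mathrm{x})-\mathbb{E}_{\varphi,\theta}[\hat r(\mathrm{X})]\big).
\]
Fix $\theta_0\in\Theta$ and integrate in $\theta$ along a path from $\theta_0$. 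This is legitimate because of V (common support, so the identity holds on a fixed $\mathcal{X}$) and VI (continuity of the derivative). The result is
\[
\varphi'(p_\theta(\mathrm{x}))=C(\mathrm{x})+Z(\theta)+\lambda(\theta)^\top\hat r(\mathrm{x}),
\]
with $C(\mathrm{x})=\varphi'(p_{\theta_0}(\mathrm{x}))$, $\lambda(\theta)^\top=\int_{\theta_0}^{\theta}G_n^{(\varphi)}J^{-1}$, and $Z(\theta)=-\int_{\theta_0}^{\theta}G_n^{(\varphi)}J^{-1}\mathbb{E}_{\varphi,\cdot}[\hat r]$. Inverting $\varphi'$, which is strictly increasing since $\varphi$ is strictly convex, gives \eqref{eqn:vector_phi_phi family_equality}. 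Then $R(\theta)=\partial\lambda/\partial\theta$ equals $(G_n^{(\varphi)}J^{-1})^\top$, which is invertible. The $C^1$ regularity of $Z$ and $\lambda$ follows from continuity of the integrands.

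\emph{The main obstacle} is the vector case of this step. The matrix $M(\theta)=G_n^{(\varphi)}(\theta)J(\theta)^{-1}$ must be a Jacobian, and the vector $M(\theta)\mathbb{E}_{\varphi,\theta}[\hat r]$ must be a gradient, for the path integrals to be well defined. I would obtain this from VI: equality of mixed partials of $\varphi'(p_\theta(\mathrm{x}))$ holds for every $\mathrm{x}$. By VII, $\hat r-\mathbb{E}_{\varphi,\theta}[\hat r]$ is not supported on a proper affine subspace, so the $\mathrm{x}$-dependent and constant parts can be separated, which yields the required integrability conditions. In the scalar case this issue does not arise.

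\emph{Sufficiency.} Assume \eqref{eqn:vector_phi_phi family_equality} holds with $R(\theta)$ invertible. Differentiating gives
\[
\frac{\partial}{\partial\theta}\varphi'(p_\theta(\mathrm{x}))=\nabla Z(\theta)+R(\theta)^\top\hat r(\mathrm{x}).
\]
Multiply by $u_\theta^\top p_\theta$ and integrate. Conditions I and II give $\int u_\theta p_\theta\,d\mu=0$ and $\int\hat r\,u_\theta^\top p_\theta\,d\mu=J(\theta)$, so $G_n^{(\varphi)}(\theta)=R(\theta)^\top J(\theta)$. Next, integrate $\frac{\partial}{\partial\theta}\varphi'(p_\theta)$ against $[\varphi''(p_\theta)]^{-1}$. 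Since $[\varphi''(p_\theta)]^{-1}\frac{\partial}{\partial\theta}\varphi'(p_\theta)=\partial_\theta p_\theta$, this integral vanishes by I. Hence $\nabla Z(\theta)=-R(\theta)^\top\mathbb{E}_{\varphi,\theta}[\hat r]$, that is,
\[
\frac{\partial}{\partial\theta}\varphi'(p_\theta(\mathrm{x}))=R(\theta)^\top\big(\hat r(\mathrm{x})-\mathbb{E}_{\varphi,\theta}[\hat r(\mathrm{X})]\big).
\]
Since $J(\theta)[G_n^{(\varphi)}(\theta)]^{-1}=J(\theta)J(\theta)^{-1}R(\theta)^{-\top}=R(\theta)^{-\top}$, applying $J(\theta)[G_n^{(\varphi)}(\theta)]^{-1}$ to both sides yields exactly \eqref{eqn:equality_condition_phi_crlb}. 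Here $J$ is invertible because $G_n^{(\varphi)}=R^\top J$ is positive definite by IV. Theorem~\ref{thm:Phi_CRLB proof by cauchy-schwarz} then gives equality in the $\varphi$-CRLB.
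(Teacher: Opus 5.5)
Your proposal is correct and follows essentially the same route as the paper. Both directions pass through the equality condition \eqref{eqn:equality_condition_phi_crlb} of Theorem~\ref{thm:Phi_CRLB proof by cauchy-schwarz}: necessity by solving for $\frac{\partial}{\partial\theta}\varphi'(p_\theta(\mathrm{x}))$ and integrating in $\theta$, and sufficiency by using $\mathbb{E}_\theta[u_\theta(\mathrm{X})]=0$ to get $\nabla Z=-R^\top\mathbb{E}_{\varphi,\theta}[\hat r(\mathrm{X})]$ and condition II to get $G_n^{(\varphi)}=R^\top J$; your extra steps (invertibility of $J(\theta)$ from VII, invertibility of $R(\theta)$ in the necessity direction, and the vector-case sketch via VI and VII) fill in points the paper's scalar proof leaves implicit, and like the paper you tacitly need $\Theta$ connected for the integration to yield a single $C(\mathrm{x})$.
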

\begin{proof}
Assume that equality holds in \eqref{eqn:phi_crlb}. Then, from \eqref{eqn:equality_condition_phi_crlb}, we have
\begin{equation}\label{eq:001}
    \hat{r}(\mathrm{x}) = a(\theta)\frac{\partial}{\partial\theta}\varphi'(p_{\theta}(\mathrm{x}))+b(\theta),
\end{equation}
with \begin{equation*}
    a(\theta) =
 r'(\theta)[G_n^{(\varphi)}(\theta)]^{-1} \quad\text{and}\quad b(\theta) = \mathbb{E}_{\varphi,\theta}[\hat{r}(\mathrm{X})].
\end{equation*}
Since $V_{\varphi,\theta}[\hat{r}(\mathrm{X})]>0$, it follows that
$\hat{r}(\mathrm{x})$ is not constant. Hence, $a(\theta)\neq 0$.
Therefore, from \eqref{eq:001}, we obtain
\begin{equation}
    \frac{\partial}{\partial\theta}
    \varphi'(p_{\theta}(\mathrm{x}))
    =a(\theta)^{-1}\hat{r}(\mathrm{x})
    -a(\theta)^{-1}b(\theta).
\end{equation}
Integrating the above equation with respect to $\theta$ yields
\eqref{eqn:vector_phi_phi family_equality}.

Conversely, let us assume that \eqref{eqn:vector_phi_phi family_equality} holds. That is,
\begin{equation*}
    \varphi'(p_{\theta}(\mathrm{x}))
= C(\mathrm{x}) + Z(\theta) + \lambda(\theta) \hat{r}(\mathrm{x}),\quad \mathrm{x}\in \mathcal{X}.
\end{equation*} Then
\begin{equation}\label{eqn:derivative_of_phi_function}
    \frac{\partial}{\partial \theta}\varphi'(p_{\theta}(\mathrm{x}))=Z'(\theta)+  \lambda'(\theta)\hat{r}(\mathrm{x}).
\end{equation}
That is, \begin{equation*}
    \varphi''(p_{\theta}(\mathrm{x}))\frac{\partial}{\partial \theta}p_{\theta}(\mathrm{x})=Z'(\theta)+  \lambda'(\theta)\hat{r}(\mathrm{x}).
\end{equation*}
Hence
\begin{align}
\label{eqn:p-2}
 u_{\theta}(\mathrm{x})=\frac{1}{p_{\theta}(\mathrm{x})\varphi''\big(p_{\theta}(\mathrm{x})\big)}  Z'(\theta)+  \lambda'(\theta)\frac{1}{p_{\theta}(\mathrm{x})\varphi''\big(p_{\theta}(\mathrm{x})\big)}\hat{r}(\mathrm{x}).
\end{align}
Since $\mathbb{E}_{\theta}[u_{\theta}(\mathrm{X})]=0$, it follows that
\begin{align*}
Z'(\theta)\int\big[\varphi''\big(p_{\theta}(\mathrm{x})\big)\big]^{-1}d\mu(\mathrm{x})+ \lambda'(\theta)\mathbb{E}_{\theta}\left[\frac{\hat{r}(\mathrm{X})}{p_{\theta}(\mathrm{X})\varphi''\big(p_{\theta}(\mathrm{X})\big)}\right]&=0.
\end{align*}This implies that
\begin{equation}
Z'(\theta)\int \big[\varphi''\big(p_{\theta}(\mathrm{x})\big)\big]^{-1}d\mu(\mathrm{x})+ \lambda'(\theta)\mathbb{E}_{\varphi, \theta}[\hat{r}(\mathrm{X})]\int \big[\varphi''\big(p_{\theta}(\mathrm{x})\big)\big]^{-1}d\mu(\mathrm{x})=0.\nonumber
\end{equation}That is, 
\begin{equation}\label{eqn:j0}
Z'(\theta)=-  \lambda'(\theta)\mathbb{E}_{\varphi, \theta}[\hat{r}(\mathrm{X})].
\end{equation}
Thus \eqref{eqn:derivative_of_phi_function} becomes
\begin{equation}
   \frac{\partial}{\partial \theta}\varphi'(p_{\theta}(\mathrm{x}))=   \lambda'(\theta)\left(\hat{r}(\mathrm{x})-\mathbb{E}_{\varphi,\theta}[\hat{r}(\mathrm{X})]\right).
\end{equation}
Since $\lambda'(\theta)\neq0$, we get 
\begin{equation}\label{eqn:p2}
  \left[\lambda'(\theta)\right]^{-1}   \frac{\partial}{\partial \theta}\varphi'(p_{\theta}(\mathrm{x}))= \hat{r}(\mathrm{x})-\mathbb{E}_{\varphi,\theta}[\hat{r}(\mathrm{X})].
\end{equation}
Observe that
\begin{align}
    G_{n}^{(\varphi)}(\theta)&=\mathbb{E}_{\theta}\left[\frac{\partial}{\partial\theta}\varphi'(p_{\theta}(\mathrm{X}))u_{\theta}(\mathrm{X})\right]\nonumber\\
    &=\int \frac{\partial}{\partial\theta}\varphi'(p_{\theta}(\mathrm{x}))u_{\theta}(\mathrm{x}) p_{\theta}(\mathrm{x})d\mu(\mathrm{x})\nonumber\\
   &\overset{(a)}{=}\int \left\{Z'(\theta)+  \lambda'(\theta)\hat{r}(\mathrm{x})\right\}\frac{\partial}{\partial\theta}  p_{\theta}(\mathrm{x})d\mu(\mathrm{x})\nonumber\\
    &= Z'(\theta)\int\frac{\partial}{\partial\theta}  p_{\theta}(\mathrm{x}) d\mu(\mathrm{x})+ \lambda'(\theta)\int \hat{r}(\mathrm{x})\frac{\partial}{\partial\theta}  p_{\theta}(\mathrm{x}) d\mu(\mathrm{x})\nonumber\\
    &=   \lambda'(\theta)\frac{\partial \mathbb{E}_{\theta}[\hat{r}(\mathrm{X})]}{\partial\theta}
    = \lambda'(\theta)r'(\theta),\label{eqn:Phi_fisher_inf_&_expon_family}
\end{align}
where the equality $(a)$ follows from \eqref{eqn:derivative_of_phi_function}. This implies 
\begin{equation*}
     \left[\lambda'(\theta)\right]^{-1}=r'(\theta)[G_n^{(\varphi)}(\theta)]^{-1}.
\end{equation*}
Thus, from \eqref{eqn:p2}, we get 
\begin{equation*}
   r'(\theta)[G_n^{(\varphi)}(\theta)]^{-1}\frac{\partial}{\partial \theta}\varphi'(p_{\theta}(\mathrm{x}))=\hat{r}(\mathrm{x})-\mathbb{E}_{\varphi,\theta}[\hat{r}(\mathrm{X})],
\end{equation*}
the condition for equality in $\varphi$-CRLB.
This completes the proof.
\end{proof}
\begin{remark}
  For $\varphi(t)=t\log t-t+1$, \eqref{eqn:vector_phi_phi family_equality} coincides with the exponential family
  which is the equality condition for the usual CRLB \cite{fend1959attainment, wijsman1973attainment, muller1989attainment}. 
\end{remark}
Due to the $\varphi$-family representation of \eqref{eqn:vector_phi_phi family_equality}, the quantities involved in the $\varphi$-CRLB can be computed using derivatives rather than direct integration. We now consider the canonical form of \eqref{eqn:vector_phi_phi family_equality} and observe that it provides a simpler framework for these calculations, and facilitates the evaluation of the $\varphi$-CRLB.
\begin{proposition}\label{prop:canonical_phi_family}
    The canonical form of \eqref{eqn:vector_phi_phi family_equality} is as follows
\begin{equation}\label{canonical_phi_family}
    p_{\lambda}(\mathrm{x})=(\varphi')^{-1}\![C(\mathrm{x})+H(\lambda)+\lambda^{\top}\hat{r}(\mathrm{x})] \quad \text{for }\mathrm{x}\in\mathcal{X},
\end{equation}where $\lambda=[\lambda_1,\ldots,\lambda_k]^{\top}$ are the canonical parameters. Under the regularity condition $I$ for $p_{\lambda}$, we get the following:
\begin{itemize}
\item[(i)]\begin{align*}
\mathbb{E}_{\varphi,\lambda}[\hat{r}(\mathrm{X})]
&=
-\frac{\partial H(\lambda)}{\partial \lambda},
\qquad
G_n^{(\varphi)}(\lambda)
=
\frac{\partial \mathbb{E}_{\lambda}[\hat{r}(\mathrm{X})]}{\partial \lambda},\\
V_{\varphi,\lambda}[\hat{r}(\mathrm{X})]
&=
\frac{1}{\int [\varphi''(p_{\lambda}(\mathrm{y}))]^{-1}d\mu(\mathrm{y})}\frac{\partial \mathbb{E}_{\lambda}[\hat{r}(\mathrm{X})]}{\partial \lambda}.
\end{align*}
\item[(ii)] $ V_{\varphi,\lambda}[\hat{r}(\mathrm{X})]=\frac{1}{\int [\varphi''(p_{\theta}(\mathrm{y}))]^{-1}d\mu(\mathrm{y})}J(\theta)[G_n^{(\varphi)}(\theta)]^{-1}J(\theta)^{\top}.$
\end{itemize}
Observe that for $\varphi(t)=t\log t-t+1,$ \eqref{canonical_phi_family} reduces to the canonical exponential family form \cite[Ch.~1, Sec.~4]{Lehman1983}, \cite{rao1973linear}, where 
\begin{equation*}
    \mathbb{E}_{\lambda}[\hat{r}(\mathrm{X})]=-\frac{\partial H(\lambda)}{\partial \lambda},\quad G_n(\lambda)=V_{\lambda}[\hat{r}(\mathrm{X})]=-\frac{\partial^2}{\partial\lambda\partial \lambda^{\top}}H(\lambda)
\end{equation*}and 
\begin{equation*}
    V_{\lambda}[\hat{r}(\mathrm{X})]=J(\theta)
[G_n(\theta)]^{-1}J(\theta).
\end{equation*}For details, see Appendix F.
\end{proposition}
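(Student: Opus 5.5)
The plan is to compute each quantity from the single identity obtained by differentiating the canonical form \eqref{canonical_phi_family} in $\lambda$. For each $\mathrm{x}$,
\begin{equation*}
\frac{\partial}{\partial\lambda}\varphi'(p_\lambda(\mathrm{x})) = \frac{\partial H(\lambda)}{\partial\lambda} + \hat r(\mathrm{x}),
\end{equation*}
and since $\frac{\partial}{\partial\lambda}\varphi'(p_\lambda)=\varphi''(p_\lambda)\frac{\partial}{\partial\lambda}p_\lambda$, this gives
\begin{equation*}
p_\lambda(\mathrm{x})\varphi''(p_\lambda(\mathrm{x}))\,u_\lambda(\mathrm{x}) = \frac{\partial H(\lambda)}{\partial\lambda}+\hat r(\mathrm{x}).
\end{equation*}

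\emph{First claim of (i).} Take the $p_\lambda$-expectation of $u_\lambda$. By condition I it vanishes, so
\begin{equation*}
\int [\varphi''(p_\lambda(\mathrm{x}))]^{-1}\Bigl(\tfrac{\partial H}{\partial\lambda}+\hat r(\mathrm{x})\Bigr)d\mu(\mathrm{x}) = 0 .
\end{equation*}
Dividing by $\int[\varphi''(p_\lambda)]^{-1}d\mu$ turns this into an expectation under the escort $p_{\varphi,\lambda}$, giving $\mathbb{E}_{\varphi,\lambda}[\hat r(\mathrm{X})]=-\partial H/\partial\lambda$. This is exactly the computation leading to \eqref{eqn:j0} in the proof of Theorem~\ref{prop:phi equality family}, with $Z=H$ and $\lambda'=\mathbb{I}$.

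\emph{Second claim of (i).} Following \eqref{eqn:Phi_fisher_inf_&_expon_family}, write
\begin{equation*}
G_n^{(\varphi)}(\lambda)=\int\Bigl(\tfrac{\partial H}{\partial\lambda}+\hat r(\mathrm{x})\Bigr)\frac{\partial}{\partial\lambda}p_\lambda(\mathrm{x})^\top d\mu(\mathrm{x}).
\end{equation*}
The $H$-term drops out by condition I. The remaining term equals $\partial\mathbb{E}_\lambda[\hat r]/\partial\lambda$ by condition II, which I will assume for $p_\lambda$ as well. A caveat: the statement only mentions condition I, so II is needed here and should be flagged.

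\emph{Third claim of (i).} Use $\hat r-\mathbb{E}_{\varphi,\lambda}[\hat r]=p_\lambda\varphi''(p_\lambda)u_\lambda$. Then
\begin{align*}
V_{\varphi,\lambda}[\hat r]
&=\mathbb{E}_{\varphi,\lambda}\bigl[(p_\lambda\varphi''(p_\lambda))^2u_\lambda u_\lambda^\top\bigr]\\
&=\frac{1}{\int[\varphi''(p_\lambda)]^{-1}d\mu}\int p_\lambda^2\varphi''(p_\lambda)u_\lambda u_\lambda^\top d\mu\\
&=\frac{G_n^{(\varphi)}(\lambda)}{\int[\varphi''(p_\lambda)]^{-1}d\mu}.
\end{align*}
This is the $\mathbb{E}[\mathbf g^2]$ computation from the proof of Theorem~\ref{thm:Phi_CRLB proof by cauchy-schwarz}. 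Combined with the second claim, it gives the stated formula.

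\emph{Part (ii).} It is quickest to observe that the displayed identity is precisely the equality condition \eqref{eqn:equality_condition_phi_crlb} in the $\lambda$-parametrization. Indeed, there $r(\lambda)=\mathbb{E}_\lambda[\hat r]$ and $J(\lambda)=G_n^{(\varphi)}(\lambda)$, so $J[G_n^{(\varphi)}]^{-1}=\mathbb{I}$. Theorem~\ref{thm:Phi_CRLB proof by cauchy-schwarz} (or directly Theorem~\ref{prop:phi equality family}, since $R(\lambda)=\mathbb{I}$ is invertible) then yields equality in the $\varphi$-CRLB.

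Alternatively, one can check invariance directly under a reparametrization $\theta\mapsto\lambda(\theta)$ with invertible Jacobian $R$. The chain rule gives $u_\theta = R^\top u_\lambda$ and $G_n^{(\varphi)}(\theta)=R^\top G_n^{(\varphi)}(\lambda)R$. With $J(\theta)=\partial\mathbb{E}[\hat r]/\partial\theta=G_n^{(\varphi)}(\lambda)R$, we get $J(\theta)[G_n^{(\varphi)}(\theta)]^{-1}J(\theta)^\top=G_n^{(\varphi)}(\lambda)$. Since $V_{\varphi}$ and the normalizing integral do not depend on the parametrization, (i) then gives (ii).

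\emph{Exponential-family remark.} Specialize to $\varphi''(t)=1/t$, so that $\int[\varphi'']^{-1}=1$ and the escort is $p$ itself. Then $\mathbb{E}_\lambda[\hat r]=-\partial H/\partial\lambda$, and differentiating once more gives $G_n=V_\lambda[\hat r]=-\partial^2H/\partial\lambda\partial\lambda^\top$.

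The main obstacle is bookkeeping rather than ideas. Two points need care: justifying the differentiation under the integral sign (condition II must be assumed for $p_\lambda$), and keeping the transpose and Jacobian conventions consistent in the reparametrization step. One should also note that $G_n^{(\varphi)}(\lambda)$ is symmetric here, being a multiple of a covariance matrix.
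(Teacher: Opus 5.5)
Your proposal is correct and follows essentially the same route as the paper's Appendix~F. You differentiate the canonical form in $\lambda$ and use $\mathbb{E}_{\lambda}[u_{\lambda}(\mathrm{X})]=0$ to get $\mathbb{E}_{\varphi,\lambda}[\hat r(\mathrm{X})]=-\partial H/\partial\lambda$. You identify $G_n^{(\varphi)}(\lambda)$ both with $\partial\mathbb{E}_{\lambda}[\hat r(\mathrm{X})]/\partial\lambda$ and with $\int[\varphi''(p_{\lambda})]^{-1}d\mu\cdot V_{\varphi,\lambda}[\hat r(\mathrm{X})]$. You then obtain (ii) from the chain-rule relation $G_n^{(\varphi)}(\theta)=R^{\top}G_n^{(\varphi)}(\lambda)R$. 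That is your ``alternative'' argument, and it is the one that actually delivers the $\theta$-form: the equality-condition shortcut with $R(\lambda)=\mathbb{I}$ only reproduces the $\lambda$-coordinate identity already contained in (i). Your flag is also justified: condition II, not only I, is needed for $\int\hat r\,\partial_{\lambda}p_{\lambda}^{\top}\,d\mu=\partial\mathbb{E}_{\lambda}[\hat r(\mathrm{X})]/\partial\lambda$, and the paper uses it silently as well.
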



 \begin{remark}\label{remark:eq_alpha_crlb_family}
Theorem~\ref{prop:phi equality family} for $\varphi(t)=\varphi_{\alpha}(t)$ as in \eqref{eqn:varphi_function} gives the equality condition for $\alpha$-CRLB. That is, for $\alpha\in\mathbb{R}\setminus\{0\}$, $p_{\theta}$ forms a $\mathbb{B}^{(\alpha)}$-family as follows:
 \begin{equation}\label{eqn:join_alpha_family}
      p_{\theta}(\mathrm{x})=\left[N(\mathrm{x})+F(\theta)+\omega(\theta)^{\top}\hat{r}(\mathrm{x})\right]^{1/\alpha}\quad\text{for  }\mathrm{x}\in \mathcal{X}
   \end{equation}
where $N(\mathrm{x})=1+\alpha C(\mathrm{x})$, $F(\theta)=\alpha Z(\theta)$, $\omega(\theta)=\alpha\lambda(\theta)$.
\end{remark}
In the following, we present an example where the equality is attained in the $\alpha$-CRLB.
\begin{example}\label{exm:optimal_estimator_example_sample_mean}
     Let $\mathrm{X}=(X_1,\ldots, X_n)$ be distributed according to a multivariate t-distribution $t_{n,\nu}(\mu\mathbf{1}_n,\mathbb{I}_n),$ $\mu\in\mathbb{R}$, $\nu>2$ whose density is given by
    \begin{equation}
        \label{eqn:mult_t_dist_mu}
       p_{\mu}(\mathrm{x})= {\displaystyle {N_{n,\nu}}\left[1+\frac{1}{\nu}(\mathrm{x}-\mu\mathbf{1}_n)^{\top}(\mathrm{x}-\mu\mathbf{1}_n)\right]^{-(\nu +n)/2}}, \;\mathrm{x}=[x_1,\ldots,x_n]^{\top}\in \mathbb{R}^n,
    \end{equation} where 
$$N_{n,\nu}=\frac {\Gamma \left[(\nu +n)/2\right]}{\Gamma (\nu /2)\nu ^{n/2} \pi^{n/2}}.$$
Note that
\begin{equation}\label{eqn:marginal_student t_mu}
     p_{\mu}(x_i)
=
\frac{\Gamma[(\nu+1)/2]}
{\Gamma\!\left(\nu/2\right)\sqrt{\nu \pi}}
\left\{1+\frac{(x_i-\mu)^2}{\nu}\right\}^{-(\nu+1)/2}\; \text{ for  } i=1,\cdots,n.
\end{equation}
\eqref{eqn:mult_t_dist_mu} can be expressed as follows:
\begin{equation}\label{eqn:alpha_mult_t_mu}
    p_{\mu}(\mathrm{x})=\left[N_{n,\nu}^{-2/(\nu+n)}\sum_{i=1}^nx_i^2+N_{n,\nu}^{-2/(\nu+n)}\left(1+\frac{n\mu^2}{\nu}\right)-N_{n,\nu}^{-2/(\nu+n)}\frac{2n}{\nu}\mu\overline{x}\right]^{-(\nu+n)/2}.
\end{equation} 
Let $\hat{r}(\mathrm{X})=\overline{X}$, where $\overline{X}=\sum_{i=1}^n X_i/n$. Observe that $\mathbb{E}_{\mu}[\overline{X}]=\mu$. \eqref{eqn:alpha_mult_t_mu} is of the form \eqref{eqn:join_alpha_family} with $\theta=\mu$, $\alpha=-2/(\nu+n)$, $N(\mathrm{x})=N_{n,\nu}^{-2/(\nu+n)}\sum_{i=1}^nx_i^2$, $F(\theta)=N_{n,\nu}^{-2/(\nu+n)}\left(1+\frac{n\mu^2}{\nu}\right)$, $\omega(\theta)=-N_{n,\nu}^{-2/(\nu+n)}\frac{2n}{\nu}\mu$, and $\hat{r}(\mathrm{x})=\overline{x}$. Hence, by remark~\ref{remark:eq_alpha_crlb_family}, $\hat{r}(\mathrm{X})$ attains the $\alpha$-CRLB with $\alpha=-2/(\nu+n)$ for the parameter $\mu$.
\end{example}
\begin{remark}
Notice that, in Example~\ref{exm:optimal_estimator_example_sample_mean}, $X_i$'s are identically distributed but not independent. When $X_i$'s are i.i.d. following \eqref{eqn:marginal_student t_mu}, their joint distribution will not have a $B^{(\alpha)}$ form as in \eqref{eqn:join_alpha_family}. As a result, unlike the exponential family case, we will not be able to identify an estimator from its joint distribution that attains the $\alpha$-CRLB for its expected value. In the following, we first illustrate how an estimator attaining the usual CRLB can be identified from the joint distribution of an exponential family, and then show that this characterization generally fails for the $\mathbb{B}^{(\alpha)}$-family and in general for the $\mathbb{B}^{(\varphi)}$-family.

 Let $X_1,\ldots, X_n$ be an i.i.d. sample from an exponential family. Then the joint distribution is
  \begin{align*}
p_{\theta}(\mathrm{x})
&=\prod_{i=1}^{n} p_{\theta}(x_i) \\
&=\prod_{i=1}^{n}
\exp\!\left(
w(x_i)+L(\theta)+\eta(\theta)^{\top}g(x_i)
\right) \\
&=\exp\!\left(
\sum_{i=1}^{n}
\bigl[w(x_i)+L(\theta)+\eta(\theta)^{\top}g(x_i)\bigr]
\right) \\
&=\exp\!\left(
\sum_{i=1}^{n} w(x_i)
+nL(\theta)
+\eta(\theta)^{\top}\sum_{i=1}^{n} g(x_i)
\right),
\end{align*}
which is again an exponential family. Let $\hat{r}(\mathrm{X})=\sum_{i=1}^n g(X_i)$ and $\mathbb{E}_{\theta}[\hat{r}(\mathrm{X})]=r(\theta)$. $\hat{r}(\mathrm{X})$ attains the usual CRLB for the parameter $r(\theta)$ \cite[eq.~(3.13)]{crrao1945}.

However, in the case of $\mathbb{B}^{(\alpha)}$-family,
\begin{align}
p_{\theta}(\mathrm{x})
&=\prod_{i=1}^{n}p_{\theta}(x_i) \nonumber\\
&=\prod_{i=1}^{n}
\left[
w(x_i)+L(\theta)+\eta(\theta)^\top g(x_i)
\right]^{1/\alpha} \nonumber\\
&=
\Bigg[
\prod_{i=1}^{n}
\left\{
w(x_i)+L(\theta)+\eta(\theta)^\top g(x_i)
\right\}
\Bigg]^{1/\alpha} \nonumber\\
&=
\Bigg[
L(\theta)^n
+L(\theta)^{n-1}
\sum_{i=1}^{n}
\left\{
w(x_i)+\eta(\theta)^\top g(x_i)
\right\}
\nonumber\\
&\qquad
+L(\theta)^{n-2}
\sum_{1\leq i<j\leq n}
\left\{
w(x_i)+\eta(\theta)^\top g(x_i)
\right\}
\left\{
w(x_j)+\eta(\theta)^\top g(x_j)
\right\}
\nonumber\\
&\qquad
+\cdots
+\prod_{i=1}^{n}
\left\{
w(x_i)+\eta(\theta)^\top g(x_i)
\right\}
\Bigg]^{1/\alpha}.
\label{eqn:joint_b_alpha_family}
\end{align}
Hence, for $\alpha\neq0$, the joint density generally cannot be represented as a $\mathbb{B}^{(\alpha)}$-family involving only the single additive statistic $\sum_{i=1}^n g(X_i)$.

This explains the difference between the two settings. For an i.i.d. exponential family, the product structure is compatible with the additive structure of the log-density, and the statistic $\sum_{i=1}^n g(X_i)$ provides the corresponding equality case for the usual CRLB. For an i.i.d. $\mathbb{B}^{(\alpha)}$-family, the product structure generates higher-order interaction terms among the observations. Consequently, the equality characterization for the $\alpha$-CRLB based on a single additive statistic cannot, in general, be applied to the joint distribution of an i.i.d. sample.
\end{remark}
\begin{section}{Attainment of the $\varphi$-CRLB}\label{sec:attainment}
  We now see the significance of Theorem~\ref{thm:discrete_phi_crlb} (Theorem~\ref{thm:Phi_CRLB proof by cauchy-schwarz}). 
\begin{theorem}
\label{thm:Bregman_assym_variance}
    Let $\mathrm{X}=(X_1, \ldots, X_n)$ be distributed according to $p_{\theta}$. Suppose we are interested in estimating $r(\theta)=[r_1(\theta),\ldots,r_k(\theta)]^{\top}$, a $k$-dimensional differentiable function of $\theta$. Let $\hat{r}(\mathrm{X})=[\hat{r}_1(\mathrm{X}),\ldots,\hat{r}_k(\mathrm{X})]^{\top}$ be an unbiased estimator of $r(\theta)$ with respect to $p_{\theta}$, that is, $\mathbb{E}_{\theta}[\hat{r}_i]=r_i(\theta)$ for $i=1,\ldots,k$. Suppose $\hat{r}(\mathrm{x})-r(\theta)$ is square-integrable with respect to $p_{\theta}$ and regularity conditions $\text{I}-\text{IV}$ are satisfied. If $\hat{r}(\mathrm{X})$ attains the $\varphi$-CRLB \eqref{eqn:phi_crlb}, then
    \begin{align}
\label{eqn:Bregman_assym_variance}
   V_{\theta}[\hat{r}(\mathrm{X})] = J(\theta)[I^{(\varphi)}_{n}(\theta)]^{-1}J(\theta)^{\top},
\end{align}
where $[I^{(\varphi)}_{n}(\theta)]^{-1}$ is the the asymptotic covariance matrix of the MBDE   evaluated at $(X_1,\ldots,X_n)$ \cite{Roy2019DensityPower}, where
\begin{equation}
[I_n^{(\varphi)}(\theta)]^{-1}
=
[G_n^{(\varphi)}(\theta)]^{-1}
K_n^{(\varphi)}(\theta)
[G_n^{(\varphi)}(\theta)]^{-1},
\end{equation}with
\begin{equation*}
    G_{n}^{(\varphi)}(\theta)
\coloneqq
\mathbb{E}_{\theta}\!\left[
\frac{\partial}{\partial\theta}
\varphi'\!\big(p_{\theta}(\mathrm{X})\big)
\,u_{\theta}(\mathrm{X})^\top
\right], \quad K_{n}^{(\varphi)}(\theta)
\coloneqq
V_{\theta}\!\left[
p_{\theta}(\mathrm{X})
\varphi''\!\big(p_{\theta}(\mathrm{X})\big)
u_{\theta}(\mathrm{X})
\right].
\end{equation*}
\end{theorem}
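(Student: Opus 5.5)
The plan is to use the equality characterization in Theorem~\ref{thm:Phi_CRLB proof by cauchy-schwarz}. If $\hat r(\mathrm{X})$ attains the $\varphi$-CRLB, then \eqref{eqn:equality_condition_phi_crlb} holds pointwise:
\[
\hat r(\mathrm{x})-\mathbb{E}_{\varphi,\theta}[\hat r(\mathrm{X})]=J(\theta)[G_n^{(\varphi)}(\theta)]^{-1}\mathbb{Y}(\mathrm{x}),\qquad \mathbb{Y}(\mathrm{x})=p_\theta(\mathrm{x})\varphi''(p_\theta(\mathrm{x}))u_\theta(\mathrm{x}).
\]
The right-hand side is a fixed matrix applied to $\mathbb{Y}(\mathrm{x})$. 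The left-hand side is $\hat r(\mathrm{x})$ minus a vector that does not depend on $\mathrm{x}$. Both the ordinary covariance $V_\theta$ and the variance $K_n^{(\varphi)}(\theta)=V_\theta[\mathbb{Y}(\mathrm{X})]$ are invariant under adding constants, so the whole argument reduces to taking the $p_\theta$-covariance of both sides.

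The steps, in order, are as follows.

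\textbf{Step 1.} Subtract constants. Since $\hat r(\mathrm{x})-\mathbb{E}_{\varphi,\theta}[\hat r]$ and $\hat r(\mathrm{x})-r(\theta)$ differ by a constant vector, we have $V_\theta[\hat r(\mathrm{X})]=V_\theta[\hat r(\mathrm{X})-\mathbb{E}_{\varphi,\theta}[\hat r(\mathrm{X})]]$. This quantity is finite because $\hat r-r(\theta)$ is assumed square-integrable with respect to $p_\theta$.

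\textbf{Step 2.} Apply the identity above and use bilinearity of covariance with the constant matrix $M=J(\theta)[G_n^{(\varphi)}(\theta)]^{-1}$. This gives
\[
V_\theta[\hat r(\mathrm{X})]=M\,V_\theta[\mathbb{Y}(\mathrm{X})]\,M^\top=J(\theta)[G_n^{(\varphi)}(\theta)]^{-1}K_n^{(\varphi)}(\theta)\big([G_n^{(\varphi)}(\theta)]^{-1}\big)^\top J(\theta)^\top.
\]

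\textbf{Step 3.} Identify the result with $[I_n^{(\varphi)}]^{-1}$. This requires $G_n^{(\varphi)}$ to be symmetric. I will argue it via
\[
G_n^{(\varphi)}=\int p_\theta^2\varphi''(p_\theta)\,u_\theta u_\theta^\top\,d\mu,
\]
which uses $\partial_\theta\varphi'(p_\theta)=p_\theta\varphi''(p_\theta)u_\theta$. This computation already appears in the proof of Theorem~\ref{thm:Phi_CRLB proof by cauchy-schwarz} and is manifestly symmetric. Condition IV then guarantees the inverse exists, so the expression equals $J(\theta)[I_n^{(\varphi)}(\theta)]^{-1}J(\theta)^\top$.

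The main obstacle is integrability. $V_\theta[\mathbb{Y}]$ must be well defined under $p_\theta$, whereas condition III only gives square-integrability under the escort $p_{\varphi,\theta}$. I would handle this using the equality identity itself. Pick a $k\times k$ left-inverse of $M$, which exists if $J(\theta)$ is invertible. Then $\mathbb{Y}$ is an affine function of $\hat r$, so it inherits square-integrability under $p_\theta$ from the hypothesis on $\hat r-r(\theta)$. If $J$ is singular, it suffices to note that Step 2 only uses $M\mathbb{Y}$, which equals $\hat r$ minus a constant and is therefore $p_\theta$-square-integrable. In that case I define the right-hand side as $V_\theta[M\mathbb{Y}]$, which equals $M K_n^{(\varphi)} M^\top$ whenever $K_n^{(\varphi)}$ is finite.
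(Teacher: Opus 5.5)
Your proposal is correct and follows the paper's proof. Like the paper, you invoke the equality condition \eqref{eqn:equality_condition_phi_crlb}, take the $p_\theta$-covariance of both sides, and use the symmetry of $G_n^{(\varphi)}(\theta)$; in addition, your justification of that symmetry via $G_n^{(\varphi)}(\theta)=\int p_\theta^2\varphi''(p_\theta)\,u_\theta u_\theta^\top\,d\mu$ and your remark on when $K_n^{(\varphi)}(\theta)$ is finite under $p_\theta$ supply details the paper leaves implicit.
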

\begin{proof}
    Assume that equality holds for $\hat{r}(\mathrm{X})$ in $\varphi$-CRLB. From  Theorem~\ref{thm:Phi_CRLB proof by cauchy-schwarz}, we can write
\begin{equation}
\label{reverse equality condition}
   \hat{r}(\mathrm{x})-\mathbb{E}_{\varphi,\theta}[\hat{r}(\mathrm{X})]=J(\theta) [G_n^{(\varphi)}(\theta)]^{-1} p_{\theta}(\mathrm{x})\varphi''\big(p_{\theta}(\mathrm{x})\big)u_{\theta}(\mathrm{x}).
\end{equation}
Taking covariance on both sides of \eqref{reverse equality condition} with respect to $p_{\theta}$, we get
     \begin{align*}
       V_{\theta}[\hat{r}(\mathrm{X})]
        &=J(\theta)[G_n^{(\varphi)}(\theta)]^{-1} V_{\theta}\left[ p_{\theta}(\mathrm{X})\varphi''\big(p_{\theta}(\mathrm{X})\big)u_{\theta}(\mathrm{X})\right]\left[(G_n^{(\varphi)}(\theta))^{-1}\right]^{\top}J(\theta)^\top\\
        &=J(\theta)[G_{n}^{(\varphi)}(\theta)]^{-1} K_{n}^{(\varphi)}(\theta)[G_{n}^{(\varphi)}(\theta)]^{-1}J(\theta)^{\top},
    \end{align*}
    since $G_n^{(\varphi)}(\theta)$ is symmetric.
    Thus,
    \begin{equation*}
        V_{\theta}[\hat{r}(\mathrm{X})] =J(\theta)[I_{n}^{(\varphi)}(\theta)]^{-1}J(\theta)^{\top}.\nonumber
    \end{equation*}
\end{proof}
\begin{remark}
    For $\varphi(t)=t\log t -t+1$, $I_{n}^{(\varphi)}(\theta)$ reduces to the Fisher information matrix $G_n(\theta)$.
\end{remark}
\begin{remark} The above result is analogous to the fact that the covariance matrix of the estimator that attains the usual Cram\'er-Rao inequality for the unknown parameter $\theta$ is equal to the asymptotic covariance matrix of the MLE evaluated for the joint observation vector $(X_1,\ldots,X_n)$.
\end{remark}
\begin{remark}
     With $\varphi(t)=\varphi_{\alpha}(t)$ as in \eqref{eqn:varphi_function} in Theorem~\ref{thm:Bregman_assym_variance}, we get the $\alpha$-version. If equality holds for $\hat{r}(\mathrm{X})$ in the $\alpha$-CRLB \eqref{eqn:alpha_CRLB_ext}, then
\begin{align}
\label{eqn:Basu_assym_variance}
   V_{\theta}[\hat{r}(\mathrm{X})] =J(\theta) [{I^{(\alpha)}_{n}}(\theta)]^{-1}J(\theta)^{\top},
\end{align}where $[{I^{(\alpha)}_{n}}(\theta)]^{-1}$ is the asymptotic covariance matrix of the BHHJ estimator evaluated for $(X_1,\ldots,X_n)$,\cite{BasuHarris1998}, \cite{basu2011statistical},
where 
$$
[{I^{(\alpha)}_{n}}(\theta)]^{-1} = [G_n^{(\alpha)}(\theta)]^{-1}K_n^{(\alpha)}(\theta) [G_n^{(\alpha)}(\theta)]^{-1},
$$
 with
\begin{align*}
    &G_n^{(\alpha)}(\theta)\coloneqq \mathbb{E}_{\theta}\Big[p_{\theta}(\mathrm{X})^{\alpha}u_{\theta}(\mathrm{X}) u_{\theta}(\mathrm{X})^\top\Big],\quad K_n^{(\alpha)}(\theta)\coloneqq V_{\theta}\Big[p_{\theta}(\mathrm{X})^{\alpha}u_{\theta}(\mathrm{X})\Big].
\end{align*}
\end{remark}
The following examples illustrate the $\alpha$-case.
\begin{example}
   In Example~\ref{exm:optimal_estimator_example_sample_mean}, we observed that $\hat{r}(\mathrm{X})=\overline{X}$ attains the $\alpha$-CRLB for $\theta=\mu$. Now we will show that \eqref{eqn:Basu_assym_variance} holds. We can write $p_{\mu}(\mathrm{x})$ as:
   \begin{equation*}
       p_{\theta}(\mathrm{x})=\left[N(\mathrm{x})+F(\theta)+\omega(\theta)\hat{r}(\mathrm{x})\right]^{1/\alpha},
   \end{equation*}where $\alpha$, $N$, $F$, $\omega$ are as in the Example~\ref{exm:optimal_estimator_example_sample_mean}.
We calculate the following:
\begin{align*}
    V_{\theta}[\hat{r}(\mathrm{X})]=V_{\mu}\Big[\sum_{i=1}^nX_i/n\Big]=\frac{1}{n^2}\Big[\sum_{i=1}^nV_{\mu}(X_i)+2\sum_{1\leq i<j\leq n}\operatorname{Cov}_{\mu}(X_i,X_j)\Big]\overset{(a)}{=}\frac{\nu}{n(\nu-2)},
\end{align*}(a) follows from the fact that $V_{\mu}(\mathrm{X})=\frac{\nu}{\nu-2}\mathbb{I}_n$, which implies that $\operatorname{Cov}_{\mu}(X_i,X_j)=0$ for $i\neq j$.
\begin{align*}
    \mathbb{E}_{\theta}&\left[p_{\theta}(\mathrm{X})^{2\alpha}\big\{u_{\theta}(\mathrm{X})\big\}^2\right]=\int p_{\theta}(\mathrm{x})^{2\alpha+1}\left\{u_{\theta}(\mathrm{x})\right\}^2d\mathrm{x}=\int p_{\theta}(\mathrm{x})^{2\alpha-1}\left\{\frac{\partial}{\partial \theta}p_{\theta}(\mathrm{x})\right\}^2d\mathrm{x}\\
    &=\int p_{\theta}(\mathrm{x})^{2\alpha-1}\left\{\frac{1}{\alpha}[N(\mathrm{x})+F(\theta)+\omega(\theta)\hat{r}(\mathrm{x})]^{\frac{1}{\alpha}-1}[F'(\theta)+\omega'(\theta)\hat{r}(\mathrm{x})]\right\}^2d\mathrm{x}\\
    &=\frac{1}{\alpha^2}\int p_{\theta}(\mathrm{x})^{2\alpha-1}\left\{\frac{p_{\theta}(\mathrm{x})}{p_{\theta}(\mathrm{x})^{\alpha}}[F'(\theta)+\omega'(\theta)\hat{r}(\mathrm{x})]\right\}^2d\mathrm{x}\\
    &=\frac{1}{\alpha^2}\int p_{\theta}(\mathrm{x})[F'(\theta)+\omega'(\theta)\hat{r}(\mathrm{x})]^2d\mathrm{x},
\end{align*} and 
\begin{align*}
      \mathbb{E}_{\theta}\big[p_{\theta}(\mathrm{X})^{\alpha}u_{\theta}(\mathrm{X})\big]&=\int p_{\theta}(\mathrm{x})^{1+\alpha}u_{\theta}(\mathrm{x}) d\mathrm{x}=\int p_{\theta}(\mathrm{x})^{\alpha}\frac{\partial}{\partial \theta}p_{\theta}(\mathrm{x})d\mathrm{x}\\
    &=\int p_{\theta}(\mathrm{x})^{\alpha}\frac{1}{\alpha}[N(\mathrm{x})+F(\theta)+\omega(\theta)\hat{r}(\mathrm{x})]^{\frac{1}{\alpha}-1}[F'(\theta)+\omega'(\theta)\hat{r}(\mathrm{x})]d\mathrm{x}\\
    &=\frac{1}{\alpha}\int p_{\theta}(\mathrm{x})^{\alpha}\frac{p_{\theta}(\mathrm{x})}{p_{\theta}(\mathrm{x})^{\alpha}}[F'(\theta)+\omega'(\theta)\hat{r}(\mathrm{x})]d\mathrm{x}\\
    &=\frac{1}{\alpha}\int p_{\theta}(\mathrm{x})[F'(\theta)+\omega'(\theta)\hat{r}(\mathrm{x})]d\mathrm{x}.
\end{align*}
Hence \begin{align*}
    V_{\theta}\big[p_{\theta}(\mathrm{X})^{\alpha}u_{\theta}(\mathrm{X})\big]&=\frac{1}{\alpha^2}\int p_{\theta}(\mathrm{x})[F'(\theta)+\omega'(\theta)\hat{r}(\mathrm{x})]^2d\mathrm{x}-\left\{\frac{1}{\alpha}\int p_{\theta}(\mathrm{x})[F'(\theta)+\omega'(\theta)\hat{r}(\mathrm{x})]d\mathrm{x}\right\}^2\\
    &=\frac{1}{\alpha^2}V_{\theta}[F'(\theta)+\omega'(\theta)\hat{r}(\mathrm{x})]=\frac{1}{\alpha^2}\omega'(\theta)^2 V_{\theta}[\hat{r}(\mathrm{X})].
\end{align*} Now we calculate
\begin{align*}
    \mathbb{E}_{\theta}&\left[p_{\theta}(\mathrm{X})^{\alpha}\left\{u_{\theta}(\mathrm{X})\right\}^2\right]=\int p_{\theta}(\mathrm{x})^{1+\alpha}\left\{\frac{1}{p_{\theta}(\mathrm{x})}\frac{\partial }{\partial \theta}p_{\theta}(\mathrm{x})\right\}u_{\theta}(\mathrm{x})d\mathrm{x}\\
    &=\int p_{\theta}(\mathrm{x})^{\alpha}\frac{1}{\alpha}[N(\mathrm{x})+F(\theta)+\omega(\theta)\hat{r}(\mathrm{x})]^{\frac{1}{\alpha}-1}[F'(\theta)+\omega'(\theta)\hat{r}(\mathrm{x})]u_{\theta}(\mathrm{x})d\mathrm{x}\\
    &=\frac{1}{\alpha}\int p_{\theta}(\mathrm{x})^{\alpha}\frac{p_{\theta}(\mathrm{x})}{p_{\theta}(\mathrm{x})^{\alpha}}[F'(\theta)+\omega'(\theta)\hat{r}(\mathrm{x})]u_{\theta}(\mathrm{x})d\mathrm{x}\\
    &=\frac{1}{\alpha}\int F'(\theta)u_{\theta}(\mathrm{x})p_{\theta}(\mathrm{x})d\mathrm{x}+\frac{1}{\alpha}\int \omega'(\theta)\hat{r}(\mathrm{x})u_{\theta}(\mathrm{x})p_{\theta}(\mathrm{x})d\mathrm{x}\\
    &=\frac{1}{\alpha}F'(\theta)\int u_{\theta}(\mathrm{x})p_{\theta}(\mathrm{x})d\mathrm{x}+\frac{1}{\alpha}\omega'(\theta)\int \hat{r}(\mathrm{x})\frac{\partial}{\partial \theta}p_{\theta}(\mathrm{x})d\mathrm{x}\\
    &=\frac{1}{\alpha}\omega'(\theta)\frac{\partial \mathbb{E}_{\theta}[\hat{r}(\mathrm{X})]}{\partial \theta}.
\end{align*}
Thus we get 
\begin{align*}
    \frac{  V_{\theta}\big[p_{\theta}(\mathrm{X})^{\alpha}u_{\theta}(\mathrm{X})\big]}{\left(  \mathbb{E}_{\theta}\left[p_{\theta}(\mathrm{X})^{\alpha}\big\{u_{\theta}(\mathrm{X})\big\}^2\right]\right)^{2}}=\frac{\omega'(\theta)^2 V_{\theta}[\hat{r}]}{\Big(\omega'(\theta)\frac{\partial \mathbb{E}_{\theta}[\hat{r}(\mathrm{X})]}{\partial \theta}\Big)^2}=\frac{V_{\theta}[\hat{r}]}{\Big(\frac{\partial \mathbb{E}_{\theta}[\hat{r}(\mathrm{X})]}{\partial \theta}\Big)^2}.
\end{align*}
\end{example}
 \begin{example}
     In Example~\ref{exm:equality}, we saw that $\hat{r}(\mathrm{X})=3X$ attains the $\alpha$-CRLB for $\alpha=1$ and $n=1$. Here, we will verify the result \eqref{eqn:Basu_assym_variance} for this estimator. We calculate the following:
     \begin{equation}
         \mathbb{E}_{\theta}[(3X)^2]=(3\theta^2)/2.\nonumber
     \end{equation} 
     So, we get
     \begin{equation}
       V_{\theta}[3X]=\theta^2/2.  \nonumber
     \end{equation}
     Also
     \begin{align*}
         K_1^{(\alpha)}(\theta)&=V_{\theta}\left[\frac{\partial}{\partial \theta}p_{\theta}(X)\right]=V_{\theta}\left[\frac{4X}{\theta^3}-\frac{2}{\theta^2}\right]\\
         &=\frac{16}{9\theta^6}V_{\theta}[3X]=8/(9\theta^4).
     \end{align*}
     So, $[I_1^{(\alpha)}(\theta)]^{-1}=K_1^{(\alpha)}(\theta)/[G_1^{(\alpha)}(\theta)]^2=\frac{8}{9\theta^4}/\big(\frac{4}{3\theta^3}\big)^2=\theta^2/2.$\\
     Hence $V_{\theta}[3X]=[I_1^{(\alpha)}(\theta)]^{-1}$.
 \end{example}
 
 \begin{remark}\label{remark:loss_efficiency}
  Observe that by Matrix Cauchy-Schwarz inequality \begin{equation*}
    V_{\theta}[u_{\theta}(\mathrm{X})]\succeq \operatorname{Cov}_{\theta}\left[\frac{\partial}{\partial\theta}\varphi'(p_{\theta}(\mathrm{X})),u_{\theta}(\mathrm{X})\right]\left(V_{\theta}\left[\frac{\partial}{\partial\theta}\varphi'(p_{\theta}(\mathrm{X}))\right]\right)^{-1}\operatorname{Cov}_{\theta}\left[\frac{\partial}{\partial\theta}\varphi'(p_{\theta}(\mathrm{X})),u_{\theta}(\mathrm{X})\right].
  \end{equation*}This implies 
  \begin{equation*}
      [I_{n}^{(\varphi)}(\theta)]^{-1}\succeq [G_{n}(\theta)]^{-1}.
  \end{equation*}
Thus, the estimator attaining the $\varphi$-CRLB is generally less efficient than the one attaining the usual CRLB.
\end{remark}
We now present the following theorem, which extends the result on the attainability of the usual CRLB by the sufficient statistics of the exponential family.
  \begin{theorem}\label{thm:sufficiency}
      Suppose $p_{\theta}$ forms a $\mathbb{B}^{(\varphi)}$-family as in \eqref{eqn:vector_phi_phi family_equality} with $C(\mathrm{x})=C\in\mathbb{R}$ for all $\mathrm{x}\in\mathcal{X}$. Then \(\hat{r}(\mathrm{X})\) is a sufficient statistic for \(\theta\). Moreover, if $r(\theta)=\mathbb{E}_{\theta}[\hat{r}(\mathrm{X})],$ then \(\hat{r}(\mathrm{X})\) attains the \(\varphi\)-CRLB for the parameter \(r(\theta)\) and hence has variance equal to \eqref{eqn:Bregman_assym_variance}. 
  \end{theorem}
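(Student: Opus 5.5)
The plan has three steps: prove sufficiency with the Factorization Theorem, prove attainment with the converse half of Theorem~\ref{prop:phi equality family}, and then read off the variance from Theorem~\ref{thm:Bregman_assym_variance}.

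\emph{Sufficiency.} With $C(\mathrm{x})\equiv C$ we have
\[
p_\theta(\mathrm{x})=(\varphi')^{-1}\bigl[C+Z(\theta)+\lambda(\theta)^\top\hat r(\mathrm{x})\bigr]=g_\theta\bigl(\hat r(\mathrm{x})\bigr)\cdot h(\mathrm{x}),
\qquad
g_\theta(t)\coloneqq(\varphi')^{-1}\bigl[C+Z(\theta)+\lambda(\theta)^\top t\bigr],\quad h\equiv 1 .
\]
Since $\varphi$ is strictly convex, $\varphi'$ is strictly increasing, so $(\varphi')^{-1}$ is a well-defined function on the range of $\varphi'$. The density therefore depends on $\mathrm{x}$ only through $\hat r(\mathrm{x})$. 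By the Factorization Theorem \cite[Th.~6.2.6]{casella2024statistical}, $\hat r(\mathrm{X})$ is sufficient for $\theta$.

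\emph{Attainment.} Here I would use the converse direction of Theorem~\ref{prop:phi equality family}. Its proof shows that representation \eqref{eqn:vector_phi_phi family_equality} implies the equality condition \eqref{eqn:equality_condition_phi_crlb}, and that direction places no restriction on the form of $C(\mathrm{x})$, so a constant $C$ is allowed. The chain of steps is:
\begin{itemize}
\item differentiate $\varphi'(p_\theta)$ to get \eqref{eqn:derivative_of_phi_function};
\item use $\mathbb{E}_\theta[u_\theta]=0$, which holds by condition I, to obtain $Z'=-\lambda'\,\mathbb{E}_{\varphi,\theta}[\hat r]$;
\item compute $G_n^{(\varphi)}=\lambda'\,J$ as in \eqref{eqn:Phi_fisher_inf_&_expon_family}, which uses condition II;
\item substitute back to recover \eqref{eqn:equality_condition_phi_crlb}.
\end{itemize}
In the vector case, $\lambda'$ is replaced by the Jacobian $R(\theta)$ and the identity becomes $G_n^{(\varphi)}=R(\theta)^\top J(\theta)$, so $[R(\theta)^\top]^{-1}=J(\theta)[G_n^{(\varphi)}(\theta)]^{-1}$. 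Theorem~\ref{thm:Phi_CRLB proof by cauchy-schwarz} then gives that $\hat r(\mathrm{X})$ attains the $\varphi$-CRLB for $r(\theta)=\mathbb{E}_\theta[\hat r(\mathrm{X})]$.

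\emph{Variance.} Theorem~\ref{thm:Bregman_assym_variance} then applies directly and yields $V_\theta[\hat r(\mathrm{X})]=J(\theta)[I_n^{(\varphi)}(\theta)]^{-1}J(\theta)^\top$.

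The main obstacle is the set of hypotheses needed to invoke these results. The statement implicitly requires:
\begin{itemize}
\item regularity conditions I--VII, in particular square integrability and positive definiteness of $G_n^{(\varphi)}$;
\item invertibility of $R(\theta)$;
\item square integrability of $\hat r-r(\theta)$ under $p_\theta$, which Theorem~\ref{thm:Bregman_assym_variance} needs.
\end{itemize}
I would state these as standing assumptions inherited from Theorems~\ref{prop:phi equality family} and~\ref{thm:Bregman_assym_variance}. The one genuinely vector-valued step is the computation of $G_n^{(\varphi)}$, where one must track transposes. I would carry it out as
\[
G_n^{(\varphi)}=\int \frac{\partial\varphi'(p_\theta)}{\partial\theta}\,\frac{\partial p_\theta}{\partial\theta}^{\!\top}d\mu,
\qquad
\frac{\partial\varphi'(p_\theta)}{\partial\theta}=\nabla Z+R^\top\hat r .
\]
The $\nabla Z$ term integrates to zero against $\partial p_\theta/\partial\theta$ by condition I, and the remaining term gives $R^\top\int\hat r\,(\partial p_\theta/\partial\theta)^\top d\mu=R^\top J$ by condition II. This confirms $G_n^{(\varphi)}=R^\top J$ and shows that $J$ is invertible.
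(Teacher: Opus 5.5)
Your proposal is correct and follows the paper's proof. Sufficiency comes from factorization with $h\equiv 1$, attainment from the converse direction of Theorem~\ref{prop:phi equality family}, and the covariance identity from Theorem~\ref{thm:Bregman_assym_variance}. Your explicit vector-parameter computation $G_n^{(\varphi)}(\theta)=R(\theta)^\top J(\theta)$ fills in a step the paper leaves implicit, since it writes the converse only for $k=1$. So does your list of inherited hypotheses: conditions I--VII, invertibility of $R(\theta)$, and square integrability of $\hat r-r(\theta)$.
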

  \begin{proof}
      We can express   
      \begin{equation}\label{eqn:suff_phi_family}
          p_{\theta}(\mathrm{x})=g(\hat{r}(\mathrm{x});\theta)h(\mathrm{x}),
      \end{equation} where $g(\hat{r}(\mathrm{x});\theta)=(\varphi')^{-1}\!\left[C+Z(\theta)+\lambda(\theta)^\top\hat{r}(\mathrm{x})\right]$ and $h(\mathrm{x})=1$. Therefore, by Factorization Theorem \cite[Th.~6.2.6]{casella2024statistical}, $\hat{r}(\mathrm{X})$ is a sufficient statistic for $\theta$. By Theorem~\ref{prop:phi equality family}, $\hat{r}(\mathrm{X})$ attains the $\varphi$-CRLB for the parameter $r(\theta)$. The last conclusion follows from Theorem \ref{thm:Bregman_assym_variance}.
  \end{proof}
  \begin{remark}\label{remark:exponential_sufficient_estimator}
  For $\varphi(t)=t\log t-t+1$, $p_{\theta}$ in Theorem \ref{thm:sufficiency} forms the following exponential family \begin{equation}\label{eqn:std_exponential_family}
          p_{\theta}(\mathrm{x})=\operatorname{exp}[C+Z(\theta)+\lambda(\theta)^{\top}\hat{r}(\mathrm{x})]\qquad \text{for }\mathrm{x}\in \mathcal{X}.
      \end{equation} In this case, $\hat{r}(\mathrm{X})$ is a sufficient statistic for $\theta$ and it attains the usual CRLB for its expected value \cite[eq.~(3.13)]{crrao1945}. 
     Thus, Theorem~\ref{thm:sufficiency} extends the connection between sufficiency and the usual CRLB attainment from the exponential family to the more general $\mathbb{B}^{(\varphi)}$-family.
      \end{remark}
\begin{remark}\label{remark:expo_sufff}
   With $\varphi(t)=\varphi_{\alpha}(t)$ as in \eqref{eqn:varphi_function} in Theorem~\ref{thm:sufficiency}, we get,
    \begin{equation}\label{eqn:std_B_alpha_family}
        p_{\theta}(\mathrm{x})=[f(\theta)+\omega(\theta)^{\top}\hat{r}(\mathrm{x})]^{1/\alpha}\quad \text{for }\mathrm{x}\in\mathcal{X},
    \end{equation}where $f(\theta)=1+\alpha C+\alpha Z(\theta)$, $\omega(\theta)=\alpha\lambda(\theta)$ and
$\hat{r}(\mathrm{X})$ is a sufficient statistic for $\theta$ and attains the $\alpha$-CRLB.
\end{remark}  
\begin{example}\label{exm:optimal_estimator_example_sample_variance}
 Let $\mathrm{X}=(X_1,\ldots, X_n)$ be a sample from multivariate Student t-distribution as in \eqref{eqn:mult_t_dist}. It forms a $B^{(\alpha)}$- family; see \eqref{Student_t_alpha_family} and $T(\mathrm{X})$ is sufficient for $\theta=[\mu,\sigma^2]^{\top}$. Consider $\hat{r}(\mathrm{X})=T(\mathrm{X})$. Note that $\mathbb{E}_{\mu,\sigma^2}[T(\mathrm{X})]=\left[\mu,\mu^2+\frac{\nu}{(\nu-2)}\sigma^2\right]^{\top}$. By Remark~\ref{remark:eq_alpha_crlb_family}, $T(\mathrm{X})$ attains the $\alpha$-CRLB for the parameter $\left[\mu,\mu^2+\frac{\nu}{(\nu-2)}\sigma^2\right]^{\top}$ with $\alpha=-\frac{2}{\nu+n}$.
\end{example}
\begin{remark}
   For exponential family \eqref{eqn:std_exponential_family}, $\hat{r}(\mathrm{X})$ is sufficient for $\theta$, and its covariance matrix attains the usual CRLB. However, for the $\mathbb{B}^{(\varphi)}$-family (other than $\varphi(t)=t\log t-t+1$), covariance matrix of the sufficient statistic $\hat{r}(\mathrm{X})$ does not attain the usual CRLB (see Remark~\ref{remark:loss_efficiency}); rather, it attains the $\varphi$-CRLB.
\end{remark}

\end{section}
\section{Summary and Concluding Remarks}\label{sec:concluding remarks}

In this paper, we derived a generalized Cram\'er-Rao inequality from Bregman divergence. The derivation is based on the information-geometric approach of Amari and Nagaoka, together with the Riemannian metric induced by a Bregman divergence through Eguchi's theory. We refer to the resulting bound as the $\varphi$-CRLB. The $\alpha$-CRLB is obtained as a special case by considering the BHHJ divergence, which belongs to the class of Bregman divergences. We have also provided a direct proof of the $\varphi$-CRLB based on the Cauchy--Schwarz inequality. This latter approach enables us to extend the bound from finite-support models to models with countably infinite or continuous support. The $\varphi$-CRLB is closely related to the asymptotic covariance matrix of the MBDE.
We showed that the estimator that attains the $\varphi$-CRLB has covariance matrix equal to the asymptotic covariance matrix of MBDE evaluated for the joint distribution of $(X_1,\dots,X_n)$(Theorem \ref{thm:Bregman_assym_variance}). We also showed that the sufficient statistics of the $\mathbb{B}^{(\varphi)}$-family attain the $\varphi$-CRLB and as a consequence has covariance matrix equal to the asymptotic covariance matrix of MBDE  evaluated for the joint distribution of $(X_1,\dots,X_n)$ (Theorem \ref{thm:sufficiency}).


 The equality condition for the $\varphi$-CRLB (Theorem \ref{prop:phi equality family}) also reveals an interesting distinction between general $B^{(\varphi)}$ families and i.i.d. models. We showed that equality holds when the joint distribution of the observations belongs to a $B^{(\varphi)}$-family. On the other hand, if the marginal distribution of each observation belongs to a $B^{(\varphi)}$ family and the observations are assumed to be i.i.d., the resulting joint distribution need not belong to the (same) $B^{(\varphi)}$ family (except for $\varphi(t) = t\log t - t + 1$). Consequently, equality in the $\varphi$-CRLB does not, in general, follow merely from the fact that the marginal distributions belong to a $B^{(\varphi)}$ family. This distinction highlights the role of the structure of the joint distribution in the attainment of the bound. This is in contrast to the case of equality in the usual CRLB and the exponential family.
We also note a connection between the present work and the results of Naudts \cite{naudts2004estimators}, who studied estimators, escort probabilities, and exponential families in the context of statistical physics. The $\varphi$-CRLB considered here may be related to the framework developed by Naudts by taking the distribution $P_{\theta}$ appearing in his formulation to be the $\varphi$-escort distribution associated with $p_{\theta}$ in the present work. This connection provides an alternative perspective on the bound and places it within the broader framework of generalized exponential families and escort distributions. We mention this connection also to clarify that the present result is related to, but formulated from a completely different perspective than, the corresponding result in \cite{naudts2004estimators}. 

\section{Acknowledgements}
Satyajit Dhadumia is supported by a Council of Scientific \& Industrial Research (CSIR) fellowship of the Department of Scientific \& Industrial Research, Ministry of Science and Technology, Government of India. The work was also supported in part by the Science \& Engineering Research Board (SERB) of the Department of Science and Technology, India, MATRICS grant (No. MTR/2022/000922).

\bibliographystyle{plain}
    \bibliography{ref}

\section*{Appendix (Supplementary Material)}
\addcontentsline{toc}{section}{Appendix}  
\section*{A. Details of Remark~\ref{rem:phi_family_phi_escort_relation}}
Let $\phi$ be an increasing function on $[0,\infty)$, strictly positive on $(0,\infty)$. Now define the following
    \begin{equation}\label{eqn:log_function}
    \log_{\phi}(t)\coloneqq\int_{1}^t \frac{1}{\phi(y)}dy,\quad t>0.
\end{equation}Denote $F_{\phi}(t)=\int_{1}^t\log_{\phi}(y)dy$. Now, consider the  strictly convex twice differentiable function $\varphi(t)=F_{\phi}(t)-(1-t)F_{\phi}(0)$, $t\in (0,\infty)$. Observe that 
\begin{align*}
    \varphi'(t)=\log_{\phi}(t)+F_{\phi}(0).
\end{align*}
Hence,
\begin{equation}\label{eqn:gen_phi_family_under_phi}
    (\varphi')^{-1}(t)=\operatorname{exp}_{\phi}[t-F_{\phi}(0)].
\end{equation}\\
Thus, under this $\varphi$, we can write \eqref{eqn:phi_family} as follows
\begin{equation}\label{eqn:phi_escort_and_phi_family}
    p_{\theta}(\mathrm{x})=\operatorname{exp}_{\phi}\left[C(\mathrm{x})+Z(\theta)+\lambda(\theta)^\top T(\mathrm{x})-F_{\phi}(0)\right].
\end{equation}
Now with $C(x)=0$, $Z(\theta)=G(\theta)+F_{\phi}(0)$, $\lambda(\theta)=-\theta^k$, $T(\mathrm{x})=c_k(x)$, \eqref{eqn:gen_phi_family_under_phi} reduces to 
$$p_{\theta}(\mathrm{x})=\operatorname{exp}_{\phi}\left[G(\theta)-\theta^k c_k(x)\right],$$ which is the $\phi$-exponential family.
\section*{B. Proof of Theorem~\ref{thm:var_sq_norm} }
    For every $X\in T_{p}(\mathcal{P})$, we have
\[
X\big(\mathbb{E}[A]\big) = \big\langle(\text{grad}\,\mathbb{E}[A])_p, X\big\rangle_{p}.
\]
We observe that
\begin{align*}
     X\big(\mathbb{E}[A]\big)&=\sum_{x}X^{(m)}(x)A(x).
\end{align*} 
Since $X^{(e)}(\mathrm{x})=X^{(m)}(\mathrm{x})/p(\mathrm{x}) \text{ for } \mathrm{x}\in \mathcal{X}$, we can write
$$X\big(\mathbb{E}[A]\big)=\mathbb{E}_{p}\big[ X^{(e)} A\big].$$
Since $\mathbb{E}_{p}[X^{(e)}]=0$, we can write
\begin{align}
\label{eqn:tgt_vector_to_expectation}
     X\big(\mathbb{E}[A]\big)&=\mathbb{E}_{p}\left\{A X^{(e)}-\frac{\mathbb{E}_{p}[X^{(e)}]}{\sum_{\mathrm{y}}\big[\varphi''(p(\mathrm{y}))\big]^{-1}}\mathbb{E}_{p}\left[\frac{A}{p(\mathrm{X})\varphi''\big (p(\mathrm{X})\big)}\right]\right\}\nonumber\\
                       &= \mathbb{E}_{p}\left\{AX^{(e)}-\frac{X^{(e)}}{\sum_{\mathrm{y}}\big[\varphi''(p(\mathrm{y}))\big]^{-1}}\mathbb{E}_{p}\left[\frac{A}{p(\mathrm{X})\varphi''\big (p(\mathrm{X})\big)}\right]\right\}\nonumber\\
                       &=\mathbb{E}_{p}\left\{X^{(e)}\left(A-\frac{1}{\sum_{\mathrm{y}}\big[\varphi''(p(\mathrm{y}))\big]^{-1}}\mathbb{E}_{p}\left[\frac{A}{p(\mathrm{X})\varphi''\big (p(\mathrm{X})\big)}\right]\right)\right\}\nonumber\\
                       & = \mathbb{E}_{p}\left[X^{(e)}\big(A - \mathbb{E}_{\varphi}[A]\big)\right].
\end{align}
Observe that
 \begin{align}
 \label{eqn:alpha_rep_criterion}
     &\mathbb{E}_{p}\bigg\{\frac{1}{p(\mathrm{X})\varphi''\big (p(\mathrm{X})\big)}\Big(A-\mathbb{E}_{\varphi}[A]\Big)\bigg\}\nonumber
     =\mathbb{E}_{p}\bigg\{\frac{A}{p(\mathrm{X})\varphi''\big (p(\mathrm{X})\big)} - \frac{1}{p(\mathrm{X})\varphi''\big (p(\mathrm{X})\big)}\mathbb{E}_{\varphi}[A]\bigg\}\nonumber\\
     &=\mathbb{E}_{p}\left[\frac{A}{p(\mathrm{X})\varphi''\big (p(\mathrm{X})\big)}\right] -\mathbb{E}_{p}\left[\frac{1}{p(\mathrm{X})\varphi''\big (p(\mathrm{X})\big)}\right]\cdot \mathbb{E}_{\varphi}[A]\nonumber\\
     &=\mathbb{E}_{p}\left[\frac{A}{p(\mathrm{X})\varphi''\big (p(\mathrm{X})\big)}\right]-\sum_{\mathrm{x}}\big[\varphi''(p(\mathrm{X}))\big]^{-1}\cdot\mathbb{E}_{\varphi}[A]\nonumber\\
     &=\mathbb{E}_{p}\left[\frac{A}{p(\mathrm{X})\varphi''\big (p(\mathrm{X})\big)}\right]-\frac{\sum_{\mathrm{x}}\big[\varphi''(p(\mathrm{X}))\big]^{-1}}{\sum_{\mathrm{y}}\big[\varphi''(p(\mathrm{y}))\big]^{-1}}\mathbb{E}_{p}\left[\frac{A}{p(\mathrm{X})\varphi''\big (p(\mathrm{X})\big)}\right]\nonumber\\
     &=\mathbb{E}_{p}\left[\frac{A}{p(\mathrm{X})\varphi''\big (p(\mathrm{X})\big)}\right]-\mathbb{E}_{p}\left[\frac{A}{p(\mathrm{X})\varphi''\big (p(\mathrm{X})\big)}\right]\nonumber\\
     &=0.
  \end{align}   
Thus
$$
A -\mathbb{E}_{\varphi}[A] = Y^{(\varphi)} \in T_{p}^{(\varphi)}(\mathcal{P}),
$$
for some $ Y \in T_{p}(\mathcal{P}) $. Consequently, we can write 
$$
X\big(\mathbb{E}[A]\big) = \mathbb{E}_{p}\big[X^{(e)} Y^{(\varphi)}\big].
$$
Therefore,
\begin{equation}
    \label{eqn: metric expectation}  
\mathbb{E}_{p}\big[X^{(e)} Y^{(\varphi)}\big] = \left\langle(\text{grad}\,\mathbb{E}[A])_p, X\right\rangle_{p}.
\end{equation}
Again, from the definition of the $\varphi$-Fisher information metric \eqref{eqn:phi_riemannian_metric}, we see that 
\begin{equation}
\label{eqn:Ex-Innerproduct}
\mathbb{E}_{p}\big[X^{(e)} Y^{(\varphi)}\big] = \left\langle Y, X\right\rangle_{p}, \quad \text{for any } X, Y \in T_{p}(\mathcal{P}).
\end{equation}
From \eqref{eqn: metric expectation} and \eqref{eqn:Ex-Innerproduct}, we get $Y = (\text{grad}\,\mathbb{E}[A])_p \in T_{p}(\mathcal{P}) $. Hence
\begin{align}
    \|(d\mathbb{E}[A])_{p}\|_{p}^{2}&= \left\langle(\text{grad}\,\mathbb{E}[A])_p, (\text{grad}\,\mathbb{E}[A])_p\right\rangle_{p}\nonumber\\
     &= \left\langle Y, Y\right\rangle_{p}\nonumber \\
     &= \mathbb{E}_{p}\big[Y^{(e)} Y^{(\varphi)}\big]\nonumber\\
     &= \mathbb{E}_{p}\big[Y^{(e)} \big(A -\mathbb{E}_{\varphi}[A]\big)\big].\label{eqn:star1}
\end{align}
We have
\begin{align}
\label{eqn:star0}
  Y^{(e)}(\mathrm{x})&= \frac{Y^{(\varphi)}(\mathrm{x})}{p(\mathrm{x})\varphi''(p(\mathrm{X}))}=  \frac{1}{p(\mathrm{x})\varphi''(p(\mathrm{X}))} \Big(A(\mathrm{x}) -\mathbb{E}_{\varphi}[A]\Big).
\end{align}
Using \eqref{eqn:star0} in \eqref{eqn:star1}, we get 
\begin{align}
\|(d\mathbb{E}[A])_{p}\|_{p}^{2}
&= \mathbb{E}_{p}\left[\frac{1}{p(\mathrm{X})\varphi''(p(\mathrm{X}))} \big(A -\mathbb{E}_{\varphi}[A]\big)^2\right]\nonumber \\
&= \sum_{\mathrm{y}} \big[\varphi''(p(\mathrm{y}))\big]^{-1} \mathbb{E}_{\varphi} \big[\big(A -\mathbb{E}_{\varphi}[A]\big)^2\big]\nonumber \\
& =\sum_{\mathrm{y}} \big[\varphi''(p(\mathrm{y}))\big]^{-1}  V_{\varphi}[A].\nonumber
\end{align}
Thus
\begin{equation}
    V_{\varphi}[A] = \frac{ \|(d\mathbb{E}[A])_{p}\|_{p}^{2}}{\sum_{y} \big[\varphi''\big(p(y)\big)\big]^{-1} }.\nonumber
\end{equation}

\section*{C. Proof of Proposition~\ref{prop:discrete equality condition}}

From Corollary~\ref{corollarry 1}, we get that equality in \eqref{eqn:discrete_phi-CRLB} if and only if $$\sum_{i=1}^k c_i(\hat{r}_i(\mathrm{x})-\mathbb{E}_{\varphi,\theta}[\hat{r}_i(\mathrm{X})])\in T_{p_{\theta}}^{(\varphi)}(S)\text{ for all }[c_i]\in\mathbb{R}^k.$$
That is,
\begin{equation}
    \mathbb{E}_{\theta}\bigg[\frac{1}{p_{\theta}(\mathrm{X})\varphi''(p_{\theta}(\mathrm{X}))}\sum_{i=1}^k c_i(\hat{r}_i(\mathrm{X})-\mathbb{E}_{\varphi,\theta}[\hat{r}_i(\mathrm{X})])\bigg]=0\text{ for all } [c_i]\in \mathbb{R}^k.
\end{equation}
This implies that 
$$c^\top \mathbb{E}_{\theta}\bigg[\frac{1}{p_{\theta}(\mathrm{X})\varphi''(p_{\theta}(\mathrm{X}))}(\hat{r}(\mathrm{X})-\mathbb{E}_{\varphi,\theta}[\hat{r}(\mathrm{X})])\bigg]=0.$$
The above is true for every $c\in \mathbb{R}^k$.\\ 
Hence
\begin{equation*}
     \mathbb{E}_{\theta}\bigg[\frac{1}{p_{\theta}(\mathrm{X})\varphi''(\mathrm{X})}(\hat{r}_i(\mathrm{X})-\mathbb{E}_{\varphi,\theta}[\hat{r}_i(\mathrm{X})])\bigg]=0\text{ for all }i.
\end{equation*}That is,
$$\hat{r}_i(\mathrm{x})-\mathbb{E}_{\varphi,\theta}[\hat{r}_i(\mathrm{X})]\in T_{p_{\theta}}^{(\varphi)}(S) \text{ for all }i.$$
This implies that
\begin{align*}
 \hat{r}(\mathrm{x})-\mathbb{E}_{\varphi,\theta}[\hat{r}(\mathrm{X})]
   =&
   \begin{bmatrix}
       \hat{r}_1(\mathrm{x})-\mathbb{E}_{\varphi,\theta}[\hat{r}_1(\mathrm{X})]\\ 
       \vdots\\
      \hat{r}_k(\mathrm{x})-\mathbb{E}_{\varphi,\theta}[\hat{r}_k(\mathrm{X})]
   \end{bmatrix}\\
   =&\begin{bmatrix}
       \sum_{j=1}^k m_{1j}(\theta)p_{\theta}(\mathrm{x})\varphi''\big(p_{\theta}(\mathrm{x})\big)\frac{\partial}{\partial \theta_j}\log p_{\theta}(\mathrm{x})\\
       \vdots\\
       \sum_{j=1}^k m_{kj}(\theta)p_{\theta}(\mathrm{x})\varphi''\big(p_{\theta}(\mathrm{x})\big)\frac{\partial}{\partial \theta_j}\log p_{\theta}(\mathrm{x})
   \end{bmatrix}\\
    =&\begin{bmatrix}
        m_{11}(\theta)&\hdots &m_{1k}(\theta)\\
        \vdots & \vdots& \vdots \\
        m_{k1}(\theta)&\hdots &m_{kk}(\theta)\\
    \end{bmatrix}
    \begin{bmatrix}
       p_{\theta}(\mathrm{x})\varphi''\big(p_{\theta}(\mathrm{x})\big)\frac{\partial}{\partial \theta_1}\log p_{\theta}(\mathrm{x})\\
        \vdots\\
       p_{\theta}(\mathrm{x})\varphi''\big(p_{\theta}(\mathrm{x})\big)\frac{\partial}{\partial \theta_k}\log p_{\theta}(\mathrm{x})
    \end{bmatrix}\\
    &\;\text{ for some } m_{ij}(\theta), \text{ functions of }\theta,\; \text{where } i,j=1,\dots,k\\
    =&\, m(\theta)p_{\theta}(\mathrm{x})\varphi''\big(p_{\theta}(\mathrm{x})\big)u_{\theta}(\mathrm{x}).
\end{align*} That is, 
\begin{equation}
\label{eqn:star2}
     \hat{r}(\mathrm{x})-\mathbb{E}_{\varphi,\theta}[\hat{r}(\mathrm{X})]=m(\theta)p_{\theta}(\mathrm{x})\varphi''\big(p_{\theta}(\mathrm{x})\big)u_{\theta}(\mathrm{x}),
\end{equation}
for some $k\times k$ matrix $m(\theta)$.\\
We now determine $m(\theta)$. Hence
\begin{equation*}
    \hat{r}(\mathrm{x})u_{\theta}(\mathrm{x})^{\top}-\mathbb{E}_{\varphi,\theta}[\hat{r}(\mathrm{X})]u_{\theta}(\mathrm{x})^{\top}=m(\theta)p_{\theta}(\mathrm{x})\varphi''\big(p_{\theta}(\mathrm{x})\big)u_{\theta}(\mathrm{x})u_{\theta}(\mathrm{x})^{\top}.
\end{equation*}Taking expectation with respect to $p_{\theta}$, we obtain
\begin{equation*}
    \int  \hat{r}(\mathrm{x})u_{\theta}(\mathrm{x})^{\top}p_{\theta}(\mathrm{x})d\mu(\mathrm{x})-\mathbb{E}_{\varphi,\theta}[\hat{r}(\mathrm{X})]\int u_{\theta}(\mathrm{x})^{\top}p_{\theta}(\mathrm{x})d\mu(\mathrm{x})=m(\theta)\mathbb{E}_{\theta}\left[\frac{\partial}{\partial \theta}\varphi'(p_{\theta}(\mathrm{X}))u_{\theta}(\mathrm{X})^{\top}\right].
\end{equation*}
Since $\mathbb{E}_{\theta}[u_{\theta}(\mathrm{X})]=0$, the second term on the left-hand side vanishes. Therefore, 
\begin{equation*}
    \frac{\partial \mathbb{E}_{\theta}[\hat{r}(\mathrm{X})]}{\partial \theta}=m(\theta)G_{n}^{(\varphi)}(\theta).
\end{equation*}Hence
\begin{equation*}
   J(\theta)=m(\theta)G_{n}^{(\varphi)}(\theta),
\end{equation*}
which implies, \begin{equation*}
    m(\theta)= J(\theta)[G_{n}^{(\varphi)}(\theta)]^{-1}.
\end{equation*}
Substituting this in \eqref{eqn:star2}, we obtain the equality condition \eqref{eqn:equality phi CRLB_finite support}.

\section*{D. Details of Example \ref{example 1}}\label{A1} 

 \begin{align*}
    R.H.S.&= \frac{1}{\sum_{\mathrm{y}} p_{\theta}(\mathrm{y})^{1-\alpha}} [G_n^{(\alpha)}(\theta)]^{-1}=\frac{1}{[\theta^{1-\alpha}+(1-\theta)^{1-\alpha}]^n\sum_{\mathrm{y}}p_{\theta}(\mathrm{y})^{1+\alpha}\mathbb{E}^*\left[u_{\theta}(\mathrm{X})^2\right]}\\
     &=\frac{1}{[\theta^{1-\alpha}+(1-\theta)^{1-\alpha}]^n[\theta^{1+\alpha}+(1-\theta)^{1+\alpha}]^n\mathbb{E}^*\left[u_{\theta}(\mathrm{X})^2\right]},
\end{align*}
where the expectation $\mathbb{E}^*$ is with respect to the escort distribution $p^*_{\theta}(\mathrm{x}):=p_{\theta}(\mathrm{x})^{1+\alpha}/{\sum_{\mathrm{y}}p_{\theta}(\mathrm{y})^{1+\alpha} }.$\\ \\
Observe that 
\begin{align*}
    \frac{\partial}{\partial\theta}\log p_{\theta}(x)
    =&\frac{\partial}{\partial \theta}\big[x\log\theta+(1-x)\log (1-\theta)\big]\\
    =&\frac{x}{\theta}-\frac{1-x}{1-\theta}=\frac{x-\theta}{\theta(1-\theta)},
\end{align*}
and
\begin{align*}
\mathbb{E}^*[X_1]&=\sum_{x_1=0,1}x_1\frac{\Big(\theta^{x_1}(1-\theta)^{1-x_1}\Big)^{1+\alpha}}{\sum_{x_1=0,1}\Big(\theta^{x_1}(1-\theta)^{1-x_1}\Big)^{1+\alpha}}\\
&=\frac{\theta^{1+\alpha}}{[\theta^{1+\alpha}+(1-\theta)^{1+\alpha}]}.
\end{align*}
Hence $p^*_{\theta}$ is Bernoulli with the parameter $\theta^{(\alpha)}\coloneqq\frac{\theta^{1+\alpha}}{\theta^{1+\alpha}+(1-\theta)^{1+\alpha}}$.
Note that
\begin{align*}
    \mathbb{E}^*\left[u_{\theta}(\mathrm{X})^2\right]&=\mathbb{E}^*\left[\left(\sum_{i=1}^n\frac{\partial}{\partial \theta}\log p_{\theta}(X_i)\right)^2\right]=\mathbb{E}^*\left[\left(\sum_{i=1}^n \frac{X_i-\theta}{\theta(1-\theta)}\right)^2\right]=\mathbb{E}^*\left[\left(\frac{n(\overline{X}-\theta)}{\theta(1-\theta)}\right)^2\right].
\end{align*}
That is,
\begin{align*}
     \mathbb{E}^*\left[u_{\theta}(\mathrm{X})^2\right] &=\frac{n^2}{\theta^2(1-\theta)^2}\mathbb{E}^*\Big[(\overline{X}-\theta)^2\Big]=\frac{n^2}{\theta^2(1-\theta)^2}\mathbb{E}^*\Big[(\overline{X}-\theta^{(\alpha)}+\theta^{(\alpha)}-\theta)^2\Big]\\
     &=\frac{n^2}{\theta^2(1-\theta)^2}\mathbb{E}^*\Big[(\overline{X}-\theta^{(\alpha)})^2+2(\overline{X}-\theta^{(\alpha)})(\theta^{(\alpha)}-\theta)+(\theta^{(\alpha)}-\theta)^2\Big]\\
     &=\frac{n^2}{\theta^2(1-\theta)^2}\mathbb{E}^*\Big[(\overline{X}-\theta^{(\alpha)})^2+(\theta^{(\alpha)}-\theta)^2\Big]\\
     & = \frac{n^2}{\theta^2(1-\theta)^2}\left\{\mathbb{E}^*[(\overline{X}-\theta^{(\alpha)})^2]+(\theta^{(\alpha)}-\theta)^2\right\}\\
     &=\frac{n^2}{\theta^2(1-\theta)^2}\Big\{V^*[\overline{X}]+(\theta^{(\alpha)}-\theta)^2\Big\},\text{ the variance $V^*$ is with respect to $p_{\theta}^*$}\\
    &=\frac{n^2}{\theta^2(1-\theta)^2}\left\{\frac{\theta^{(\alpha)}(1-\theta^{(\alpha)})}{n}+(\theta^{(\alpha)}-\theta)^2\right\}\\
    &=\frac{n^2}{\theta^2(1-\theta)^2}\left[\frac{\theta^{1+\alpha}(1-\theta)^{1+\alpha}}{n(\theta^{1+\alpha}+(1-\theta)^{1+\alpha})^2}+\Big(\frac{\theta^{1+\alpha}}{\theta^{1+\alpha}+(1-\theta)^{1+\alpha}}-\theta\Big)^2\right]\\
     &=\frac{n^2}{\theta^2(1-\theta)^2}\left[\frac{\theta^{1+\alpha}(1-\theta)^{1+\alpha}}{n(\theta^{1+\alpha}+(1-\theta)^{1+\alpha})^2}+\Big(\frac{\theta^{1+\alpha}-\theta(\theta^{1+\alpha}+(1-\theta)^{1+\alpha})}{\theta^{1+\alpha}+(1-\theta)^{1+\alpha}}\Big)^2\right]\\
      &=\frac{n}{\theta^2(1-\theta)^2}\left[\frac{\theta^{1+\alpha}(1-\theta)^{1+\alpha}+n\theta^{2}(1-\theta)^2(\theta^{\alpha}-(1-\theta)^{\alpha})^2}{(\theta^{1+\alpha}+(1-\theta)^{1+\alpha})^2}\right].\\
\end{align*}
\begin{align*}
    &R.H.S. =\Bigg\{(\theta^{1-\alpha}+(1-\theta)^{1-\alpha})^n(\theta^{1+\alpha}+(1-\theta)^{1+\alpha})^n\\
    &\quad \quad\quad\quad\times\frac{n}{\theta^2(1-\theta)^2}\left[\frac{\theta^{1+\alpha}(1-\theta)^{1+\alpha}+n\theta^{2}(1-\theta)^2(\theta^{\alpha}-(1-\theta)^{\alpha})^2}{(\theta^{1+\alpha}+(1-\theta)^{1+\alpha})^2}\right]\Bigg\}^{-1}\\
    &=\frac{\theta^2(1-\theta)^2(\theta^{1+\alpha}+(1-\theta)^{1+\alpha})^2}{n(\theta^{1-\alpha}+(1-\theta)^{1-\alpha})^n(\theta^{1+\alpha}+(1-\theta)^{1+\alpha})^n[\theta^{1+\alpha}(1-\theta)^{1+\alpha}+n\theta^{2}(1-\theta)^2(\theta^{\alpha}-(1-\theta)^{\alpha})^2]}.\\ \\
   & L.H.S.=V_{\alpha,\theta}[\overline{X}]=V_{\alpha,\theta}\left[\sum_{i=1}^n X_{i}\Big/n\right]=\frac{1}{n}V_{\alpha,\theta}[X_1]\\
    &=\frac{1}{n}\cdot\frac{\theta^{1-\alpha}}{\theta^{1-\alpha}+(1-\theta)^{1-\alpha}}\cdot\frac{(1-\theta)^{1-\alpha}}{\theta^{1-\alpha}+(1-\theta)^{1-\alpha}}=\frac{\theta^{1-\alpha}(1-\theta)^{1-\alpha}}{n(\theta^{1-\alpha}+(1-\theta)^{1-\alpha})^2},
\end{align*}as $p_{\alpha,\theta}$ is Bernoulli with parameter $\theta^{(-\alpha)}\coloneqq\frac{\theta^{1-\alpha}}{\theta^{1-\alpha}(1-\theta)^{1-\alpha}}$.\\
We need to verify if $L.H.S.> R.H.S.$ for $\alpha\neq0$, that is, 
\begin{align*}
   & \frac{\theta^{1-\alpha}(1-\theta)^{1-\alpha}}{n(\theta^{1-\alpha}+(1-\theta)^{1-\alpha})^2}\\
   & > \frac{\theta^2(1-\theta)^2(\theta^{1+\alpha}+(1-\theta)^{1+\alpha})^2}{n(\theta^{1-\alpha}+(1-\theta)^{1-\alpha})^n(\theta^{1+\alpha}+(1-\theta)^{1+\alpha})^n[\theta^{1+\alpha}(1-\theta)^{1+\alpha}+n\theta^{2}(1-\theta)^2(\theta^{\alpha}-(1-\theta)^{\alpha})^2]}.
\end{align*}
That is, 
\begin{align*}
    &\frac{\theta^{1-\alpha}(1-\theta)^{1-\alpha}}{(\theta^{1-\alpha}+(1-\theta)^{1-\alpha})^2}\\
    &> \frac{\theta(1-\theta)(\theta^{1+\alpha}+(1-\theta)^{1+\alpha})^2}{(\theta^{1-\alpha}+(1-\theta)^{1-\alpha})^n(\theta^{1+\alpha}+(1-\theta)^{1+\alpha})^n[\theta^{\alpha}(1-\theta)^{\alpha}+n\theta(1-\theta)(\theta^{\alpha}-(1-\theta)^{\alpha})^2]}.
\end{align*}
That is,
\begin{align*}
    &\frac{\theta^{-\alpha}(1-\theta)^{-\alpha}}{(\theta^{1-\alpha}+(1-\theta)^{1-\alpha})^2}\\
     &>\frac{(\theta^{1+\alpha}+(1-\theta)^{1+\alpha})^2}{(\theta^{1-\alpha}+(1-\theta)^{1-\alpha})^n(\theta^{1+\alpha}+(1-\theta)^{1+\alpha})^n[\theta^{\alpha}(1-\theta)^{\alpha}+n\theta(1-\theta)(\theta^{\alpha}-(1-\theta)^{\alpha})^2]}.
\end{align*}
Observe that 
\begin{align*}
   0< \theta^{\alpha}(1-\theta)^{\alpha}&\leq\theta^{\alpha}(1-\theta)^{\alpha}+n\theta(1-\theta)(\theta^{\alpha}-(1-\theta)^{\alpha})^2.
    \end{align*}
    This implies
    \begin{align*}
 \frac{1}{\theta^{\alpha}(1-\theta)^{\alpha}}&\geq \frac{1}{\theta^{\alpha}(1-\theta)^{\alpha}+n\theta(1-\theta)(\theta^{\alpha}-(1-\theta)^{\alpha})^2}>0.
\end{align*}
That is,
\begin{equation}
    \label{e1}
    \theta^{-\alpha}(1-\theta)^{-\alpha}\geq\frac{1}{\theta^{\alpha}(1-\theta)^{\alpha}+n\theta(1-\theta)(\theta^{\alpha}-(1-\theta)^{\alpha})^2}>0.
\end{equation}
Observe that
\begin{align*}
    &(\theta^{1-\alpha}+(1-\theta)^{1-\alpha})(\theta^{1+\alpha}+(1-\theta)^{1+\alpha})\\
    &=\theta^2+\theta^{1-\alpha}(1-\theta)^{1+\alpha}+(1-\theta)^2+\theta^{1+\alpha}(1-\theta)^{1-\alpha}\\
     &=\theta^2+(1-\theta)^2+\theta(1-\theta)(\theta^{-\alpha}(1-\theta)^{\alpha}+\theta^{\alpha}(1-\theta)^{-\alpha})\\
     &=\theta^2+(1-\theta)^2+2\theta(1-\theta)-2\theta(1-\theta)+\theta(1-\theta)(\theta^{-\alpha}(1-\theta)^{\alpha}+\theta^{\alpha}(1-\theta)^{-\alpha})\\
     &=\theta^2+(1-\theta)^2+2\theta(1-\theta)+\theta(1-\theta)\left[\frac{(1-\theta)^{\alpha}}{\theta^{\alpha}}+\frac{\theta^{\alpha}}{(1-\theta)^{\alpha}}-2\right]\\
      &=\theta^2+(1-\theta)^2+2\theta(1-\theta)+\theta(1-\theta)\left[\frac{\theta^{2\alpha}+(1-\theta)^{2\alpha}-2\theta^{\alpha}(1-\theta)^{\alpha}}{\theta^{\alpha}(1-\theta)^{\alpha}}\right]\\
      &=(\theta+(1-\theta))^2+\theta(1-\theta)\frac{(\theta^{\alpha}-(1-\theta)^{\alpha})^2}{\theta^{\alpha}(1-\theta)^{\alpha}}\\
      &=1+\theta(1-\theta)\frac{(\theta^{\alpha}-(1-\theta)^{\alpha})^2}{\theta^{\alpha}(1-\theta)^{\alpha}}>1.
\end{align*}
Hence
\begin{align*}
     \Big[\Big(\theta^{1-\alpha}+(1-\theta)^{1-\alpha}\Big)&\Big(\theta^{1+\alpha}+(1-\theta)^{1+\alpha}\Big)\Big]^n\\ &>\Big[\Big(\theta^{1-\alpha}+(1-\theta)^{1-\alpha}\Big)\Big(\theta^{1+\alpha}+(1-\theta)^{1+\alpha}\Big)\Big]^2
     >0.
\end{align*}
This implies
\begin{equation}
  \label{e2}
  \frac{1}{(\theta^{1-\alpha}+(1-\theta)^{1-\alpha})^2}> \frac{\Big(\theta^{1+\alpha}+(1-\theta)^{1+\alpha}\Big)^2}{\Big(\theta^{1-\alpha}+(1-\theta)^{1-\alpha}\Big)^n\Big(\theta^{1+\alpha}+(1-\theta)^{1+\alpha}\Big)^n}>0.
\end{equation}
From \eqref{e1} and \eqref{e2}, we conclude $L.H.S.> R.H.S.$.\\

\section*{E. Details of Example~\ref{exm:equality}}
For $0<x<\theta$, $u_{\theta}(x)=\frac{1}{(\theta-x)}-\frac{2}{\theta}=\frac{2x-\theta}{\theta(\theta-x)}$. Taking $1-\frac{x}{\theta}=y$, we get
\begin{align*}
    \int_{0}^{\theta}p_{\theta}(x)^{1+\alpha}dx&=\frac{2^{1+\alpha}}{\theta^{\alpha}}\left[\frac{y^{2+\alpha}}{(2+\alpha)}\right]_0^1,\quad\alpha>-2\\
    &=\frac{2^{1+\alpha}}{(2+\alpha)\theta^{\alpha}},\quad\alpha>-2.
\end{align*}
\begin{align*}
    \int_0^{\theta}p_{\theta}(x)^{1+\alpha}u_{\theta}(x)^2dx&=\int_0^1\left(\frac{2}{\theta}y\right)^{1+\alpha}\left(\frac{1-2y}{\theta y}\right)^2\theta dy\\
    &=\frac{2^{1+\alpha}}{\theta^{2+\alpha}}\int_0^1\left(y^{\alpha-1}-4y^{\alpha}+4y^{\alpha+1}\right)dy\\
    &=\frac{2^{1+\alpha}}{\theta^{2+\alpha}}\left[\frac{y^{\alpha}}{\alpha}-4\frac{y^{\alpha+1}}{\alpha+1}+4\frac{y^{\alpha+2}}{\alpha+2}\right]_0^1,\quad \alpha>0\\
    &=\frac{2^{\alpha+1}}{\theta^{\alpha+2}}\left\{\frac{\alpha^2-\alpha+2}{\alpha(\alpha+1)(\alpha+2)}\right\},\quad\alpha>0.
\end{align*}
\begin{align*}
     \int_0^{\theta}p_{\theta}(x)^{1+\alpha}u_{\theta}(x)dx&=\int_0^1\left(\frac{2}{\theta}y\right)^{1+\alpha}\frac{(1-2y)}{\theta y}\theta dy\\
     &=\frac{2^{1+\alpha}}{\theta^{1+\alpha}}\int_0^1(y^{\alpha}-2y^{\alpha+1})dy\\
     &=\frac{2^{1+\alpha}}{\theta^{1+\alpha}}\left[\frac{y^{\alpha+1}}{\alpha+1}-2\frac{y^{\alpha+2}}{\alpha+2}\right]_0^1,\quad\alpha>-1\\
     &=\frac{2^{\alpha+1}}{\theta^{\alpha+1}}\left[\frac{-\alpha}{(\alpha+1)(\alpha+2)}\right],\quad\alpha>-1.
\end{align*}
Hence
\begin{align*}
    G_n^{(\alpha)}(\theta)&=\mathbb{E}_{\theta}\left[p_{\theta}(\mathrm{X})^{\alpha}u_{\theta}(\mathrm{X})^2\right]\\
    &=\int_0^{\theta}\cdots\int_0^{\theta}p_{\theta}(\mathrm{x})^{1+\alpha}\left(\sum_{i=1}^n u_{\theta}(x_i)\right)^2 d\mathrm{x}\\
    &=\int_0^{\theta}\cdots\int_0^{\theta}p_{\theta}(\mathrm{x})^{1+\alpha}\Big[\sum_{i=1}^n u_{\theta}(x_i)^2+\sum_{\substack{1 \le i,j \le n \\ i \neq j}}u_{\theta}(x_i)u_{\theta}(x_j)\Big]d\mathrm{x}\\
    &=\sum_{i=1}^n \int_0^{\theta}\cdots\int_0^{\theta}p_{\theta}(\mathrm{x})^{1+\alpha}u_{\theta}(x_i)^2 d\mathrm{x}+ \sum_{\substack{1 \le i,j \le n \\ i \neq j}}\int_0^{\theta}\cdots\int_0^{\theta}p_{\theta}(\mathrm{x})^{1+\alpha}u_{\theta}(x_i)u_{\theta}(x_j)d\mathrm{x}\\
    &=n \left[\frac{2^{\alpha+1}}{(2+\alpha)\theta^{\alpha}}\right]^{n-1}\frac{2^{\alpha+1}}{\theta^{\alpha+2}}\left\{\frac{\alpha^2-\alpha+2}{\alpha(\alpha+1)(\alpha+2)}\right\}\\
    &\quad+n(n-1)\left[\frac{2^{\alpha+1}}{(2+\alpha)\theta^{\alpha}}\right]^{n-2}\left\{\frac{2^{\alpha+1}}{\theta^{\alpha+1}}\left[\frac{-\alpha}{(\alpha+1)(\alpha+2)}\right]\right\}^{2},\quad \alpha>0\\
    &=n\left\{\frac{2^{\alpha+1}}{\theta^{\alpha}(2+\alpha)}\right\}^{n-2}\frac{2^{2\alpha+2}}{\theta^{2\alpha+2}(\alpha+1)(\alpha+2)^2}\left\{\frac{\alpha^2-\alpha+2}{\alpha}+\frac{(n-1)\alpha^2}{\alpha+1}\right\},\quad \alpha>0\\
    &=\frac{n2^{(\alpha+1)n}(n\alpha^3+\alpha+2)}{\alpha(\alpha+1)^2(\alpha+2)^n\theta^{\alpha n+2}},\quad \alpha>0.
\end{align*} 
Note that as $\alpha\to0$, $G_n^{(\alpha)}(\theta)$ diverges.\\ \\
Observe that
\begin{align*}
p_{\theta}(x)^{1-\alpha}&=\frac{2^{1-\alpha}(\theta-x)^{1-\alpha}}{\theta^{2(1-\alpha)}},\\
    \int_0^{\theta}p_{\theta}(x)^{1-\alpha}dx
    &=\frac{2^{1-\alpha}}{\theta^{-\alpha}}\left[\frac{y^{2-\alpha}}{2-\alpha}\right]_0^1,\;\alpha<2\\
    &=\frac{2^{1-\alpha}}{(2-\alpha)\theta^{-\alpha}},\;\alpha<2.
\end{align*}
Note that for $\alpha\geq 2$, $  \int_0^{\theta}p_{\theta}(x)^{1-\alpha}dx$ diverges, as a result $p_{\alpha,\theta}$ is not a valid pdf.\\
Hence, for $\alpha < 2$, we have
\begin{equation}\label{eqn:alpha_escort_example}
p_{\alpha,\theta}(x)
=
\begin{cases}
\displaystyle \frac{(2-\alpha)(\theta - x)^{1-\alpha}}{\theta^{2-\alpha}} &\quad\text{for  } 0 \leq x < \theta,\\
0 & \quad\text{otherwise}.
\end{cases}
\end{equation}
Hence
\begin{equation*}
    \mathbb{E}_{\alpha,\theta}[X]=\frac{(2-\alpha)}{\theta^{2-\alpha}}\int_0^{\theta}x(\theta-x)^{1-\alpha}dx
\end{equation*} and 
\begin{equation*}
     \mathbb{E}_{\alpha,\theta}[X^2]=\frac{(2-\alpha)}{\theta^{2-\alpha}}\int_0^{\theta}x^2(\theta-x)^{1-\alpha}dx.
\end{equation*}
Observe that, by taking $x=\theta t$, we get 
\begin{align*}
&\int_0^{\theta}x(\theta-x)^{1-\alpha}dx=\int_0^1t\theta^{3-\alpha}(1-t)^{1-\alpha}dt\\
&=\theta^{3-\alpha}\int_0^1t(1-t)^{1-\alpha}dt\\
&=\theta^{3-\alpha}\int_0^1 \{1-(1-t)\}(1-t)^{1-\alpha}dt\\
&=\theta^{3-\alpha}\left[-\int_0^1 (1-t)^{1-\alpha}d(1-t)+\int_0^1 (1-t)^{2-\alpha}d(1-t)\right]\\
&=\theta^{3-\alpha}\left[-\frac{(1-t)^{2-\alpha}}{(2-\alpha)}+\frac{(1-t)^{3-\alpha}}{3-\alpha}\right]_0^1,\quad \alpha<2 \\
&=\theta^{3-\alpha}\left[\frac{1}{2-\alpha}-\frac{1}{3-\alpha}\right]=\frac{\theta^{3-\alpha}}{(2-\alpha)(3-\alpha)},\quad \alpha<2.
\end{align*}
\begin{align*}
    &\int_0^{\theta}x^2(\theta-x)^{1-\alpha}dx=\int_0^1t^2\theta^{4-\alpha}(1-t)^{1-\alpha}dt\\
    &=\theta^{4-\alpha}\int_0^1(1-2(1-t)+(1-t)^2)(1-t)^{1-\alpha}dt\\
    &=\theta^{4-\alpha}\left\{\int_0^1(1-t)^{1-\alpha}dt-2\int_0^1 (1-t)^{2-\alpha}dt+\int_0^1 (1-t)^{3-\alpha}dt\right\}\\
    &=\theta^{4-\alpha}\left\{-\int_0^1(1-t)^{1-\alpha}d(1-t)+2\int_0^1 (1-t)^{2-\alpha}d(1-t)-\int_0^1 (1-t)^{3-\alpha}d(1-t)\right\}\\
    &=\theta^{4-\alpha}\left[-\frac{(1-t)^{2-\alpha}}{2-\alpha}+2\frac{(1-t)^{3-\alpha}}{3-\alpha}-\frac{(1-t)^{4-\alpha}}{4-\alpha}\right]_0^1,\quad \alpha<2\\
    &=\theta^{4-\alpha}\left[\frac{1}{2-\alpha}-\frac{2}{3-\alpha}+\frac{1}{4-\alpha}\right],\quad \alpha<2\\
    &=\theta^{4-\alpha}\frac{2}{(2-\alpha)(3-\alpha)(4-\alpha)},\quad \alpha<2.
\end{align*}
Thus, $\mathbb{E}_{\alpha,\theta}[X]=\frac{\theta}{(3-\alpha)}$ for $\alpha<2$ and $\mathbb{E}_{\alpha,\theta}[X^2]=\frac{2\theta^2}{(3-\alpha)(4-\alpha)}$ for $\alpha<2$.
Hence \begin{align*}
    V_{\alpha,\theta}[3\overline{X}]=\frac{9}{n}V_{\alpha,\theta}[X]=\frac{9}{n}\frac{(2-\alpha)\theta^2}{(3-\alpha)^2(4-\alpha)},\quad \alpha<2.
\end{align*}
Thus, for $0<\alpha<2$, 
L.H.S. of $\alpha$-CRLB 
\[
 V_{\alpha,\theta}[3\overline{X}]=\frac{9}{n}\frac{(2-\alpha)\theta^2}{(3-\alpha)^2(4-\alpha)}\eqqcolon A_n(\theta)
 \]
and R.H.S. of $\alpha$-CRLB 
\[
\frac{1}{\int_0^{\theta}\cdots\int_0^{\theta}p_{\theta}(\mathrm{x})^{1-\alpha}d\mathrm{x}}[G_n^{(\alpha)}(\theta)]^{-1}=\frac{(2-\alpha)^n\alpha(\alpha+1)^2(\alpha+2)\theta^2}{2^{2n}n(n\alpha^3+\alpha+2)}\eqqcolon B_n(\theta).
\]
Now, we will prove $A_n(\theta)\geq B_n(\theta)$. Since $0<\alpha<2$, $0<\frac{2-\alpha}{4}<1$. This implies $0<\frac{(2-\alpha)^n}{4^n}\leq \frac{2-\alpha}{4}<1$ and $\alpha^3+\alpha+2\leq n\alpha^3+\alpha+2$. Observe that
\begin{align*}
   B_n(\theta)= \frac{(2-\alpha)^n\alpha(\alpha+1)^2(\alpha+2)\theta^2}{2^{2n}n(n\alpha^3+\alpha+2)}&\leq \frac{(2-\alpha)\alpha(\alpha+1)^2(\alpha+2)\theta^2}{4n(n\alpha^3+\alpha+2)}\\ 
    &\leq \frac{(2-\alpha)\alpha(\alpha+1)^2(\alpha+2)\theta^2}{4n(\alpha^3+\alpha+2)}\\
    &=\frac{(2-\alpha)\alpha(\alpha+1)(\alpha+2)\theta^2}{4n(\alpha^2-\alpha+2)}.
\end{align*}
We now verify whether
$
A_n(\theta)\geq \frac{(2-\alpha)\alpha(\alpha+1)(\alpha+2)\theta^2}{4n(\alpha^2-\alpha+2)}
$
holds. That is,
\begin{align*}
    \frac{9}{(3-\alpha)^2(4-\alpha)}\geq \frac{\alpha(\alpha+1)(\alpha+2)}{4(\alpha^2-\alpha+2)}.
\end{align*} That is, 
\begin{align*}
   f(\alpha)\coloneqq  36(\alpha^2-\alpha+2)-\alpha(\alpha+1)(\alpha+2)(3-\alpha)^2(4-\alpha)\geq0.
\end{align*}
Note that $f(\alpha)=(\alpha-1)^2(\alpha^4-5\alpha^3-6\alpha^2+36\alpha+72).$
Since $0<\alpha<2$, therefore we have $(\alpha-1)^2\geq 0$, $\alpha^4> 0$, $-5\alpha^3>-5\cdot 2^3=-40$, $-6\alpha^2> -6\cdot 2^2=-24$ and $36\alpha>0$. Thus,
\begin{align*}
   \alpha^4-5\alpha^3-6\alpha^2+36\alpha+72>0-40-24-0+72=8.
\end{align*}
Hence $f(\alpha)\geq0$. 

\section*{F. Details of Proposition~\ref{prop:canonical_phi_family}}
 Since $\lambda$ is continuously differentiable and $R(\theta)=\frac{\partial \lambda}{\partial\theta}$ is invertible, from Inverse function theorem, we get $\left[\frac{\partial \lambda}{\partial\theta}\right]^{-1}=\frac{\partial\theta}{\partial\lambda}$.
\\
From \eqref{canonical_phi_family}, we get,
\begin{equation}
      \frac{\partial}{\partial \lambda}\varphi'(p_{\lambda}(\mathrm{x}))=   \frac{\partial H(\lambda)}{\partial \lambda}+\hat{r}(\mathrm{x}).
\end{equation} This gives
\begin{equation*}
    \mathbb{E}_{\varphi,\lambda}[\hat{r}(\mathrm{X})]=-\frac{\partial H(\lambda)}{\partial\lambda}.
\end{equation*}
Observe that
\begin{align*}
    G_n^{(\varphi)}(\lambda)&=\mathbb{E}_{\lambda}\left[\frac{\partial}{\partial\lambda}\varphi'(p_{\lambda}(\mathrm{X}))\frac{\partial}{\partial\lambda}\log p_{\lambda}(\mathrm{X})^{\top}\right]\\
    &=\int\left\{\frac{\partial H(\lambda)}{\partial \lambda}+\hat{r}(\mathrm{x})\right\}\frac{\partial}{\partial \lambda}p_{\lambda}(\mathrm{x})^{\top}d\mu(\mathrm{x})\\
    &=\frac{\partial}{\partial\lambda}\mathbb{E}_{\lambda}[\hat{r}(\mathrm{X})].
\end{align*}
\begin{align*}
    V_{\varphi,\lambda}[\hat{r}(\mathrm{X})]&=\mathbb{E}_{\varphi,\lambda}[(\hat{r}(\mathrm{X})-\mathbb{E}_{\varphi,\lambda}[\hat{r}(\mathrm{X})])(\hat{r}(\mathrm{X})-\mathbb{E}_{\varphi,\lambda}[\hat{r}(\mathrm{X})])^{\top}]\\
    &=\mathbb{E}_{\varphi,\lambda}\left[\left(\hat{r}(\mathrm{X})+\frac{\partial H(\lambda)}{\partial \lambda}\right)\left(\hat{r}(\mathrm{X})+\frac{\partial H(\lambda)}{\partial \lambda}\right)^{\top}\right]\\
    &=\int \left(\hat{r}(\mathrm{X})+\frac{\partial H(\lambda)}{\partial \lambda}\right)\frac{\partial}{\partial\lambda}\varphi'(p_{\lambda}(\mathrm{x}))^{\top}p_{\varphi,\lambda}(\mathrm{x})d\mu(\mathrm{x})\\
    &=\int \left(\hat{r}(\mathrm{X})+\frac{\partial H(\lambda)}{\partial \lambda}\right)\frac{\partial}{\partial\lambda}p_{\lambda}(\mathrm{x})^{\top}\frac{1}{\int [\varphi''(p_{\lambda}(\mathrm{y}))]^{-1}d\mu(\mathrm{y})}d\mu(\mathrm{x})\\
    &=\frac{1}{\int [\varphi''(p_{\lambda}(\mathrm{y}))]^{-1}d\mu(\mathrm{y})}\frac{\partial}{\partial\lambda}\mathbb{E}_{\lambda}[\hat{r}(\mathrm{X})].
\end{align*}
Now,
\begin{align}
      V_{\varphi,\lambda}[\hat{r}(\mathrm{X})]&=\frac{1}{\int [\varphi''(p_{\lambda}(\mathrm{y}))]^{-1}d\mu(\mathrm{y})}\frac{\partial}{\partial\lambda}\mathbb{E}_{\lambda}[\hat{r}(\mathrm{X})][(G_n^{(\varphi)}(\lambda))]^{-1}[G_n^{(\varphi)}(\lambda)]^{\top}\nonumber\\
      &=\frac{1}{\int [\varphi''(p_{\lambda}(\mathrm{y}))]^{-1}d\mu(\mathrm{y})}\frac{\partial}{\partial\lambda}\mathbb{E}_{\lambda}[\hat{r}(\mathrm{X})][G_n^{(\varphi)}(\lambda)]^{-1}\left(\frac{\partial}{\partial\lambda}\mathbb{E}_{\lambda}[\hat{r}(\mathrm{X})]\right)^{\top}.\label{eqn:m1}
\end{align}
Note that 
\begin{align*}
    G_n^{(\varphi)}(\lambda)&=\mathbb{E}_{\lambda}\left[\frac{\partial}{\partial\lambda}\varphi'(p_{\lambda}(\mathrm{X}))\frac{\partial}{\partial\lambda}\log p_{\lambda}(\mathrm
X)^{\top}\right]\\
&=\mathbb{E}_{\lambda}\left[\frac{\partial\theta}{\partial\lambda}^{\top}\frac{\partial}{\partial\theta}\varphi'(p_{\theta}(\mathrm{X}))\frac{\partial}{\partial\theta}\log p_{\theta}(\mathrm
X)^{\top}\frac{\partial\theta}{\partial\lambda}\right]\\
&=\frac{\partial\theta}{\partial\lambda}^{\top}\mathbb{E}_{\lambda}\left[\frac{\partial}{\partial\theta}\varphi'(p_{\theta}(\mathrm{X}))\frac{\partial}{\partial\theta}\log p_{\theta}(\mathrm
X)^{\top}\right]\frac{\partial\theta}{\partial\lambda}\\
&=\left[\left(\frac{\partial \lambda}{\partial\theta}\right)^{\top}\right]^{-1}\mathbb{E}_{\theta}\left[\frac{\partial}{\partial\theta}\varphi'(p_{\theta}(\mathrm{X}))\frac{\partial}{\partial\theta}\log p_{\theta}(\mathrm
X)^{\top}\right] \left[\frac{\partial \lambda}{\partial\theta}\right]^{-1}.
\end{align*}
From \eqref{eqn:m1}, we get,
\begin{align*}
    V_{\varphi,\lambda}[\hat{r}(\mathrm{X})]=\frac{1}{\int [\varphi''(p_{\theta}(\mathrm{y}))]^{-1}d\mu(\mathrm{y})}\frac{\partial}{\partial\theta}\mathbb{E}_{\theta}[\hat{r}(\mathrm{X})][G_n^{(\varphi)}(\theta)]^{-1}\frac{\partial}{\partial\theta}\mathbb{E}_{\theta}[\hat{r}(\mathrm{X})]^{\top}.
\end{align*}
\end{document}